\def\<{\langle}
\def\>{\rangle}
\let\ipscriptstyle=\scriptscriptstyle
\def\lipsqueeze{{\mskip -3.0mu}}
\def\ripsqueeze{{\mskip -3.0mu}}
\def\ipcomma{\nobreak\mathrel{,}\nobreak}
\newbox\ipstrutbox
\def\ipstrut{\copy\ipstrutbox}
\def\lip#1<#2,#3>{\mathopen{\relax_{\ipstrut\ipscriptstyle{
#1}}\lipsqueeze
\langle} #2\ipcomma #3 \rangle}
\def\blip#1<#2,#3>{\mathopen{\relax_{\ipstrut
\ipscriptstyle{ #1}}\lipsqueeze\bigl\langle} #2\ipcomma #3 \bigr\rangle}
\def\rip#1<#2,#3>{\langle #2\ipcomma #3
\rangle_{\ripsqueeze\ipstrut\ipscriptstyle{#1}}}
\def\brip#1<#2,#3>{\bigl\langle #2\ipcomma #3
\bigr\rangle_{\ripsqueeze\ipstrut\ipscriptstyle{#1}}}
\def\angsqueeze{\mskip -6mu}
\def\smangsqueeze{\mskip -3.7mu}
\def\trip#1<#2,#3>{\langle\smangsqueeze\langle #2\ipcomma #3
\rangle\smangsqueeze\rangle_{\ripsqueeze\ipstrut\ipscriptstyle{#1}}}
\def\btrip#1<#2,#3>{\bigl\langle\angsqueeze\bigl\langle #2\ipcomma
#3
\bigr\rangle
\angsqueeze\bigr\rangle_{\ripsqueeze\ipstrut\ipscriptstyle{#1}}}
\def\tlip#1<#2,#3>{\mathopen{\relax_{\ipstrut\ipscriptstyle{
#1}}\lipsqueeze \langle\smangsqueeze\langle} #2\ipcomma #3
\rangle\smangsqueeze\rangle}
\def\btlip#1<#2,#3>{\mathopen{\relax_{\ipstrut\ipscriptstyle{
#1}}\lipsqueeze
\bigl\langle\angsqueeze\bigl\langle} #2\ipcomma #3
\bigr\rangle\angsqueeze\bigr\rangle}
\def\ip(#1|#2){(#1\mid #2)}
\def\bip(#1|#2){\bigl(#1 \mid #2\bigr)}
\def\Bip(#1|#2){\Bigl( #1 \bigm| #2 \Bigr)}
\newtheorem{theorem}{Theorem}[section]
\newtheorem{thm}[theorem]{Theorem}
\newtheorem{lemma}[theorem]{Lemma}
\newtheorem{prop}[theorem]{Proposition}
\newtheorem{cor}[theorem]{Corollary}
\theoremstyle{remark}
\newtheorem{remark}[theorem]{Remark}
\newtheorem*{problem*}{Problem}
\newtheorem*{remark*}{Remark}
\newtheorem*{convention*}{Convention}
\newtheorem*{notation*}{Notation}
\newtheorem*{examples*}{Examples}
\newtheorem*{example*}{Example}
\newtheorem*{warning*}{Warning}
\numberwithin{equation}{section}
\def\DD{{\mathcal D}}
\def\H{{\mathcal H}}
\def\K{{\mathcal K}}
\def\L{{\mathcal L}}
\def\Inf{\operatorname{Inf}}
\newcommand{\Rie}{\operatorname{{Rie}}}
\newcommand{\nd}{\textup{nd}}
\newcommand{\Fix}{\operatorname{\mathtt{Fix}}}
\newcommand{\RCP}{\operatorname{\mathtt{RCP}}}
\newcommand{\CP}{\operatorname{\mathtt{CP}}}
\newcommand{\CC}{\operatorname{\mathtt{C*}}}
\newcommand{\CCnd}{\operatorname{\CC_{\!\!\!\!\nd}}}
\newcommand{\Aa}{\operatorname{\mathtt{C*act}}}
\newcommand{\Aca}{\operatorname{\mathtt{C*coact}}}
\newcommand{\nor}{\textup{n}}
\newcommand\tensor{\otimes}
\newcommand{\op}{{\operatorname{op}}}
\newcommand{\lt}{\operatorname{lt}}
\newcommand{\rt}{\textup{rt}}
\newcommand{\id}{\operatorname{id}}
\newcommand{\Ad}{\operatorname{Ad}}
\newcommand{\Ind}{\operatorname{Ind}}
\def\dashind{\operatorname{\!-Ind}}
\newcommand{\Aut}{\operatorname{Aut}}
\newcommand{\supp}{\operatorname{supp}}
\newcommand{\lsp}{\operatorname{span}}
\newcommand{\clsp}{\overline{\operatorname{span}}}
\def\dashind{\operatorname{\!-Ind}}
\def\LHS{\textup{LHS}}
\def\RHS{\textup{RHS}}
\def\Rep{\operatorname{Rep}}
\def\Res{\operatorname{Res}}
\def\dec{\textup{dec}}
\newcommand{\ib}{im\-prim\-i\-tiv\-ity bi\-mod\-u\-le}
\begin{document}

\title[Fixed-point algebras]{Fixed-point algebras for proper
  actions\\ and crossed products by homogeneous spaces}

\author[an Huef]{Astrid an Huef}
\address{School of Mathematics and Statistics\\
The University of New South Wales\\Sydney\\
NSW 2052\\
Australia}
\email{astrid@unsw.edu.au}

\author[Kaliszewski]{S. Kaliszewski}
\address{Department of Mathematical and Statistical Sciences\\Arizona
State University\\Tempe\\ AZ
85287-1804\\USA} \email{kaliszewski@asu.edu}

\author[Raeburn]{Iain Raeburn}
\address{School  of Mathematics and Applied Statistics, University of
Wollongong, NSW 2522, Australia}
\email{raeburn@uow.edu.au}

\author[Williams]{Dana P. Williams}
\address{Department of Mathematics\\Dartmouth College\\
Hanover, NH 03755\\USA}
\email{dana.williams@dartmouth.edu}

\begin{abstract}
We consider a fixed free and proper action of a locally compact group $G$ on a space $T$, and actions $\alpha:G\to \Aut A$ on $C^*$-algebras for which there is an equivariant embedding of $(C_0(T),\rt)$ in $(M(A),\alpha)$. A recent theorem of Rieffel implies that $\alpha$ is proper and saturated with respect to the subalgebra $C_0(T)AC_0(T)$ of $A$, so that his general theory of proper actions gives a Morita equivalence between $A\rtimes_{\alpha,r}G$ and a generalised fixed-point algebra $A^\alpha$. Here we investigate the functor $(A,\alpha)\mapsto A^\alpha$ and the naturality of Rieffel's Morita equivalence, focusing in particular on the relationship between the different functors associated to subgroups and quotients. We then use the results to study  induced representations for crossed products by coactions of homogeneous spaces $G/H$ of $G$, which  were previously shown by an Huef and Raeburn to be fixed-point algebras for the dual action of $H$ on the crossed product by $G$. 
\end{abstract}

\date{July 2, 2009}

\thanks{This research was supported by the Australian Research Council
  and the Edward Shapiro fund at Dartmouth College.}

\maketitle

\section{Introduction} 
Suppose that a locally compact group $G$ acts freely and properly on the right of a locally compact space $T$, and $\rt$ is the induced action of $G$ on $C_0(T)$. Let $\alpha:G\to \Aut A$ be an action of $G$ on a $C^*$-algebra $A$.   Rieffel proved in \cite[Theorem~5.7]{integrable} that if there is a nondegenerate homomorphism $\phi:C_0(T)\to M(A)$ such that $\alpha_s\circ\phi=\phi\circ \rt_s$, then $\alpha$ is proper and saturated in the sense of \cite{proper} with respect to the subalgebra $A_0:=\phi(C_c(T))A\phi(C_c(T))$ of $A$.  The general theory of \cite{proper} then implies that the reduced crossed product $A\rtimes_{\alpha, r} G$ is Morita equivalent via a bimodule $Z(A,\alpha,\phi)$ to a generalised fixed-point algebra $A^\alpha$ sitting in the multiplier algebra $M(A)$ of $A$.

 Theorem~5.7 of \cite{integrable} covers all the main examples of proper actions, such as the dual actions on crossed products by coactions and the actions on graph algebras induced by free actions on graphs  (see \cite[Remark~4.5]{aHRWproper2}), and this has opened up new applications of the theory in \cite{proper}. It was used in \cite{hrman}, for example,  to show that if $\delta$ is a coaction of $G$ on $B$ and $H$ is a closed subgroup of $G$, then the dual action $\hat\delta$ of $H$ on the crossed product $B\rtimes_\delta G$ is proper  and saturated with fixed-point algebra the crossed product $B\rtimes_{\delta}(G/H)$ by the homogeneous space. The resulting Morita equivalence of $(B\rtimes_{\delta} G)\rtimes_{\hat\delta} H$ with $B\rtimes_{\delta}(G/H)$ extends Mansfield's imprimitivity theorem for crossed products by coactions to arbitary closed subgroups (as opposed to the normal subgroups in \cite{man} and \cite{kqman}) \cite[Theorem~3.1]{hrman}.  More generally, the identification of the fixed-point algebra as a crossed product by a homogeneous space promises to give us useful leverage for studying this previously intractable family of crossed products.

The results in \cite{proper} depend on the existence of a dense invariant subalgebra $A_0$, and the Morita equivalences obtained there depend on the choice of $A_0$. Theorem~5.7 of \cite{integrable}, on the other hand, says that in the presence of the homomorphism $\phi$, there is a canonical choice $\phi(C_c(T))A\phi(C_c(T))$ for $A_0$. This makes it possible to ask questions about the functoriality and naturality of the constructions in \cite{proper} on categories of triples $(A,\alpha,\phi)$. Such questions were answered in \cite{kqrproper} for a category which is particularly appropriate to Landstad duality for crossed products by coactions, and the general theory in \cite{kqrproper} has  interesting implications for nonabelian duality for crossed products. The success of this program prompted the present authors to investigate the naturality of Rieffel's Morita equivalence in a category modelled on those in \cite{enchilada}, where the isomorphisms are given by Morita equivalences \cite{aHKRWproper-n}. The main result of \cite{aHKRWproper-n} says that Rieffel's Morita equivalences implement a natural isomorphism between a crossed-product functor and a functor $\Fix$ which sends $(A,\alpha,\phi)$ to $\Fix(A,\alpha,\phi):=A^\alpha$.  This implies in particular that the Morita equivalence of \cite{hrman} is a natural isomorphism in an appropriate category \cite[Theorem~5.6]{aHKRWproper-n}. 

In this paper, we investigate applications of Rieffel's construction to crossed products by coactions of homomogeneous spaces.  Our ultimate goal is to study  and in particular to construct and understand induced representations of crossed products by homogeneous spaces. Here we construct an induction process based on the bimodules implementing Rieffel's equivalence, and investigate its properties. Our main theorem establishes induction-in-stages for induced representations of crossed products by homogeneous spaces. As in \cite{aHKRWproper-n}, we work as far as possible in the ``semi-comma category'' associated to a free and proper action of $G$ on a space $T$, and obtain our results for crossed products by homogeneous spaces by applying the general theory to the pairs $(T,G)=(G,H)$ associated to closed subgroups $H$ of~$G$.

We begin by reviewing the definition and properties of the semi-comma category, Rieffel's proper actions, and the basics of coactions and their crossed products. We also prove a key property of the inclusion of $\Fix(A,\alpha,\phi)$ in $M(A)$ which was used implicitly at a key point in \cite[Theorem~2.6]{kqrproper}. In \S\ref{sec:fixandGreen}, we discuss the relationship between fixed-point algebras and Green induction of representations for crossed products by actions. Theorem~\ref{thm-extension} says that Green induction from $H$ to $G$ is dual to the restriction map associated to an inclusion of fixed-point algebras, which immediately implies an induction-restriction result for crossed products by homogeneous spaces associated to an arbitrary pair of closed subgroups $H\subset K$ (Corollary~\ref{prop-prob2homog}). It also implies that the Green bimodules define a natural transformation between reduced-crossed-product functors (Theorem~\ref{thm-green}).

In \S\ref{sec:abstractfis}, we discuss the general version of induction-in-stages. This has two main ingredients: we need to extend the functor $\Fix$ to an equivariant version, and then prove what we call ``fixing-in-stages":  fixing first over a normal subgroup and then over the quotient is naturally isomorphic to fixing once over the whole group (Theorem~\ref{thm-newfix}). Once we have this, it makes sense to prove that the construction of Rieffel's Morita equivalence can also be done in stages (Theorem~\ref{cor-easyiis}). Next, in \S\ref{sec:plums}, we produce an equivariant version of the natural equivalence of \cite[Theorem~3.5]{aHKRWproper-n}.

The last two sections contain our main applications to crossed products by coactions. In \S\ref{sec:nat}, we use our results on fixing-in-stages to prove a decomposition theorem for crossed products by homogeneous spaces (Theorem~\ref{newMe}). Unfortunately, because we only have the results of \S\ref{sec:abstractfis} for normal subgroups, we need to restrict attention to pairs of subgroups $H\subset K$ for which $H$ is normal in $K$. However, we do not need to assume that $H$ and $K$ are normal in $G$, and hence our results represent a substantial step forward. We stress that, while our Theorem~\ref{newMe} constructs a natural isomorphism, in this case the existence of the isomorphism (which in our category is really  a Morita equivalence) is itself new. In our last section, we discuss induction of representations from one crossed product by a homogeneous space to another, and in particular prove induction-in-stages.

\section{Preliminaries}

\subsection{\boldmath{Categories of $C^*$-algebras}} We are interested in several categories of $C^*$-algebras, and we have tried to stick to the conventions of \cite{aHKRWproper-n}. In the category $\CC$, the objects are $C^*$-algebras and the morphisms from $A$ to $B$ are isomorphism classes of right-Hilbert bimodules ${}_AX_B$ (where, as in \cite{aHKRWproper-n}, we always assume that $X$ is full as a Hilbert $B$-module and that the left action of $A$ is given by a nondegenerate homomorphism $\kappa:A\to \L(X)$). We denote the category used in \cite{kqrproper}, in which the morphisms from $A$ to $B$ are nondegenerate homomorphisms $\sigma:A\to M(B)$, by $\CCnd$. As in \cite{aHKRWproper-n}, we are interested in functors defined on the category $\Aa(G)$, in which the objects are dynamical systems $(A,\alpha)$ consisting of an action $\alpha$ of a fixed locally compact group $G$ on a $C^*$-algebra $A$ and the morphisms are suitably equivariant right-Hilbert bimodules. 

Now suppose that $G$ acts freely and properly on the right of a locally compact space $T$, and $(C_0(T),\rt)$ is the corresponding object in $\Aa(G)$. The objects in both the semi-comma category $\Aa(G, (C_0(T),\rt))$ of \cite{aHKRWproper-n} and the comma category $(C_0(T),\rt)\downarrow\Aa_{\nd}(G)$ of \cite{kqrproper} consist of an object $(A,\alpha)$ in $\Aa(G)$ together with a nondegenerate homomorphism $\phi:C_0(T)\to M(A)$ which is $\rt$--$\alpha$ equivariant. In $\Aa(G, (C_0(T),\rt))$, however, the morphisms from $(A,\alpha,\phi)$ to $(B,\beta,\psi)$ are just the morphisms from $(A,\alpha)$ to $(B,\beta)$ in $\Aa(G)$ (ignoring $\phi$ and $\psi$), whereas in $(C_0(T),\rt)\downarrow\Aa_{\nd}(G)$, they are nondegenerate homomorphisms $\sigma:A\to M(B)$ such that $\psi=\sigma\circ\phi$.  Our reasons for choosing to work in the semi-comma category are explained in \cite[\S2]{aHKRWproper-n}. Of crucial importance, in \cite{enchilada}, in \cite{aHKRWproper-n}, and here, is Proposition~2.1 of \cite{aHKRWproper-n}, which says that every morphism $[{}_AX_B]$ in $\Aa(G, (C_0(T),\rt))$ is the composition of an isomorphism (that is, a morphism coming from an imprimitivity bimodule) and a morphism coming from a nondegenerate homomorphism $\kappa:A\to M(\K(X))=\L(X)$.

\subsection{Fixing} Suppose that $G$ acts freely and properly on $T$. We consider an object $(A,\alpha,\phi)$ in $\Aa(G, (C_0(T),\rt))$, and the subalgebra $A_0:=\lsp\{\phi(f)a\phi(g):a\in A,\ f,g\in C_c(T)\}$ of $A$. As in \cite{aHKRWproper-n,kqrproper}, we simplify the notation by dropping the $\phi$ from our notation when no ambiguity seems possible, and by writing, for example, $XB$ for $\lsp\{x\cdot b:x\in X,\ b\in B\}$. Thus
\[
A_0=C_c(T)AC_c(T)=\lsp\{fag:a\in A,\ f,g\in C_c(T)\}.
\]
It was shown in \cite[\S2]{kqrproper}, using a theory developed by
Olesen-Pedersen \cite{OP1, OP2} and Quigg \cite{quigg, QR}, that for
every $a\in A_0$, there exists $E(a)\in M(A)$ such that
\begin{equation*}
  \omega(E(a))=\int_G \omega(\alpha_s(a))\,ds\ \text{ for $\omega\in A^*$;}
\end{equation*}
we will write $E^G$ or $E^\alpha$ for $E$ if we want to emphasise the particular group or action.
The range $E(A_0)$ of $E$ is a $*$-subalgebra of $M(A)^\alpha$, and $\Fix(A,\alpha,\phi)$ is by definition the norm closure $\overline{E(A_0)}$ in $M(A)$. We showed in \cite[Theorem~3.3]{aHKRWproper-n} that $\Fix$ extends to a functor from the semi-comma category $\Aa(G, (C_0(T),\rt))$ to $\CC$. 

Rieffel's theorem (Theorem~5.7 of \cite{integrable}) implies that the action $\alpha$ is proper and saturated with respect to $A_0$, and so the theory of \cite{proper} gives us a Morita equivalence between the reduced crossed product $A\rtimes_{\alpha,r}G$ and a generalized-fixed point algebra $A^\alpha$ sitting in $M(A)$, implemented by an imprimitivity bimodule  $Z(A,\alpha,\phi)$. It was shown in \cite[Proposition~3.1]{kqrproper} that $\Fix(A,\alpha,\phi)$ coincides with the algebra $A^\alpha$ appearing in \cite{proper}. (We use the notation  $A^\alpha$ for $\Fix(A,\alpha,\phi)$ if we think it is obvious what $\phi$ is, and add a subscipt $\Fix_G$ if there is more than one group around.)  We proved in \cite[Theorem~3.5]{aHKRWproper-n} that the assignments $(A,\alpha,\phi)\mapsto Z(A,\alpha,\phi)$ implement a natural isomorphism between $\Fix$ and a reduced-crossed-product functor $\RCP:\Aa(G, (C_0(T),\rt))\to \CC$.

\subsection{Coactions} The main applications of our theory are to crossed products by coactions, and in particular to crossed products by coactions of homogeneous spaces. We have formulated some of these applications as corollaries to our more general results, so it seems useful to set out our conventions at the start.

As in \cite{aHKRWproper-n}, we work exclusively with the normal nondegenerate coactions introduced in \cite{quigg-fr}  and discussed in \cite[Appendix~A]{enchilada}; we explained in \cite[\S5]{aHKRW-JOT} why normal coactions seem to be particularly appropriate when dealing with homogeneous spaces. We use the category $\Aca^\nor(G)$ in which the objects $(B,\delta)$ are normal coactions $\delta:B\to M(B\otimes C^*(G))$, and the morphisms from $(B,\delta)$ to $(C,\epsilon)$ are isomorphism classes $[X,\Delta]$ of right-Hilbert $B$\,--\,$C$ bimodules $X$ carrying a $\delta$\,--\,$\epsilon$ compatible coaction $\Delta$ (see Theorem~2.15 of \cite{enchilada}). We denote the crossed product of $(B,\delta)$ by $(B\rtimes_\delta G,j_B,j_G)$, and then Theorem~3.13 of \cite{enchilada} says that the assignments
\[
(B,\delta)\mapsto B\rtimes_\delta G\quad\text{and}\quad[X,\Delta]\mapsto [X\rtimes_\Delta G]
\]
form a functor $\CP:\Aca^\nor(G)\to \CC$.  

The dual action $\hat\delta: G\to\Aut (B\rtimes_\delta G)$ is characterised by
\[
\hat\delta_t(j_B(b)j_G(f))=j_G(b)j_G(\rt_t(f))\ \text{ for $b\in B$ and $f\in C_0(G)$,}
\]
and $(B\rtimes_\delta G,\hat\delta,j_G)$ is an object in our semi-comma category $\Aa(G, (C_0(G),\rt))$. Now let $H$ be a closed subgroup of $G$. Theorem~3.13 of \cite{enchilada} implies that there is a functor
\[
\CP_H:\Aca^\nor(G)\to \Aa(H,(C_0(G),\rt))
\]
such that $(B,\delta)\mapsto (B\rtimes_\delta G,\hat\delta|H,j_G)$. Since the right action of $H$ is free and proper, we can apply the general theory of \cite{aHKRWproper-n}, and there is a fixed-point algebra $\Fix(B\rtimes_\delta G,\hat\delta|H,j_G)$. The discussion in \cite[\S6]{kqrproper} shows that this fixed-point algebra, which is a subalgebra of $M(B\rtimes_{\delta} G)$, coincides with the reduced crossed product by the homogeneous space $G/H$, which is by definition the closed span
\[
B\rtimes_{\delta,r} (G/H):=\clsp\{ j_B(b)j_G|(f):b\in B, f\in C_0(G/H) \}
\subset M(B\rtimes_{\delta} G),
\]
and is by \cite[Proposition~8]{man} a $C^*$-subalgebra of $M(B\rtimes_{\delta} G)$. We proved in \cite[Proposition~5.5]{aHKRWproper-n} that $(B,\delta)\mapsto B\rtimes_{\delta,r} (G/H)$ is the object map in a functor 
$\RCP_{G/H}:\Aca^\nor(G)\to \CC$, and that this functor coincides with $\Fix\circ \CP_H$.

\subsection{Identifications of multipliers}\label{sec-he-he}
Suppose that $G$ acts freely and properly on $T$, and   $(A,\alpha,\phi)$ is an object in the semi-comma category $\Aa(G,(C_0(T),\rt))$.

\begin{prop}\label{nondegeneracy}
The set $E(A_0)A_0$ is dense in $A_0$ in the $C^*$-norm of $A$. 
\end{prop}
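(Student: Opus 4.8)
The plan is to show that every element of $A_0$ lies in the $A$-norm closure of $E(A_0)A_0$. Since $A_0$ is spanned by elements of the form $c=\phi(f)a\phi(g)$ with $f,g\in C_c(T)$ and $a\in A$, and since a single averaging function can be chosen to work simultaneously for the finitely many $f_i$ appearing in a given element of $A_0$, it suffices to approximate one such $c$ at a time. For this I would produce elements $b_\lambda\in A_0$ with $E(b_\lambda)\,c\to c$ in the norm of $A$; then $c\in\overline{E(A_0)A_0}$ and we are done. The natural candidates are $b_\lambda=\phi(h)\,e_\lambda\,\phi(h)$, where $(e_\lambda)$ is a bounded approximate identity for $A$ and $h\in C_c(T)$ is a fixed nonnegative function to be chosen using properness; note $b_\lambda\in C_c(T)AC_c(T)=A_0$, so $E(b_\lambda)\in E(A_0)$.

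The key use of properness is the choice of $h$: because $G$ acts freely and properly on $T$, for the compact set $\supp f$ there is a nonnegative $h\in C_c(T)$ whose average $\Phi:=\int_G \rt_s(h^2)\,ds$ is a $G$-invariant continuous function equal to $1$ on $\supp f$ (so that $\Phi f=f$). Concretely one first finds $h_0\ge 0$ with $\int_G\rt_s(h_0^2)\,ds$ strictly positive on $\supp f$, and then divides by the (invariant) square root and multiplies by a cut-off pulled back from $T/G$; properness guarantees that these averages converge to continuous functions that are compactly supported modulo $G$. This is the standard averaging construction for proper actions, and establishing it cleanly is the main technical point.

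With $h$ fixed, equivariance of $\phi$ gives $\alpha_s(b_\lambda)=\phi(\rt_s h)\,\alpha_s(e_\lambda)\,\phi(\rt_s h)$, so that
\[
E(b_\lambda)\,c=\int_G \phi(\rt_s h)\,\alpha_s(e_\lambda)\,\phi\bigl((\rt_s h)f\bigr)\,a\,\phi(g)\,ds .
\]
The integrand vanishes unless $s$ lies in $P:=\{\,s\in G:(\supp f)s\cap\supp h\neq\emptyset\,\}$, which is compact by properness; hence the integral is an honest ($A$-valued) Bochner integral over $P$ and agrees with the weak definition of $E$ applied to $c$. Writing $y_s:=\phi\bigl((\rt_s h)f\bigr)\,a\,\phi(g)$, the set $\{\,\alpha_{s^{-1}}(y_s):s\in P\,\}$ is a compact subset of $A$, and approximate identities converge uniformly on compacta, so $\alpha_s(e_\lambda)\,y_s\to y_s$ uniformly in $s\in P$. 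Replacing $\alpha_s(e_\lambda)$ by $1$ in the integrand therefore produces a norm error tending to $0$ (the measure of $P$ being finite and $\|\phi(\rt_s h)\|$ bounded), and the resulting limit is
\[
\int_G \phi(\rt_s h)\,\phi\bigl((\rt_s h)f\bigr)\,a\,\phi(g)\,ds=\phi(\Phi f)\,a\,\phi(g)=\phi(f)\,a\,\phi(g)=c,
\]
using $\Phi f=f$. Thus $E(b_\lambda)c\to c$ in the norm of $A$, as required.

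I expect the two genuinely delicate points to be (i) the averaging lemma producing $h$ with $\int_G\rt_s(h^2)\,ds=1$ on $\supp f$, which is exactly where freeness and properness enter, and (ii) the bookkeeping that turns the weak-integral definition of $E$ into a compactly supported Bochner integral, so that the uniform-on-$P$ convergence of the approximate identity can be applied. Both are routine consequences of properness, but they deserve care; everything else is a direct manipulation of the defining formula for $E$ and the equivariance of $\phi$.
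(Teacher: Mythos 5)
Your proof is correct and takes essentially the same route as the paper's: both combine an averaging function for the proper action on $T$ (the paper's $g$ with $E^{\rt}(g)f=f$, your $h$ with $\int_G\rt_s(h^2)\,ds\equiv 1$ on $\supp f$) with an approximate unit (the paper's single element supplied by a routine compactness lemma, your net $\phi(h)e_\lambda\phi(h)$), then use properness to get a compactly supported integrand and collapse the integral to $E^{\rt}$ acting on $C_c(T)$-functions. The two technical points you flag are precisely the ones the paper disposes of by citation, namely Lemmas 2.2 and 2.7 of \cite{kqrproper}.
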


The proof uses a lemma, which follows from a routine compactness argument.

\begin{lemma}\label{routinecpct}
Suppose that $K$ is a compact subset of a $C^*$-algebra $A$ and $\epsilon>0$. Then there exists $a\in A_0$ such that $\|a\|\leq 1$ and $\|ab-b\|<\epsilon$ for all $b\in K$.
\end{lemma}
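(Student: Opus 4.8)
The plan is to reduce the statement to finitely many elements by a standard covering argument (this is the ``routine'' part), and to construct by hand a single element of $A_0$ that serves as an approximate left unit on that finite set. For the reduction: since $K$ is compact, cover it by finitely many balls $B(b_i,\epsilon/4)$ with $b_1,\dots,b_n\in K$, and suppose we have produced $a\in A_0$ with $\|a\|\le1$ and $\|ab_i-b_i\|<\epsilon/2$ for each $i$. Then for arbitrary $b\in K$, choosing $i$ with $\|b-b_i\|<\epsilon/4$ and using $\|a\|\le1$,
\[
\|ab-b\|\le\|a(b-b_i)\|+\|ab_i-b_i\|+\|b_i-b\|<\frac{\epsilon}{4}+\frac{\epsilon}{2}+\frac{\epsilon}{4}=\epsilon.
\]
So everything comes down to the finite case: given $b_1,\dots,b_n\in A$ and $\eta>0$, find $a\in A_0$ with $\|a\|\le1$ and $\|ab_j-b_j\|<\eta$ for all $j$.

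For the finite case I would use the concrete description $A_0=\phi(C_c(T))A\phi(C_c(T))$ together with the nondegeneracy of $\phi$. Since $\phi$ is nondegenerate, $A_0$ is dense in $A$, so I can first replace each $b_j$ by some $c_j\in A_0$ with $\|b_j-c_j\|<\eta/3$; by the triangle inequality it then suffices to make $a$ an $\eta/3$-approximate left unit for the $c_j$. Each $c_j$ is a finite sum of terms $\phi(f)x\phi(g)$ with $f,g\in C_c(T)$ and $x\in A$, so the union $L$ of the supports of the left-hand functions $f$ occurring is compact; choose $v\in C_c(T)$ with $0\le v\le1$ and $v\equiv1$ on $L$, so that $\phi(v)c_j=c_j$ for every $j$. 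Finally take an approximate identity $\{e_\lambda\}$ for $A$ with $0\le e_\lambda\le1$ and, using that there are only finitely many $c_j$, fix $\lambda$ with $\|e_\lambda c_j-c_j\|<\eta/3$ for all $j$.

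The element that does the job is the sandwich $a:=\phi(v)e_\lambda\phi(v)$. It lies in $A_0$ because it has the form $\phi(v)x\phi(v)$ with $x=e_\lambda\in A$, and $\|a\|\le\|\phi(v)\|^2\|e_\lambda\|\le1$ since $0\le v\le1$. Using $\phi(v)c_j=c_j$ gives $ac_j=\phi(v)e_\lambda c_j$ and hence $\|ac_j-c_j\|=\|\phi(v)(e_\lambda c_j-c_j)\|\le\|e_\lambda c_j-c_j\|<\eta/3$, which combined with $\|b_j-c_j\|<\eta/3$ yields $\|ab_j-b_j\|<\eta$. The only genuine subtlety — the one point where I expect to have to be careful rather than routine — is that membership in $A_0$ requires a factor of $\phi(C_c(T))$ on \emph{both} sides, so a one-sided near-unit such as $e_\lambda\phi(v)$ will not do; the remedy is precisely to sandwich by $\phi(v)$, which simultaneously keeps $a$ inside $A_0$ and preserves the bound $\|a\|\le1$. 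Once this is in hand, the passage back from the finite set to the compact set $K$ is the promised routine compactness argument.
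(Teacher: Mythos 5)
Your proof is correct and is exactly the ``routine compactness argument'' the paper alludes to without writing out: cover the compact set by finitely many $\epsilon/4$-balls, reduce to a finite set via $\|a\|\le 1$, and produce an approximate left unit in $A_0$ by sandwiching an approximate identity as $\phi(v)e_\lambda\phi(v)$, which simultaneously secures membership in $A_0$ and the norm bound. Since the paper supplies no details beyond that remark, your write-up simply fills them in correctly, including the genuinely necessary two-sided sandwich by $\phi(v)$.
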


\begin{proof}[Proof of Proposition~\ref{nondegeneracy}]
Let $b\in A_0$, and choose $f\in C_c(T)$ such that $b=fb$. Choose $g\in C_c(T)$ such that $E^{\rt}(g)f=f$ (see \cite[Lemma~2.7]{kqrproper}). Properness implies that $M:=\{s\in G:\rt_s(g)f\not=0\}$ is compact, and then we can apply Lemma~\ref{routinecpct} to $K:=\{\alpha_s^{-1}(\rt_s(g)fb):s\in M\}$. This gives $a\in A_0$ such that $\alpha_s(a)\rt_s(g)fb\sim \rt_s(g)fb$ uniformly for $s\in M$. Now \cite[Lemma~2.2]{kqrproper} gives 
\begin{align*}
E(ag)fb&=\int_M\alpha_s(ag)fb\,ds=\int_M\alpha_s(a)\rt_s(g)fb\,ds\\
&\sim\int_M\rt_s(g)fb\,ds=E^{\rt}(g)fb=fb=b.\qedhere
\end{align*}
\end{proof}

\begin{cor}\label{fixKQR}
The inclusion $\iota_G$ of $A^\alpha:=\Fix(A,\alpha,\phi)$ in $M(A)$ is nondegenerate, and extends to an isomorphism of $M(A^\alpha)$ onto 
\[\{m\in M(A):m\iota_G(A^\alpha)\subset\iota_G(A^\alpha)\ \text{and}\  \iota_G(A^\alpha)m\subset\iota_G(A^\alpha)\}\subset M(A).
\]
\end{cor}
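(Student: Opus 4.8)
The plan is to get nondegeneracy of the inclusion straight out of Proposition~\ref{nondegeneracy}, and then to run the standard machinery for extending a nondegenerate embedding of a $C^*$-algebra into a multiplier algebra and identifying the multiplier algebra with the idealizer. First I would check that $\iota_G$ is nondegenerate, i.e.\ that $\overline{\iota_G(A^\alpha)A}=A$. Since $A^\alpha\subset M(A)$ we automatically have $\overline{\iota_G(A^\alpha)A}\subset A$. For the reverse inclusion, recall that $A^\alpha=\overline{E(A_0)}$ and that $A_0=C_c(T)AC_c(T)$ is dense in $A$ because $\phi$ is nondegenerate; since $E(A_0)A_0\subset\iota_G(A^\alpha)A$ and Proposition~\ref{nondegeneracy} says $E(A_0)A_0$ is dense in $A_0$ and hence in $A$, we get $\overline{\iota_G(A^\alpha)A}=A$, which is exactly the nondegeneracy assertion.

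Next, because $\iota_G\colon A^\alpha\to M(A)$ is now a nondegenerate homomorphism, the universal property of the multiplier algebra gives a unique strictly continuous extension $\bar\iota_G\colon M(A^\alpha)\to M(A)$, characterised by $\bar\iota_G(m)\iota_G(x)=\iota_G(mx)$ and $\iota_G(x)\bar\iota_G(m)=\iota_G(xm)$ for $m\in M(A^\alpha)$ and $x\in A^\alpha$. I would then observe that $\bar\iota_G$ is injective: if $\bar\iota_G(m)=0$, then $\iota_G(mx)a=0$ for all $x\in A^\alpha$ and $a\in A$, so $\iota_G(mx)=0$ by nondegeneracy, hence $mx=0$ for all $x$ by injectivity of the inclusion $\iota_G$, and therefore $m=0$ (apply this to an approximate identity of $A^\alpha$).

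The remaining work is to identify the image of $\bar\iota_G$ with the idealizer $N:=\{m\in M(A):m\iota_G(A^\alpha)\subset\iota_G(A^\alpha)\text{ and }\iota_G(A^\alpha)m\subset\iota_G(A^\alpha)\}$. The inclusion $\bar\iota_G(M(A^\alpha))\subset N$ is immediate from the defining formulas for $\bar\iota_G$. For the reverse inclusion I would take $m\in N$ and use injectivity of $\iota_G$ to define $L,R\colon A^\alpha\to A^\alpha$ by $\iota_G(L(x))=m\iota_G(x)$ and $\iota_G(R(x))=\iota_G(x)m$; here membership of $m$ in $N$ guarantees that the right-hand sides indeed lie in $\iota_G(A^\alpha)$, so that $L$ and $R$ are well defined. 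Associativity in $M(A)$ gives $\iota_G(R(x)y)=(\iota_G(x)m)\iota_G(y)=\iota_G(x)(m\iota_G(y))=\iota_G(xL(y))$, whence $R(x)y=xL(y)$, so $(L,R)$ is a genuine multiplier of $A^\alpha$, and a quick check against the defining formulas (using nondegeneracy once more) shows $\bar\iota_G((L,R))=m$. This yields $N\subset\bar\iota_G(M(A^\alpha))$ and finishes the proof.

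I expect essentially all the genuine content to be concentrated in the nondegeneracy step, which is precisely where Proposition~\ref{nondegeneracy} is needed; once $\overline{\iota_G(A^\alpha)A}=A$ is in hand, the extension, its injectivity, and the idealizer identification are routine multiplier-algebra arguments. The point deserving the most care is the surjectivity onto $N$: one must verify that the candidate maps $L$ and $R$ really land in $A^\alpha$ — this is exactly where the two containment conditions defining $N$ are used — and that they satisfy the double-centralizer identity.
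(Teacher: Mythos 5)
Your proposal is correct and is essentially the argument the paper intends: the corollary is deduced from Proposition~\ref{nondegeneracy} exactly as you do, since $E(A_0)A_0\subset\iota_G(A^\alpha)A$ is dense in $A_0$ and hence in $A$ (using nondegeneracy of $\phi$), after which the injectivity of the strict extension $\overline{\iota_G}$ and the identification of its image with the idealizer are the standard double-centralizer arguments you outline. No gaps.
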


Corollary~\ref{fixKQR} was implicitly assumed in  the proof of \cite[Proposition~2.6]{kqrproper}, and hence also indirectly in \cite{aHKRWproper-n} whenever we applied that proposition. To see the problem, recall that \cite[Proposition~2.6]{kqrproper} says that a nondegenerate homomorphism $\sigma:A\to M(B)$ in the comma category ``restricts'' to a nondegenerate homomorphism of $\Fix(A,\alpha,\phi)$ into $M(\Fix(B,\beta,\psi))$. In the proof, they show that the strictly continuous extension $\overline{\sigma}:M(A)\to M(B)$ maps $E(A_0)$ into $\{m\in M(B):m(\Fix B)\cup (\Fix B)m\subset \Fix B\}$, but then they need Corollary~\ref{fixKQR} to identify this with $M(\Fix B)$, so that the restriction $\overline{\sigma}|:\Fix A\to M(B)$ can be viewed as a homomorphism $\sigma|:\Fix A\to M(\Fix B)$. 

In \cite{kqrproper} and \cite{aHKRWproper-n}, nondegenerate homomorphisms were silently extended to multiplier algebras. In this paper, where there are often several different fixed-point algebras around, these issues can be a little slippery, and we often make the extensions explicit to avoid confusion. In particular, the inclusions $\iota_G$ and their extensions to multiplier algebras crop up a lot. When we take these issues into account, we restate Proposition~2.6 of \cite{kqrproper} as follows: 

\begin{prop}\label{newkqr}
Suppose that $\sigma:(A,\alpha,\phi)\to (B,\beta,\psi)$ is a morphism in the comma category $(C_0(T),\rt)\downarrow\Aa_{\nd}(G)$ of \cite{kqrproper}. Then there is a nondegenerate homomorphism $\sigma_G$ of $\Fix(A,\alpha,\phi)$ into $M(\Fix(B,\beta,\psi))$ such that $\overline{\iota_G}\circ\sigma_G$ is the restriction $\overline\sigma\circ\iota_G$ of $\overline{\sigma}:M(A)\to M(B)$ to the subalgebra $\Fix(A,\alpha,\phi)$ of $M(A)$.
\end{prop}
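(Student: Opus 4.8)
The plan is to follow the argument of \cite[Proposition~2.6]{kqrproper} but to make the passage through multiplier algebras explicit, inserting Corollary~\ref{fixKQR} exactly where that argument tacitly needed it. Write $\iota_G^A$ and $\iota_G^B$ for the inclusions of $\Fix(A,\alpha,\phi)$ in $M(A)$ and of $\Fix(B,\beta,\psi)$ in $M(B)$, and set
\[
I_B:=\{m\in M(B):m\,\iota_G^B(\Fix(B,\beta,\psi))\subseteq\iota_G^B(\Fix(B,\beta,\psi))\ \text{and}\ \iota_G^B(\Fix(B,\beta,\psi))\,m\subseteq\iota_G^B(\Fix(B,\beta,\psi))\}.
\]
The three things to establish are: (i) $\overline\sigma\circ\iota_G^A$ carries $\Fix(A,\alpha,\phi)$ into $I_B$; (ii) via Corollary~\ref{fixKQR} this may be read as a $*$-homomorphism into $M(\Fix(B,\beta,\psi))$ satisfying the asserted compatibility; and (iii) the resulting map is nondegenerate.

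For (i) I would first compute $\overline\sigma(E^\alpha(a))$ for $a\in A_0$. Using that $\overline\sigma$ extends the nondegenerate equivariant homomorphism $\sigma$ (so $\overline\sigma\circ\alpha_s=\beta_s\circ\overline\sigma$ and $\overline\sigma\circ\phi=\psi$), that $E^\alpha$ is characterised by its defining integral formula, and writing $\omega:=\rho\circ\overline\sigma$ for $\rho\in B^*$, one gets, for every $a'\in A$,
\begin{align*}
\rho\bigl(\overline\sigma(E^\alpha(a))\,\sigma(a')\bigr)
&=\omega\bigl(E^\alpha(a)a'\bigr)
=\int_G\omega\bigl(\alpha_s(a)a'\bigr)\,ds\\
&=\int_G\rho\bigl(\beta_s(\sigma(a))\,\sigma(a')\bigr)\,ds.
\end{align*}
Since $\sigma$ is nondegenerate this identifies $\overline\sigma(E^\alpha(a))$ with the average $E^\beta(\sigma(a))$ of the (integrable) multiplier $\sigma(a)\in M(B)$, and then the computations in the proof of \cite[Proposition~2.6]{kqrproper} show that $E^\beta(\sigma(a))$ multiplies $\iota_G^B(\Fix(B,\beta,\psi))=\overline{E^\beta(B_0)}$ into itself on both sides, i.e.\ $\overline\sigma(E^\alpha(a))\in I_B$. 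Because $\overline\sigma$ is a $*$-homomorphism of $C^*$-algebras it is norm-continuous, $I_B$ is norm-closed, and $E^\alpha(A_0)$ is norm-dense in $\Fix(A,\alpha,\phi)$ by definition; hence $\overline\sigma(\iota_G^A(\Fix(A,\alpha,\phi)))\subseteq I_B$.

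For (ii), Corollary~\ref{fixKQR} says precisely that $\overline{\iota_G^B}$ restricts to an isomorphism of $M(\Fix(B,\beta,\psi))$ onto $I_B$, so I can define
\[
\sigma_G:=(\overline{\iota_G^B})^{-1}\circ\overline\sigma\circ\iota_G^A:\Fix(A,\alpha,\phi)\longrightarrow M(\Fix(B,\beta,\psi)).
\]
This is a $*$-homomorphism, being the composite of $*$-homomorphisms with the inverse of an isomorphism, and it satisfies $\overline{\iota_G^B}\circ\sigma_G=\overline\sigma\circ\iota_G^A$ by construction, which is the stated relation. For (iii), applying the isometry $\iota_G^B$ and the relation just proved reduces the nondegeneracy of $\sigma_G$ to the density of $\overline\sigma(E^\alpha(A_0))\,E^\beta(B_0)$ in $\overline{E^\beta(B_0)}$ inside $M(B)$; here I expect to use the nondegeneracy of $\sigma$ together with the approximate-identity construction underlying Proposition~\ref{nondegeneracy} (applied to $(B,\beta,\psi)$), exactly as in \cite{kqrproper}.

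The main obstacle --- and the whole reason for restating the result --- is the identification in step (ii): the computation in step (i) only places $\overline\sigma(E^\alpha(a))$ in the idealizer $I_B$, and the original proof silently treated this idealizer as $M(\Fix(B,\beta,\psi))$. Supplying Corollary~\ref{fixKQR} is what legitimately closes this gap. A secondary analytic point, also part of step (i), is to make sense of $E^\beta(\sigma(a))$ for the multiplier $\sigma(a)\in M(B)$ rather than for an element of $B_0$, i.e.\ to check that $\sigma(a)$ is $\beta$-integrable and that the averaging of \cite{proper} applies to it; this is the content handled in \cite[Proposition~2.6]{kqrproper}, and everything else is formal once Corollary~\ref{fixKQR} is available.
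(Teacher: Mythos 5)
Your proposal is correct and takes essentially the same route as the paper: the paper gives no separate proof of Proposition~\ref{newkqr}, presenting it as a restatement of \cite[Proposition~2.6]{kqrproper} whose proof places $\overline\sigma(E^\alpha(A_0))$ in the idealizer of $\iota_G(\Fix(B,\beta,\psi))$ in $M(B)$, with Corollary~\ref{fixKQR} then supplying the identification of that idealizer with $M(\Fix(B,\beta,\psi))$ --- exactly your steps (i)--(iii). Your explicit definition $\sigma_G=(\overline{\iota_G})^{-1}\circ\overline\sigma\circ\iota_G$ and the reduction of nondegeneracy to density of $\overline\sigma(E^\alpha(A_0))E^\beta(B_0)$ in $\overline{E^\beta(B_0)}$ match the intended argument.
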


\section{Fix and Green induction}\label{sec:fixandGreen}

Let $\alpha:G\to \Aut A$ be an action of a locally compact group on a $C^*$-algebra, and let $H$ be a closed subgroup of $G$.  In \cite[\S2]{green}, Green constructed an
 \[((A\otimes C_0(G/H))\rtimes_{\alpha\otimes\lt}G)-(A\rtimes_{\alpha|} H)\] 
 imprimitivity bimodule $X(A,\alpha)$, and defined induction of representations by applying Rieffel's theory to the right-Hilbert $(A\rtimes_\alpha G)$\,--\,$(A\rtimes_{\alpha|} H)$ bimodule $X_H^G(A,\alpha)$ obtained from $X(A,\alpha)$ using the canonical map of $A\rtimes_\alpha G$ into $M((A\otimes C_0(G/H))\rtimes_{\alpha\otimes\lt}G)$ (the details of this process are also discussed in \cite[pages~5-6]{aHKRW-JOT}). Let  $I$ and $J$ be the kernels of the quotient maps of $A \rtimes_{\alpha}G$ and $A\rtimes_{\alpha|} H$ onto the corresponding reduced crossed products.  Induction-in-stages, as in \cite[Proposition~8]{green}, implies that $X_H^G(A,\alpha)\dashind$ takes regular representations to regular representations, so $I=X_H^G(A,\alpha)\dashind J$, and the corresponding quotient $X_{H,r}^G(A,\alpha)$ of $X_H^G(A,\alpha)$ is a right-Hilbert $(A\rtimes_{\alpha,r}G)$\,--\,$(A\rtimes_{\alpha|,r} H)$ bimodule.  Both $X_H^G(A,\alpha)$ and $X_{H,r}^G(A,\alpha)$ are completions of $C_c(G,A)$, viewed as a $C_c(G,A)$\,--\,$C_c(H,A)$ bimodule with the inner products and actions given in \cite[Equations B.5]{enchilada}.

It is a recurring theme in nonabelian duality that duality swaps  induction and restriction of representations \cite{echterhoff,ekr,enchilada, aHKRW-JOT,kqrold}. Our next theorem shows that this is a general phenomenon in our semi-comma category.

\begin{thm}\label{thm-extension}
  Suppose that a locally compact group $G$ acts freely and
  properly on a locally compact space $T$, and $H$ is a closed
  subgroup of $G$. Let $(A,\alpha,\phi)$ be an object in the
   semi-comma category $\Aa(G,(C_0(T),\rt))$. Then $(A,\alpha|{H},\phi)$ is an object in
  $\Aa(H,(C_0(T),\rt))$, and the following diagram commutes in $\CC$:
  \begin{equation}\label{eq-extension}
    \xymatrix{
      A\rtimes_{\alpha,r} G
      \ar[rrr]^{Z(A,\alpha,\phi)}
      \ar[d]_{X_{H,r}^G(A)}
      &&&
      \Fix(A,\alpha,\phi)
      \ar[d]^{\Fix(A,\alpha|{H},\phi)}
      \\
      A \rtimes_{\alpha|,r} H
      \ar[rrr]^{Z(A,\alpha|H,\phi)}
      &&&
      \Fix(A,\alpha|{H},\phi).
    }
  \end{equation}
\end{thm}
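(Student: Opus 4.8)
The first assertion is routine: the restriction to the closed subgroup $H$ of a free and proper action of $G$ on $T$ is again free and proper, and the homomorphism $\phi$ remains $\rt|H$\,--\,$\alpha|H$ equivariant, so $(A,\alpha|H,\phi)$ is an object of $\Aa(H,(C_0(T),\rt))$. The right-hand vertical arrow is the morphism in $\CC$ determined by a canonical nondegenerate homomorphism $\iota\colon\Fix(A,\alpha,\phi)\to M(\Fix(A,\alpha|H,\phi))$, which I would have to produce first. Since each $E^G(a)$ is $G$-invariant and hence $\alpha|H$-invariant, the substantive point is to check that $E^G(A_0)$ multiplies $\Fix(A,\alpha|H,\phi)=\overline{E^H(A_0)}$ into itself from both sides; this places $\Fix(A,\alpha,\phi)$ inside the idealiser identified in Corollary~\ref{fixKQR} with $M(\Fix(A,\alpha|H,\phi))$, and nondegeneracy of $\iota$ would follow from Proposition~\ref{nondegeneracy} applied to both $G$ and $H$.

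With the maps in place, commutativity of \eqref{eq-extension} in $\CC$ means exactly that the two composite right-Hilbert $(A\rtimes_{\alpha,r}G)$\,--\,$\Fix(A,\alpha|H,\phi)$ bimodules agree:
\[
Z(A,\alpha,\phi)\tensor_{\Fix(A,\alpha,\phi)}\Fix(A,\alpha|H,\phi)
\;\cong\;
X_{H,r}^G(A)\tensor_{A\rtimes_{\alpha|,r}H}Z(A,\alpha|H,\phi),
\]
where on the left $\Fix(A,\alpha,\phi)$ acts through $\iota$. The plan is to build the isomorphism on dense cores: realise $Z(A,\alpha,\phi)$ and $Z(A,\alpha|H,\phi)$ as completions of $A_0=C_c(T)AC_c(T)$ and $X_{H,r}^G(A)$ as a completion of $C_c(G,A)$, and define a map on the algebraic tensor product $C_c(G,A)\odot A_0$ (built from the integral $\xi\tensor b\mapsto\int_G\xi(t)\alpha_t(b)\,dt$ that defines Rieffel's left module action). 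I would then check that it is balanced over $A\rtimes_{\alpha|,r}H$, that it intertwines the left $A\rtimes_{\alpha,r}G$-actions, and that the precise placement of the $\Fix(A,\alpha|H,\phi)$-coefficient works out.

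The heart of the matter, and the step I expect to be hardest, is the computation matching the two $\Fix(A,\alpha|H,\phi)$-valued inner products. On the right this form is assembled from Green's $A\rtimes_{\alpha|}H$-valued inner product on $C_c(G,A)$, fed through the left $A\rtimes_{\alpha|,r}H$-action on $Z(A,\alpha|H,\phi)$ and then through $E^H$; on the left it is simply $m^{*}E^G(a^{*}a')m'$. Their equality rests on the integration-in-stages formula $\int_G f(t)\,dt=\int_{G/H}\int_H f(th)\,dh\,d(tH)$, which is precisely how $E^G$ factors through the $G/H$-averaging encoded in Green's bimodule followed by $E^H$. I anticipate that the genuine difficulties are bookkeeping ones: keeping the balancing over the \emph{reduced} crossed product $A\rtimes_{\alpha|,r}H$ straight, tracking the modular functions $\Delta_G$ and $\Delta_H$ in \cite[Equations~B.5]{enchilada}, and confirming surjectivity. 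Once the inner products are shown to agree the map is automatically isometric, and a routine fullness argument yields surjectivity, so it implements the required isomorphism of right-Hilbert bimodules and establishes the commutativity of~\eqref{eq-extension}.
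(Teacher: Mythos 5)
Your setup is correct and matches the paper's: the bimodule structure making $\Fix(A,\alpha|H,\phi)$ into a right-Hilbert $\Fix(A,\alpha,\phi)$\,--\,$\Fix(A,\alpha|H,\phi)$ bimodule (the paper gets it from $E^G(a)E^H(b)=E^H(E^G(a)b)$, you from Corollary~\ref{fixKQR}), and the reformulation of commutativity as an isomorphism of the two composite bimodules. The gap is in the formula you propose at what you yourself call the heart of the matter. The map $\xi\otimes b\mapsto\int_G\xi(t)\alpha_t(b)\,dt$ is Rieffel's left action of $C_c(G,A)\subset A\rtimes_{\alpha,r}G$ on $A_0\subset Z(A,\alpha,\phi)$, so its values lie in $Z(A,\alpha,\phi)$, which is a right-Hilbert $\Fix(A,\alpha,\phi)$-module; the left-hand composite $Z(A,\alpha,\phi)\otimes_{\Fix(A,\alpha,\phi)}\Fix(A,\alpha|H,\phi)$ is a right-Hilbert $\Fix(A,\alpha|H,\phi)$-module, and there is no natural map from $Z(A,\alpha,\phi)$ into it with which to follow your formula (the coefficient algebra is genuinely being extended along $\iota$, which lands in $M(\Fix(A,\alpha|H,\phi))$, not in $\Fix(A,\alpha|H,\phi)$). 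The inner products show this cannot be repaired: for $\xi(s)=a\alpha_s(c^*)$ and $\xi'(s)=a'\alpha_s(c'^*)$ with $a,a',c,c'\in A_0$ (such $\xi$ are dense, which is exactly the paper's Lemma~\ref{lem-dense}), the inner product in $X_{H,r}^G(A)\otimes_{A\rtimes_{\alpha|,r}H}Z(A,\alpha|H,\phi)$ works out to
\[
\bigl\langle \xi\otimes b\,,\,\xi'\otimes b'\bigr\rangle
= E^H(c^*b)^*\,E^G(a^*a')\,E^H(c'^*b'),
\]
whereas your formula sends $\xi\otimes b$ to $aE^G(c^*b)$, and the natural $\Fix(A,\alpha|H,\phi)$-valued pairing of such elements is
\[
E^H\bigl(E^G(c^*b)^*a^*a'\,E^G(c'^*b')\bigr)
= E^G(c^*b)^*\,E^H(a^*a')\,E^G(c'^*b').
\]
The roles of $E^G$ and $E^H$ are swapped, and these differ in general, so your map is not isometric for any reasonable target.

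The underlying point is that the isomorphism cannot be written as a formula in $\xi$ ``as a whole'': under the true identification, a function $\xi(s)=a\alpha_s(c^*)$ must split, with $a$ remaining in the $Z(A,\alpha,\phi)$ leg and $c$ joining $b$ in the coefficient leg as $E^H(c^*b)$. This is why the paper argues in the opposite direction: since the bottom horizontal arrow is invertible, commutativity reduces to exhibiting an isomorphism $Z(A,\alpha,\phi)\otimes_{\Fix(A,\alpha,\phi)}Z(A,\alpha|H,\phi)^\sim\cong X_{H,r}^G(A)$, and in that direction there is a clean formula $\Omega(a\otimes\flat(b))(s)=a\alpha_s(b^*)$, whose density of range is supplied by Lemma~\ref{lem-dense} (itself a nontrivial inductive-limit-topology and norm-estimate argument, not the ``routine fullness argument'' you forecast). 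Finally, your appeal to $\int_G f(t)\,dt=\int_{G/H}\int_H f(th)\,dh\,d(tH)$ is unavailable here: $H$ is not assumed normal, and $G/H$ carries a $G$-invariant measure only when $\Delta_G|_H=\Delta_H$; the paper's computations never integrate over $G/H$, and any corrected version of your approach would have to avoid this as well.
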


Our proof of this theorem is similar to that of \cite[Theorem~3.1]{kqrold}.
The next lemma  seems more elementary to us than the result of Mansfield \cite[Lemma~25]{man} used in \cite[Theorem~3.1]{kqrold}, but we lack a reference. 

\begin{lemma}\label{lem-dense} Suppose $(A,\alpha,\phi)$ is an object in $\Aa(G,(C_0(T),\rt))$. Then the subset $L:=\lsp\{s\mapsto a\alpha_s(b):a,b\in A_0\}$ of $C_c(G,A)$ is dense in Green's imprimitivity bimodule $X_H^G$ (and hence also in $X_{H,r}^G$).
\end{lemma}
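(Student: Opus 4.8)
The plan is to prove density in the inductive limit topology on $C_c(G,A)$: the inner products defining Green's bimodule are continuous for that topology, so it dominates the $X_H^G$-norm, and since $C_c(G,A)$ is dense in $X_H^G$ by construction it suffices to approximate an arbitrary $\xi\in C_c(G,A)$ by elements of $L$ in the inductive limit topology (the conclusion then passes to the quotient $X_{H,r}^G$). Before that I would check $L\subseteq C_c(G,A)$: writing $a=\phi(f_1)a'\phi(g_1)$ and $b=\phi(f_2)b'\phi(g_2)$ with $f_i,g_i\in C_c(T)$, equivariance gives $a\alpha_s(b)=\phi(f_1)a'\phi\bigl(g_1\cdot\rt_s(f_2)\bigr)\alpha_s(b')\phi\bigl(\rt_s(g_2)\bigr)$, and the factor $\phi(g_1\cdot\rt_s(f_2))$ vanishes unless $(\supp g_1)s$ meets $\supp f_2$; properness makes the set of such $s$ relatively compact, so $s\mapsto a\alpha_s(b)$ is continuous with compact support.

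To approximate a general $\xi$, I would first linearise the $s$-dependence. The function $\eta(s):=\alpha_s^{-1}(\xi(s))$ lies in $C_c(G,A)$ and satisfies $\xi(s)=\alpha_s(\eta(s))$. Choosing a finite partition of unity $\{\chi_j\}\subset C_c(G)$ subordinate to a cover of $\supp\xi$ by sets small enough that $\eta$ varies by less than $\epsilon$ on each, and picking $s_j$ in the $j$th set, strong continuity of $\alpha$ gives $\|\xi(s)-\sum_j\chi_j(s)\alpha_s(c_j)\|<\epsilon$ uniformly in $s$, with $c_j:=\eta(s_j)$ and all supports inside $\supp\xi$. After approximating each $c_j$ in norm by an element of the dense subalgebra $A_0$, the problem reduces to approximating a single function $s\mapsto\chi(s)\alpha_s(c)$ with $\chi\in C_c(G)$ and $c\in A_0$.

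The heart of the matter, and the step I expect to be the main obstacle, is to manufacture the scalar cutoff $\chi(s)$ out of the $C_0(T)$-valued multiplier $\phi$; this is exactly where the action being both free and proper is essential. Writing $c=\phi(f_0)c''$ with $\phi(f_0)c=c$ (via \cite[Lemma~2.7]{kqrproper}), for $f,g\in C_c(T)$ one has $\phi(f)\alpha_s(\phi(g)c)=\phi\bigl(f\cdot\rt_s(g)\cdot\rt_s(f_0)\bigr)\alpha_s(c'')$, so the left-hand factor is controlled by functions of the form $(t,s)\mapsto f(t)g(ts)$. The linear span of these is a self-adjoint subalgebra of $C_0(T\times G)$ that vanishes nowhere and, crucially, separates points: if $s_0\neq s_1$ then $ts_0\neq ts_1$ by freeness, so some $g$ distinguishes $(t,s_0)$ from $(t,s_1)$. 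Stone--Weierstrass therefore lets me choose finitely many $f_k,g_k\in C_c(T)$ with $\sum_k f_k(t)g_k(ts)\approx\chi(s)$ uniformly on the set $\{(t,s):f_0(ts)\neq0,\ s\in\supp\chi\}$, which is compact by properness. Feeding this back gives $\sum_k\phi(f_k)\alpha_s(\phi(g_k)c)\approx\chi(s)\alpha_s(c)$ uniformly, with $G$-supports held inside a fixed compact set by properness.

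Finally, each summand must be realised with both arguments in $A_0$, the only delicate point being that $\phi(f_k)\notin A_0$. This I would repair with Lemma~\ref{routinecpct}: choosing $u\in A_0$ with $u\cdot\alpha_s(\phi(g_k)c)\sim\alpha_s(\phi(g_k)c)$ uniformly over the (compact) range of relevant $s$, the elements $a_k:=\phi(f_k)u$ and $b_k:=\phi(g_k)c$ lie in $A_0$ (since $\phi(C_c(T))A_0\subseteq A_0$) and satisfy $a_k\alpha_s(b_k)\approx\phi(f_k)\alpha_s(\phi(g_k)c)$. Summing over $k$ produces an element of $L$ approximating $s\mapsto\chi(s)\alpha_s(c)$ in the inductive limit topology, and reassembling through the two reduction steps completes the approximation of $\xi$, hence the density of $L$ in $X_H^G$ and in $X_{H,r}^G$.
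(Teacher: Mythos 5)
Your proposal inverts the paper's division of labour. The paper's proof simply cites \cite[Lemma~C.1]{aHRWproper2} for the fact that $L$ is dense in $C_c(G,A)$ in the inductive limit topology, and devotes essentially all of its effort to the step you dispose of in one clause: that inductive-limit convergence implies convergence in the $X_H^G$-norm. That step is routine but not purely formal — the paper computes $\langle y,y\rangle_{A\rtimes_{\alpha|}H}(h)=\Delta_H(h)^{-1/2}\bigl((y\Delta_G^{-1})^**y\bigr)(h)$ and bounds the crossed-product norm by an $L^1(H,A)$-norm over a fixed compact support — so in a self-contained write-up you should include that short estimate rather than just invoke ``continuity of the inner products''. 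The substantive part of your proposal is the direct Stone--Weierstrass proof of the inductive-limit density, i.e.\ a reproof of the result the paper cites, and there your argument has a genuine gap.

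The gap is in the choice of compact set. Using $c=\phi(f_0)c$ one has $\sum_k\phi(f_k)\alpha_s(\phi(g_k)c)-\chi(s)\alpha_s(c)=\phi\bigl(\bigl[\textstyle\sum_k f_k\,\rt_s(g_k)-\chi(s)\bigr]\rt_s(f_0)\bigr)\alpha_s(c)$, so the quantity you must make small uniformly over \emph{all} $(t,s)\in T\times G$ is $\bigl|\bigl[\sum_k f_k(t)g_k(ts)-\chi(s)\bigr]f_0(ts)\bigr|$. Your Stone--Weierstrass approximation is performed only on $\{(t,s):f_0(ts)\neq0,\ s\in\supp\chi\}$ (which, incidentally, is only relatively compact; you need its closure), and it therefore gives no control at points with $s\notin\supp\chi$ but $f_0(ts)\neq0$, where the target is $0$. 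Nothing in your construction prevents the approximant from being huge there: adding $N\,f(t)f_0(ts)$ to any good approximant, with $f$ supported off the compact set $(\supp f_0)(\supp\chi)^{-1}$, still satisfies your requirement on your set, yet in general destroys the fed-back approximation at those $s\notin\supp\chi$ with $(\supp f)s\cap\supp f_0\neq\emptyset$. The companion claim that the $G$-supports are ``held inside a fixed compact set by properness'' suffers the same defect: properness bounds the support of each term only by $\{s:(\supp f_k)s\cap\supp f_0\neq\emptyset\}$, which moves with $f_k$. Both problems are repaired together: choose $F\in C_c(T)$ with $0\le F\le1$ and $F\equiv1$ on $(\supp f_0)(\supp\chi)^{-1}$, put $T_0:=\supp F$, run Stone--Weierstrass on the compact set $\{(t,s):t\in T_0,\ ts\in\supp f_0\}$, and replace each $f_k$ by $Ff_k$. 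The cutoff does not disturb the approximation there (wherever $\chi(s)\neq0$ and $ts\in\supp f_0$ one has $t\in(\supp f_0)(\supp\chi)^{-1}$, so $F(t)=1$; where $\chi(s)=0$, multiplying by $F(t)\in[0,1]$ only shrinks the error), the error vanishes identically for $t\notin T_0$, and every term now has $G$-support in the fixed compact set $\{s:T_0s\cap\supp f_0\neq\emptyset\}$. With this correction, and the norm estimate made explicit, your route is a legitimate self-contained alternative to the paper's citation; the freeness-based point-separation and the final $A_0$-repair via Lemma~\ref{routinecpct} are fine as written.
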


\begin{proof}
When we proved in \cite[Lemma~C.1]{aHRWproper2} that $\alpha$ is saturated, we showed that $\lsp\{s\mapsto a\alpha_s(b)\Delta_G(s)^{-1/2}: a,b\in A_0\}$ is dense in $C_c(G,A)$ in the inductive limit topology, and this implies that  $L$ is dense in $C_c(G,A)$ in the inductive limit topology.

We need to show that we can approximate any element $x$ of $C_c(G,A)$ in the $X^G_H$-norm by elements of $L$. We know that there are a compact set $K\subset G$ and $x_n\in L$ such that $\supp x_n \subset K$ and $\|x_n-x\|_\infty\to 0$. For any $y\in C_c(G,A)$, we have
\begin{align*}
\langle y\,,\, y\rangle_{A\rtimes_{\alpha|} H}(h)&=\Delta_H(h)^{-1/2}\int_G\alpha_t\big(y(t^{-1})^*y(t^{-1}h)\big)\, dt\notag\\
&=\Delta_H(h)^{-1/2}\int_G y^*(t)\alpha_t(y(t^{-1}h))\Delta_G(t)\, dt\notag\\
&=\Delta_H(h)^{-1/2}\big(  (y\Delta_G^{-1})^**y\big)(h).
\end{align*}
Thus the support of $\langle x-x_n\,,\, x-x_n\rangle_{A\rtimes H}$ is contained in $K^{-1}K$, and
\begin{align}
\|x-x_n\|^2_{X^G_H}&=\|\langle x-x_n\,,\, x-x_n\rangle_{A\rtimes H}\|\notag
=\|\Delta_H^{-1/2}\big(  (x-x_n)\Delta_G^{-1})^**(x-x_n)\big)\|\notag
\\
&\leq \|\Delta_H^{-1/2}\big(  (x-x_n)\Delta_G^{-1})^**(x-x_n)\big)\|_{L^1(H,A)}\notag\\
&\leq C(H, G,K)\|x_n-x\|^2_{L^1(H,A)}\label{eq-ilt}
\end{align}
where $C(H, G,K)=\|(\Delta_H^{-1/2}\Delta_G^{-1})|K\|_\infty\mu(K^{-1})$.  Since $\|x_n-x\|_\infty\to 0$, we also have $x_n|H\to x|H$ in the inductive limit topology on $C_c(H,A)$, and hence $\|x-x_n\|_{L^1(H,A)}\to 0$.  Now \eqref{eq-ilt} implies that  $x_n\to x$ in $X_H^G$.
\end{proof}

\begin{proof}[Proof of Theorem~\ref{thm-extension}]
Fix $a,b\in A_0$. Note that $E^G(a)$ and $E^H(b)$ are both in $M(A)$.  Moreover, $E^G(a)b\in A_0$ by \cite[Lemma~2.3]{kqrproper} and $E^G(a)E^H(b)=E^H(E^G(a)b)$ by  \cite[Lemma~2.1]{kqrproper}.  
Thus  $\Fix(A,\alpha|H,\phi)$ is a right-Hilbert $\Fix(A,\alpha,\phi)$\,--\,$\Fix(A,\alpha|H,\phi)$ bimodule, and the right vertical arrow in the diagram \eqref{eq-extension} makes sense.
Since the horizontal arrows in  \eqref{eq-extension} are invertible, to see that \eqref{eq-extension}  commutes it suffices to show 
  \begin{equation}\label{eq-ext-iso}
  Z(A,\alpha,\phi)\otimes_{\Fix(A,\alpha, \phi)}Z(A,\alpha|H,\phi)^\sim\cong X_{H,r}^G(A)
  \end{equation}
  as right-Hilbert $(A\rtimes_{\alpha,r}G)$\,--\,$(A\rtimes_{\alpha|,r}H)$ bimodules.

Define $\Omega:A_0\otimes_{\Fix(A,\alpha, \phi)} A_0^\op\to C_c(G,A)\subset X_{H,r}^G$  by $\Omega(a\otimes\flat(b))(s)=a\alpha_s(b^*)$. 
We will show that $\Omega$ extends to give the isomorphism \eqref{eq-ext-iso}.
Since we know from Lemma~\ref{lem-dense} that $\Omega$ has dense range, it suffices to show that $\Omega$ preserves the $(A\rtimes_{\alpha|,r}H)$-valued inner products and is $(A\rtimes_{\alpha,r}G)$-linear.

Fix  $a,b,c,d\in A_0$. We compute the $(A\rtimes_{\alpha|,r}H)$-valued inner products: 
  \begin{align*}
  \langle\Omega(a\otimes\flat(b))&\,,\, \Omega(c\otimes\flat(d))\rangle_{A\rtimes_{\alpha|, r}H}(h)\\
  &=\Delta_G(h)^{-1/2}\int_G\alpha_t\big(\Omega(a\otimes\flat(b))(t^{-1})^*\Omega(c\otimes\flat(d))(t^{-1}h)\big)\, dt\\
   &=\Delta_G(h)^{-1/2}\int_G\alpha_t\big( (a\alpha_{t^{-1}}(b^*))^*c\alpha_{t^{-1}h}(d^*) \big)\, dt\\
    &=\Delta_G(h)^{-1/2}\left(\int_G b\alpha_t(a^*c)\, dt \right)\alpha_h(d^*)=\Delta_G(h)^{-1/2}bE^G(a^*c)\alpha_h(d^*);
  \end{align*}
  \begin{align*}
  \langle a\otimes\flat(b)\,,\, c\otimes\flat(d)\rangle_{A\rtimes_{\alpha|,r}H}(h)
  &=\big\langle \langle c\,,\, a\rangle_{\Fix(A,\alpha, \phi)}\cdot\flat(b)\,,\, \flat(d)  \big\rangle_{A\rtimes_{\alpha|,r}H}(h)\\
  &=\big\langle E^G(c^*a)\cdot\flat(b)\,,\, \flat(d) \big\rangle_{A\rtimes_{\alpha|,r}H}(h)\\
  &=\langle\flat(b\cdot E^G(c^*a)^*\,,\, \flat(d)\rangle_{A\rtimes_{\alpha|,r}H}(h)\\
  &={}_{A\rtimes_{\alpha|,r}H}\langle b\cdot E^G(c^*a)^*\,,\, d\rangle(h)\\
  &=\Delta_G(h)^{-1/2}bE^G(a^*c)\alpha_h(d^*),
  \end{align*}
  so $\Omega$ is isometric.  If $z\in C_c(G,A)\subset A\rtimes_{\alpha,r}G$ and $x\in X_{H,r}^G$, then $z\cdot x$ is given by the formula $(z\cdot x)(s)=\int_G z(t) \alpha_t(x(t^{-1}s))\Delta_G(t)^{1/2}\, dt$ (see \cite[\S2]{aHKRW-JOT}).  Thus
  \begin{align*}
  (z\cdot\Omega(a\otimes\flat(b)))(s)
   &=\int_G z(t)\alpha_t\big( \Omega(a\otimes\flat(b))(t^{-1}s) \big)\Delta_G(t)^{1/2}\, dt\\
  &= \int_G z(t)\alpha_t(a\alpha_{t^{-1}s}(b^*))\Delta_G(t)^{1/2}\, dt\\
  &= \left( \int_G z(t)\alpha_t(a)\Delta_G(t)^{1/2}\, dt\right)\alpha_s(b^*)\\
  &=(z\cdot a)\alpha_s(b^*)
  =\Omega(z\cdot a\otimes\flat(b))(s).
  \end{align*}
  Hence $\Omega$ extends to an isomorphism, as required.
\end{proof}

Theorem~\ref{thm-extension} has three direct outcomes. First we generalise Theorem~3.1 of \cite{kqrold} to non-normal subgroups, by applying Theorem~\ref{thm-extension} with $(T,G,H)=(G,K,H)$ and $(A,\alpha,\phi)=(B\rtimes_\delta G,\hat\delta,j_G)$.

\begin{cor}
  \label{prop-prob2homog}  Suppose that $H$ and $K$ are closed subgroups of a locally compact group $G$ with $H\subset K$, and $(B,\delta)$ is an object in $\Aca^\nor(G)$.
Then the following diagram commutes in~$\CC$:
  \begin{equation*}
    \xymatrix{
      (B\rtimes_\delta G) \rtimes_{\hat\delta, r}K
      \ar[rrr]^{Z(B\rtimes_\delta G,\hat\delta|_K,j_G)}
      \ar[d]_{X_{H,r}^K(B\rtimes_\delta G)}
      &&&
      B\rtimes_{\delta,r}(G/K)
      \ar[d]^{B\rtimes_{\delta,r}(G/H)}
      \\
      (B\rtimes_\delta G) \rtimes_{\hat\delta,r}H
      \ar[rrr]^{Z(B\rtimes_\delta G,\hat\delta|_H,j_G)}
      &&&
      B\rtimes_{\delta, r}(G/H).
    }
  \end{equation*}
\end{cor}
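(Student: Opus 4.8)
The plan is to obtain this as a direct application of Theorem~\ref{thm-extension}, using the substitutions $(T,G,H)=(G,K,H)$ and $(A,\alpha,\phi)=(B\rtimes_\delta G,\hat\delta,j_G)$, and then to translate the resulting commuting square into the language of crossed products by homogeneous spaces via the identifications collected in \S\ref{sec-he-he} and in the Coactions subsection.

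First I would verify that the hypotheses of Theorem~\ref{thm-extension} hold with $K$ in the role of the acting group: the right-translation action of the closed subgroup $K$ on $G$ is free and proper, and by Theorem~3.13 of \cite{enchilada} the triple $\CP_K(B,\delta)=(B\rtimes_\delta G,\hat\delta|_K,j_G)$ is an object in the semi-comma category $\Aa(K,(C_0(G),\rt))$. Applying Theorem~\ref{thm-extension} with $G$ replaced by $K$ and $T$ replaced by $G$ then yields a commuting square in $\CC$ whose corners are $(B\rtimes_\delta G)\rtimes_{\hat\delta,r}K$, $\Fix(B\rtimes_\delta G,\hat\delta|_K,j_G)$, $(B\rtimes_\delta G)\rtimes_{\hat\delta,r}H$, and $\Fix(B\rtimes_\delta G,\hat\delta|_H,j_G)$. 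At this point I would record the identity $(\hat\delta|_K)|_H=\hat\delta|_H$, valid because $H\subset K$, so that the restricted action appearing along the bottom of the theorem's square is exactly the restriction of the dual action of $G$ to $H$, and the Green bimodule on the left is $X_{H,r}^K(B\rtimes_\delta G)$ as required.

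It then remains only to rename objects and arrows. By \cite[Proposition~5.5]{aHKRWproper-n} (see also \cite[\S6]{kqrproper}), the fixed-point algebra $\Fix(B\rtimes_\delta G,\hat\delta|_K,j_G)$ coincides with $B\rtimes_{\delta,r}(G/K)$, and likewise $\Fix(B\rtimes_\delta G,\hat\delta|_H,j_G)=B\rtimes_{\delta,r}(G/H)$. Under these identifications the horizontal bimodules $Z(B\rtimes_\delta G,\hat\delta|_K,j_G)$ and $Z(B\rtimes_\delta G,\hat\delta|_H,j_G)$, the left vertical Green bimodule, and the right vertical restriction bimodule $\Fix(B\rtimes_\delta G,\hat\delta|_H,j_G)=B\rtimes_{\delta,r}(G/H)$ become precisely the arrows of the corollary's diagram, and commutativity is inherited verbatim from Theorem~\ref{thm-extension}.

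Since the analytic content is entirely absorbed into Theorem~\ref{thm-extension}, there is no substantive obstacle; the only points needing care are bookkeeping. I would make sure the subgroup being restricted is $H\subset K$ rather than $H\subset G$, confirm the identity $(\hat\delta|_K)|_H=\hat\delta|_H$, and apply the fixed-point identification of \S\ref{sec-he-he} to \emph{both} $K$ and $H$ so that the domain and codomain of the right vertical bimodule are named correctly.
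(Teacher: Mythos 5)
Your proposal is correct and is exactly the paper's argument: the paper obtains this corollary precisely by applying Theorem~\ref{thm-extension} with $(T,G,H)=(G,K,H)$ and $(A,\alpha,\phi)=(B\rtimes_\delta G,\hat\delta,j_G)$, with the fixed-point algebras identified as $B\rtimes_{\delta,r}(G/K)$ and $B\rtimes_{\delta,r}(G/H)$ via the discussion in the preliminaries. Your additional bookkeeping (freeness and properness of the $K$-action on $G$, the identity $(\hat\delta|_K)|_H=\hat\delta|_H$, and applying the identification to both subgroups) is exactly what the paper leaves implicit.
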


The second outcome of Theorem~\ref{thm-extension} characterises representations of $\Fix(A,\alpha, \phi)$ which extend to representations of $\Fix(A,\alpha|H,\phi)$, in the spirit of \cite[Theorem~6.1]{aHKRW-JOT}.  Indeed, applying Proposition~\ref{prop-rep} with $(T,G,H)=(G,K,H)$ gives a direct generalisation of Theorem~6.1 of \cite{aHKRW-JOT} to pairs of subgroups $H\subset K$ with either $H$  normal in $K$ or $H$ amenable. 

\begin{prop}\label{prop-rep}
 Suppose that locally compact group $G$ acts freely and
  properly on a locally compact space $T$, and $H$ is a closed
  subgroup of $G$. Assume that $H$ is either normal or amenable. Let $(A,\alpha,\phi)$ be an object in $\Aa(G,(C_0(T),\rt))$. Let $\omega$ be a representation of $\Fix(A,\alpha,\phi)$ on a
  Hilbert space $\H$, and denote by $(\rho,U)$ the covariant
  representation of $(A, G,\alpha)$ such that
  $\rho\rtimes U=Z(A,\alpha,\phi)\dashind \omega$.  Then there is a
  representation $\eta$ of $\Fix(A,\alpha|H,\phi)$ on $\H$ such that
  $\omega=\eta|\Fix (A,\alpha,\phi)$ if and only if there is a representation $\phi$
  of $C_0(G/H)$ in the commutant $\rho(A)'$ such that
  $(\phi, U)$ is a covariant representation of $(C_0(G/H), G,\lt)$.
\end{prop}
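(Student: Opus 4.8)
The plan is to convert the problem of extending $\omega$ to $\Fix(A,\alpha|H,\phi)$ into a statement about the induced covariant representation $(\rho,U)$, and then to recognise that statement using Green's imprimitivity theorem. I first record, as in the proof of Theorem~\ref{thm-extension}, that the right-hand vertical morphism in \eqref{eq-extension} is exactly the bimodule coming from the nondegenerate inclusion $\iota$ of $\Fix(A,\alpha,\phi)$ in $M(\Fix(A,\alpha|H,\phi))$. Inducing a representation $\eta$ of $\Fix(A,\alpha|H,\phi)$ along this bimodule returns the restriction $\eta|\Fix(A,\alpha,\phi)=\overline\eta\circ\iota$, so the condition ``there is $\eta$ with $\omega=\eta|\Fix(A,\alpha,\phi)$'' is the same as ``$\omega\cong\Fix(A,\alpha|H,\phi)\dashind\eta$ for some $\eta$''.

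First I would prove the equivalence of this with the assertion that $\rho\rtimes U\cong X_{H,r}^G(A)\dashind\sigma$ for some representation $\sigma$ of $A\rtimes_{\alpha|,r}H$, using Theorem~\ref{thm-extension}, induction-in-stages for Rieffel induction, and the invertibility of the imprimitivity bimodules $Z(A,\alpha,\phi)$ and $Z(A,\alpha|H,\phi)$. Given $\eta$, set $\sigma:=Z(A,\alpha|H,\phi)\dashind\eta$; then induction-in-stages and the commutativity of \eqref{eq-extension} give
\[
X_{H,r}^G(A)\dashind\sigma\cong Z(A,\alpha,\phi)\dashind\bigl(\Fix(A,\alpha|H,\phi)\dashind\eta\bigr)=Z(A,\alpha,\phi)\dashind\omega=\rho\rtimes U.
\]
Conversely, given such a $\sigma$, let $\eta$ be the representation induced from $\sigma$ along the inverse of the isomorphism $Z(A,\alpha|H,\phi)$ and run the same computation; since $Z(A,\alpha,\phi)$ is an imprimitivity bimodule, $Z(A,\alpha,\phi)\dashind$ is injective on unitary equivalence classes, so $\Fix(A,\alpha|H,\phi)\dashind\eta\cong\omega$, i.e.\ $\omega=\eta|\Fix(A,\alpha,\phi)$.

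It then remains to match ``$\rho\rtimes U\cong X_{H,r}^G(A)\dashind\sigma$ for some $\sigma$'' with the existence of a representation $\Phi$ of $C_0(G/H)$ in $\rho(A)'$ for which $(\Phi,U)$ is covariant for $(C_0(G/H),G,\lt)$. At the level of full crossed products this is precisely Green's imprimitivity theorem: via the $((A\otimes C_0(G/H))\rtimes_{\alpha\otimes\lt}G)$\,--\,$(A\rtimes_{\alpha|}H)$ imprimitivity bimodule $X(A,\alpha)$, the representation $\rho\rtimes U$ of $A\rtimes_\alpha G$ is $X_H^G(A,\alpha)$-induced from $A\rtimes_{\alpha|}H$ exactly when $(\rho,U)$ extends to a representation $(\rho,\Phi,U)$ of $(A\otimes C_0(G/H))\rtimes_{\alpha\otimes\lt}G$, that is, exactly when such a $\Phi$ exists. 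The remaining and main difficulty is to transport this equivalence to the reduced level, matching $X_{H,r}^G(A)$ with $A\rtimes_{\alpha|,r}H$. Here I would use that $\rho\rtimes U$ already factors through $A\rtimes_{\alpha,r}G$ (being $Z(A,\alpha,\phi)\dashind\omega$), together with the identity $I=X_H^G(A,\alpha)\dashind J$ and the Rieffel correspondence for the imprimitivity bimodule $X(A,\alpha)$, to argue that the inducing representation may be taken to factor through $A\rtimes_{\alpha|,r}H$.

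I expect this reduced-level passage to be the real obstacle, and it is what forces the hypothesis that $H$ be normal or amenable. When $H$ is amenable the full and reduced crossed products of $A$ by $H$ coincide and there is nothing to check; the normal case instead requires the compatibility of Green induction with the regular representations, along the lines of \cite[Proposition~8]{green} and the proof of \cite[Theorem~6.1]{aHKRW-JOT}. Finally I would attend to the bookkeeping of Hilbert spaces: $\omega$ and $\eta$ are to live on $\H$ while $\rho$, $U$ and $\Phi$ live on the induced space of $Z(A,\alpha,\phi)\dashind\omega$, so to realise the representations literally on the prescribed spaces (with $\omega$ equal to, not merely equivalent to, $\eta|\Fix(A,\alpha,\phi)$) I would transport along the canonical unitaries implementing Rieffel induction; this is routine but should be made explicit to match the statement.
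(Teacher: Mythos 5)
Your proposal is correct and follows essentially the same route as the paper's proof: use the commutativity of diagram \eqref{eq-extension} together with the invertibility of the Rieffel bimodules to translate the extendibility of $\omega$ into the statement that $\rho\rtimes U$ lies in the image of $X_{H,r}^G(A)\dashind$, and then apply Green's imprimitivity theorem to characterise that image. The only difference is one of emphasis: where you propose to re-derive the reduced-level imprimitivity statement for normal $H$, the paper simply cites the usual version of Green's theorem when $H$ is amenable and \cite[Theorem~6.2]{aHKRW-JOT} when $H$ is normal.
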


\begin{proof}   The proof proceeds as in \cite[Theorem~6.1]{aHKRW-JOT}.
  Since the horizontal arrows in \eqref{eq-extension} are bijections, $\omega$ is in the image
  of $\Res:=\Fix(A,\alpha|,\phi)\dashind$ if and only if $\rho\rtimes U$ is in the image of
  $X_H^G(A)\dashind$.  Green's imprimitivity theorem
  (the usual version if $H$ is amenable, or \cite[Theorem~6.2]{aHKRW-JOT} if $H$ is normal) says that $\rho\rtimes U$ is in the
  image of $X_H^G(A)\dashind$ if and only if there
  is a representation $\phi$ of $C_0(G/H)$ in the commutant of
  $\rho(A)$ such that $(\phi, U)$ is a covariant
  representation of $(C_0(G/H), G,\lt)$.
\end{proof}

The third outcome of Theorem~\ref{thm-extension} takes more work. 

\begin{thm}\label{thm-green}
Let $H$ and $K$ be closed subgroups of $G$ with $H\subset K$.

\textnormal{(1)} The reduced Green bimodules $X_{H,r}^K(A,\alpha)$ implement  a natural transformation between the functors $\RCP_H:(A,\alpha,\phi)\mapsto A\rtimes_{\alpha,r}H$ and $\RCP_K$ on $\Aa(G,(C_0(T),\rt))$.

\textnormal{(2)} The bimodules implementing restriction of representations implement a natural transformation between the functors $\Fix_H:(A,\alpha,\phi)\mapsto \Fix(A,\alpha|H,\phi)$ and $\Fix_K$ on $\Aa(G,(C_0(T),\rt))$.
\end{thm}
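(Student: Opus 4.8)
The plan is to prove part~(1) directly --- this is where the real work lies --- and then to obtain part~(2) formally from Theorem~\ref{thm-extension}. Fix a morphism $[{}_AX_B]\colon (A,\alpha,\phi)\to (B,\beta,\psi)$ in $\Aa(G,(C_0(T),\rt))$. Since $\RCP_K$ and $\RCP_H$ send this morphism to the reduced crossed products $[X\rtimes_r K]$ and $[X\rtimes_r H]$, and $X_{H,r}^K(A,\alpha)$ is a right-Hilbert $(A\rtimes_{\alpha,r}K)$\,--\,$(A\rtimes_{\alpha|,r}H)$ bimodule, part~(1) asks for a natural isomorphism
\[
X_{H,r}^K(A,\alpha)\otimes_{A\rtimes_{\alpha|,r}H}[X\rtimes_r H]\;\cong\;[X\rtimes_r K]\otimes_{B\rtimes_{\beta,r}K}X_{H,r}^K(B,\beta)
\]
of right-Hilbert $(A\rtimes_{\alpha,r}K)$\,--\,$(B\rtimes_{\beta|,r}H)$ bimodules. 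The class of morphisms for which this square commutes is closed under composition, so by Proposition~2.1 of \cite{aHKRWproper-n} it is enough to verify it on the two generating types: morphisms coming from an equivariant nondegenerate homomorphism $\kappa\colon A\to M(B)$, and isomorphisms coming from equivariant imprimitivity bimodules ${}_AX_B$. Note that, in contrast to part~(2), no fixed-point algebra enters here, so only the actions $\alpha,\beta$ and their restrictions are relevant.

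For the homomorphism case, $\kappa$ induces homomorphisms $\kappa\rtimes_r L\colon A\rtimes_{\alpha|,r}L\to M(B\rtimes_{\beta|,r}L)$ for $L\in\{H,K\}$, and $[X\rtimes_r L]$ reduces to the bimodule determined by $\kappa\rtimes_r L$. Recalling from the proof of Lemma~\ref{lem-dense} that $X_{H,r}^K(A,\alpha)$ and $X_{H,r}^K(B,\beta)$ are completions of $C_c(K,A)$ and $C_c(K,B)$ with explicit convolution formulas for the actions and the $(A\rtimes_{\alpha|,r}H)$-valued inner product, I would define the comparison map by applying $\kappa$ coordinatewise, $C_c(K,A)\to C_c(K,M(B))$, and check directly that it carries Green's inner products and left $(A\rtimes_{\alpha,r}K)$-action over to those of $B$. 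This is routine, since $\kappa$ is equivariant and multiplicative.

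The imprimitivity-bimodule case is the main obstacle. Here $X\rtimes_r L$ is an $(A\rtimes_{\alpha,r}L)$\,--\,$(B\rtimes_{\beta,r}L)$ imprimitivity bimodule, and both sides of the required isomorphism are completions of spaces assembled from $C_c(K,X)$. I would define the comparison map on a dense subspace --- for instance on elementary tensors of functions in $C_c(K,A)$ and $C_c(H,X)$, sent by a convolution formula into $C_c(H,X)$, modelled on the balancing in the proof of Theorem~\ref{thm-extension} --- and then verify that it is well defined on the balanced internal tensor product, preserves the $(B\rtimes_{\beta|,r}H)$-valued inner product, and is $(A\rtimes_{\alpha,r}K)$-linear. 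The equivariance of $X$ is exactly what makes the $\alpha$- and $\beta$-twists cancel in these computations, which parallel but are heavier than those in Theorem~\ref{thm-extension}. As in the passage from $X_H^K$ to $X_{H,r}^K$ in the preliminaries, I would if necessary carry out the verification at the level of full crossed products and $C_c$ and then descend to the reduced algebras, using that Green induction sends regular representations to regular representations.

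Finally, part~(2) follows with no further computation. Applying Theorem~\ref{thm-extension} with $G$ replaced by $K$ (and subgroup $H$) gives, for each object, a commuting square in $\CC$ that identifies the restriction bimodule ${}_{\Fix_K}(\Fix_H)_{\Fix_H}$ with
\[
Z(A,\alpha|K,\phi)^\sim\otimes X_{H,r}^K(A,\alpha)\otimes Z(A,\alpha|H,\phi).
\]
By \cite[Theorem~3.5]{aHKRWproper-n}, applied over $H$ and over $K$ after precomposing with the restriction functors $\Aa(G,(C_0(T),\rt))\to\Aa(L,(C_0(T),\rt))$, the assignments $Z(\,\cdot\,,\alpha|K,\phi)$ and $Z(\,\cdot\,,\alpha|H,\phi)$ implement natural isomorphisms $\RCP_K\cong\Fix_K$ and $\RCP_H\cong\Fix_H$. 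The restriction transformation of part~(2) is therefore the vertical composite of these natural isomorphisms with the Green transformation of part~(1), hence is itself natural; conversely the same identity shows that (1) and (2) are equivalent, so either one could serve as the primary object of proof. The expected difficulty is concentrated entirely in the imprimitivity-bimodule case of part~(1).
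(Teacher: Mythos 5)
Your reduction of part~(2) to part~(1) is exactly the paper's own argument: both exploit the cube \eqref{astrid'scube}, whose top and bottom faces commute by \cite[Theorem~3.5]{aHKRWproper-n} and whose front and back faces commute by Theorem~\ref{thm-extension}, so that naturality of restriction is equivalent to naturality of Green induction. Where you diverge is in how part~(1) is established. The paper does \emph{not} prove the naturality square from scratch: it quotes the full-crossed-product version from \cite[Theorem~4.1]{taco} (the commutativity of diagram~(4.1) there, which holds for arbitrary right-Hilbert bimodule morphisms), and then descends to reduced crossed products by a quotient argument, namely Lemma~\ref{quotientbimod} and Lemma~\ref{tensorquot} combined with the identifications $I=X_H^K\dashind J$ (induction-in-stages) and $J=(W\rtimes_{u|}H)\dashind L$ (Combes \cite{combes}). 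You instead propose to reprove the square directly, reducing via \cite[Proposition~2.1]{aHKRWproper-n} to the homomorphism case and the imprimitivity-bimodule case and doing $C_c$-level computations.

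This divergence is where the genuine gaps lie. First, your imprimitivity-bimodule case --- which you yourself flag as the place where all the difficulty is concentrated --- is precisely the content of \cite[Theorem~4.1]{taco}, and your sketch (``define a comparison map on elementary tensors by a convolution formula, then verify'') does not supply it: well-definedness on the balanced tensor product, isometry for the Green inner products, and density of the range are exactly the nontrivial verifications, and in the literature they are handled by linking-algebra techniques rather than bare convolution formulas; as written you have a promissory note where the paper has a citation. Second, your passage from full to reduced crossed products is underspecified. The fact that Green induction carries regular representations to regular representations gives $I=X_H^K\dashind J$, but to quotient the \emph{naturality square} (rather than just the Green bimodule) you also need the morphism bimodule to match up the reduced kernels, i.e.\ $J=(W\rtimes_{u|}H)\dashind L$; this is Combes's theorem and is not a consequence of regularity of Green induction. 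You also need the bookkeeping that converts an isomorphism of full-level balanced tensor products into one of reduced-level balanced tensor products, which the paper isolates in Lemmas~\ref{quotientbimod} and~\ref{tensorquot}. None of this is fatal to your plan, but the proposal leaves open exactly the points that the paper's citation of \cite{taco} and its two quotient lemmas are there to close.
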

    
In fact, naturality of induction (part (1) of Theorem~\ref{thm-green}) is equivalent to naturality of restriction (part (2) of Theorem~\ref{thm-green}). To see this, suppose that $[W,u]$ is a morphism in $\Aa(G,(C_0(T),\rt))$ from $(A,\alpha,\phi)$ to $(B,\beta,\psi)$, and consider the diagram
\begin{equation}\label{astrid'scube}
\xymatrix@!0@R=40pt@C=80pt{
A\rtimes_{\alpha|,r}K
\ar[rrr]^{Z(A,\alpha|K,\phi)}
\ar[ddd]_{X_{H,r}^K(A,\alpha)}
\ar[rd]^(.6){W\rtimes_{u|,r}K}
&
&
&
\Fix(A,\alpha|K,\phi)
\ar'[d][ddd]^{\Fix(A,\alpha|H)}
\ar[rd]^(.6){\Fix(W,u|K)}
&
\\
&
B\rtimes_{\beta|,r}K
\ar[rrr]^{Z(B,\beta|K,\psi)}
\ar[ddd]_{X_{H,r}^K(B,\beta)}
&
&
&
\Fix(B,\beta|K,\psi)
\ar[ddd]^{\Fix(B,\beta|H)}
\\
&&&&\\
A\rtimes_{\alpha|,r}H
\ar'[r][rrr]^(.4){Z(A,\alpha|H,\phi)}
\ar[rd]_(0.4){W\rtimes_{u|,r}H\;}
&
&
&
\Fix(A,\alpha|H,\phi)
\ar[rd]_(.4){\Fix(W,u|H)\;}
&
\\
&
B\rtimes_{\beta|,r}H
\ar[rrr]^{Z(B,\beta|H,\psi)}
&
&
&
\Fix(B,\beta|H,\psi).
}
\end{equation}
Part (1) says that the left-hand face commutes and part (2) that the right-hand face commutes.
The top and bottom faces commute by \cite[Theorem~3.5]{aHKRWproper-n}, and the front and back faces commute by Theorem~\ref{thm-extension}. So, since the left-right arrows are all isomorphisms in our category (that is, are implemented by imprimitivity bimodules), the left-hand face commutes if and only if the right-hand face commutes, and part~(1) is equivalent to part~(2).

So we can prove the theorem by proving either (1) or (2). On the face of it (sorry), it would seem to be easier to prove that a square involving restriction maps commutes. However, when we apply this to nonabelian duality, the corners in the right-hand face will be crossed products by homogeneous spaces $G/H$ and $G/K$, and since representations of such crossed products are not given by covariant pairs, ``restriction'' doesn't obviously have an intuitive meaning. 
So we complete the proof of Theorem\ref{thm-green} with the following lemma.

\begin{lemma}\label{redGreennat}
The left-hand face of \eqref{astrid'scube} commutes.
\end{lemma}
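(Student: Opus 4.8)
The plan is to show that the left-hand face of the cube \eqref{astrid'scube} commutes by verifying that the two composite morphisms from $A\rtimes_{\alpha|,r}K$ to $B\rtimes_{\beta|,r}H$ agree. The left-hand face involves the reduced Green bimodules $X_{H,r}^K(A,\alpha)$ and $X_{H,r}^K(B,\beta)$ along the vertical edges, and the reduced crossed-product morphisms $W\rtimes_{u|,r}K$ and $W\rtimes_{u|,r}H$ along the horizontal edges. Concretely, I would need to establish an isomorphism of right-Hilbert $(A\rtimes_{\alpha|,r}K)$\,--\,$(B\rtimes_{\beta|,r}H)$ bimodules
\begin{equation*}
X_{H,r}^K(A,\alpha)\otimes_{A\rtimes_{\alpha|,r}H}(W\rtimes_{u|,r}H)\cong (W\rtimes_{u|,r}K)\otimes_{B\rtimes_{\beta|,r}K}X_{H,r}^K(B,\beta),
\end{equation*}
compatible with the balancing over the relevant crossed products.

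The key steps would be as follows. First, I would invoke Proposition~2.1 of \cite{aHKRWproper-n}, which lets me factor the morphism $[W,u]$ as an isomorphism (coming from an imprimitivity bimodule) followed by a morphism coming from a nondegenerate homomorphism $\kappa:A\to M(\K(W))=\L(W)$; this reduces the problem to two cases. For the isomorphism case, naturality should follow because Green's construction is itself functorial and isomorphisms get carried along the bimodule construction cleanly. The substantive case is the one coming from a nondegenerate equivariant homomorphism $\sigma:A\to M(B)$, where I would work at the level of the dense subspaces $C_c(K,A)$ and $C_c(K,B)$ of Green's bimodules, use the explicit inner-product and action formulas from \cite[Equations B.5]{enchilada}, and check that the natural map induced by $\sigma$ on the level of $C_c$-functions intertwines the balanced tensor-product structure in the prescribed way. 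Because this is a statement about Green bimodules and induced homomorphisms on crossed products, I would expect to exhibit a map on $C_c(K,A)$ that respects both the $K$-balancing (on the $B\rtimes_{\beta|,r}K$ side) and the $H$-balancing (on the $A\rtimes_{\alpha|,r}H$ side), then extend by density and check that both inner products are preserved, so that the map is isometric and has dense range.

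The main obstacle I anticipate is bookkeeping the two distinct balancings simultaneously: one tensor product is balanced over $A\rtimes_{\alpha|,r}H$ and the other over $B\rtimes_{\beta|,r}K$, and the induced homomorphism $\sigma$ must be shown to be compatible with the restriction of $u$ to \emph{both} $H$ and $K$ in a coherent way. The functoriality of Green's $X$-bimodule construction under equivariant homomorphisms, together with induction-in-stages, is the engine here, but verifying the compatibility at the level of the dense $C_c$-subspaces requires care to keep track of modular functions $\Delta_K$ and $\Delta_H$ and of where the canonical maps into multiplier algebras intervene. An alternative route, should the direct computation become unwieldy, is to observe that naturality of Green induction between the \emph{full} crossed-product functors is essentially classical (it is a manifestation of the functoriality of Green's bimodule and induction-in-stages, \cite[Proposition~8]{green}), and then to descend to the reduced level using that the reduced Green bimodule $X_{H,r}^K$ is the quotient of $X_H^K$ by the appropriate ideals, as recalled in the text preceding Theorem~\ref{thm-extension}. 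I would lean on this classical naturality and the compatibility of the quotient maps with the morphisms $W\rtimes_{u|} K$ and $W\rtimes_{u|} H$ to conclude.
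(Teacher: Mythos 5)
Your ``alternative route'' is in fact the paper's actual proof: the paper deduces the commutativity of the left-hand face of \eqref{astrid'scube} from the corresponding statement for \emph{full} crossed products, and then passes to quotients. But as you have stated it, this route has two concrete gaps. First, the full-level naturality you call ``essentially classical'' is not a consequence of \cite[Proposition~8]{green}: Green's result is induction-in-stages, which by itself says nothing about naturality with respect to a morphism $[W,u]$ given by an equivariant right-Hilbert bimodule. The statement actually needed is \cite[Theorem~4.1]{taco} (commutativity of diagram (4.1) there), which is precisely the assertion that
\[
X_{H}^K(A,\alpha)\otimes_{A\rtimes_{\alpha|}H}(W\rtimes_{u|}H)\cong
(W\rtimes_{u|} K)\otimes_{B\rtimes_{\beta|}K}X_{H}^K(B,\beta)
\]
as right-Hilbert $(A\rtimes_{\alpha|}K)$\,--\,$(B\rtimes_{\beta|}H)$ bimodules. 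Second, the descent to the reduced level is not mere ``compatibility of the quotient maps.'' To identify the quotient of each balanced tensor product with the balanced tensor product of the quotients, one must know that the kernels $I\subset A\rtimes_{\alpha|}K$, $J\subset A\rtimes_{\alpha|}H$ and $L\subset B\rtimes_{\beta|}H$ of the regular representations correspond under induction: $I=X_H^K\dashind J$ (which does follow from induction-in-stages) and, crucially, $J=(W\rtimes_{u|}H)\dashind L$, which is Combes' theorem \cite[Corollary on page 300]{combes}. Without the latter, the left action of $A\rtimes_{\alpha|,r}H$ on the quotient of $W\rtimes_{u|}H$ is not even well defined, so the reduced balanced tensor product cannot be formed. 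The paper isolates exactly these steps in Lemmas~\ref{quotientbimod} and~\ref{tensorquot}.

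Your primary route (factor $[W,u]$ via \cite[Proposition~2.1]{aHKRWproper-n} and compute on dense $C_c$-subspaces) is not what the paper does, and I would warn against it: the ``isomorphism case'' is not as automatic as you suggest, and in the homomorphism case the real difficulty is not the bookkeeping of modular functions but showing that a map defined on $C_c$-subspaces is bounded for the \emph{reduced} norms on both sides. Controlling reduced norms from dense-subspace formulas is exactly the problem that the full-then-quotient strategy is designed to avoid, so if you pursue this lemma, take the route of your final paragraph, with the two ingredients above made explicit.
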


The analogue of the left-hand face for full crossed products commutes by \cite[Theorem~4.1]{taco}, and we will deduce from this that the reduced version commutes. To do this, we need two lemmas.

\begin{lemma}\label{quotientbimod}
Suppose that $\theta:{}_AX_B\to {}_AY_B$ is an isomorphism of right-Hilbert bimodules, and $J$ is an ideal in $B$. Then $X\dashind J=Y\dashind J$. If $I$ is an ideal in $A$ which is contained in $X\dashind J$, then there is a right-Hilbert $A/I$\,--\,$B/J$ bimodule isomorphism $\theta^J$ of $X^J:=X/XJ$ onto $Y^J$ such that $\theta^J(q(x))=q(\theta(x))$ for $x\in X$.
\end{lemma}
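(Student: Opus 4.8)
The plan is to reduce both assertions to a single observation: the isomorphism $\theta$ carries $XJ$ onto $YJ$. First I would record that, because $\theta$ preserves the $B$-valued inner products, it is isometric for the Hilbert-module norms and hence continuous; since it also intertwines the right $B$-actions we have $\theta(x\cdot b)=\theta(x)\cdot b$, so $\theta$ maps the dense subspace $\lsp\{x\cdot b:x\in X,\ b\in J\}$ of $XJ$ onto $\lsp\{\theta(x)\cdot b:x\in X,\ b\in J\}$, which is dense in $YJ$. Taking closed spans (legitimate because $\theta$ is a surjective isometry, so it commutes with closure) then yields $\theta(XJ)=YJ$.

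For the equality $X\dashind J=Y\dashind J$, I would use that $X\dashind J=\{a\in A:a\cdot X\subseteq XJ\}$ is the kernel of the induced action of $A$ on $X/XJ$. If $a\in X\dashind J$, then applying $\theta$ and using that it intertwines the left $A$-action and that $\theta(X)=Y$ and $\theta(XJ)=YJ$ gives $a\cdot Y=\theta(a\cdot X)\subseteq\theta(XJ)=YJ$, so $a\in Y\dashind J$; the reverse inclusion follows by symmetry (applying the argument to $\theta^{-1}$). This proves the first assertion.

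Next, given an ideal $I\subseteq X\dashind J=Y\dashind J$, both $X^J=X/XJ$ and $Y^J=Y/YJ$ are right-Hilbert $A/I$\,--\,$B/J$ bimodules in the standard way, the left action descending to $A/I$ precisely because $I$ annihilates the quotients. Since $\theta(XJ)=YJ$, the assignment $\theta^J(q(x))=q(\theta(x))$ is well-defined, and $\theta$ being bijective forces $\theta^J$ to be a linear bijection. It then remains to verify that $\theta^J$ respects the structure, and I expect each check to be immediate from the corresponding property of $\theta$ together with the definitions of the quotient operations: right $B/J$-linearity and left $A/I$-linearity because $\theta$ intertwines the two actions, and preservation of the $B/J$-valued inner product because $\langle\theta(x),\theta(y)\rangle_B=\langle x,y\rangle_B$ holds already upstairs and hence modulo $J$.

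There is no deep obstacle here; the lemma is essentially bookkeeping. The one step that genuinely needs care is the identity $\theta(XJ)=YJ$, since it is what allows $\theta$ to pass to the quotients and simultaneously what forces $X\dashind J$ and $Y\dashind J$ to coincide. That identity rests on $\theta$ being a surjective isometry, not merely a bijective bimodule map, so I would be careful to invoke inner-product preservation rather than just boundedness when commuting $\theta$ past the closed span defining $XJ$.
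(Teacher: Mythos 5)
Your proof is correct and follows essentially the same route as the paper's: the key observation $\theta(XJ)=YJ$, the characterisation of $X\dashind J$ as $\{a\in A: a\cdot X\subseteq XJ\}$, and the induced map $\theta^J$ on quotients. The only difference is cosmetic—where you verify the Hilbert-module and ideal facts directly, the paper cites \cite[Proposition~2.5, Lemma~3.23]{tfb} and \cite[Definition~1.7]{enchilada}.
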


\begin{proof}  (We use the same letter $q$ for several different quotient maps.)
Notice that $\theta$ maps $XJ$ onto $YJ$, and hence induces a linear isomorphism of $X^J$ onto $Y^J$ such that $\theta^J(q(x))=q(\theta(x))$, and it follows from \cite[Proposition~2.5]{tfb} that $\theta^J$ is an isomorphism of right-Hilbert $B/J$--modules. For $a\in A$, the definition of the induced ideal (in \cite[Definition~1.7]{enchilada}) gives
\begin{align*}a\in X\dashind J
&\Longleftrightarrow a\cdot x=0\text{ for all $x\in X$}\\
&\Longleftrightarrow a\cdot x\in XJ\text{ for all $x\in X$ (by \cite[Lemma~3.23]{tfb})}\\
&\Longleftrightarrow a\cdot \theta(x)\in\theta(XJ)=YJ\text{ for all $x\in X$}\\
&\Longleftrightarrow a\in Y\dashind J.
\end{align*}
Thus the left actions of $A$ on $X^J$ and $Y^J$ pass to left actions of $A/(Y\dashind J)$ and $A/I$ such that $q(a)\cdot q(x)=q(a\cdot x)$, and $\theta^J$ is also an $(A/I)$-module homomorphism.
\end{proof}

\begin{lemma}\label{tensorquot}
Suppose that we have right-Hilbert bimodules $_AV_B$ and $_BY_C$, and that we have ideals $I$ in $A$, $J$ in $B$ and $L$ in $C$ satisfying $J\subset Y\dashind L$ and $I\subset V\dashind J$. Then the map $v\otimes y\mapsto q(v)\otimes q(y)$ induces an isomorphism
\[
(V\otimes_B Y)/((V\otimes_B Y)L)\cong(V/VJ)\otimes_{B/J}(Y/YL)
\]
of right-Hilbert $(A/I)$\,--\,$(C/L)$ bimodules.
\end{lemma}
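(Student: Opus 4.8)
The plan is to write down the evident map on elementary tensors and verify that it is a surjective isometry intertwining both module structures, the content being essentially that internal tensor products are compatible with passing to quotients by induced ideals. First I would confirm that both sides are right-Hilbert $(A/I)$\,--\,$(C/L)$ bimodules. The hypothesis $J\subset Y\dashind L$ means exactly that $J\cdot Y\subset YL$, so by (the construction in the proof of) Lemma~\ref{quotientbimod} the quotient $Y/YL$ is a right-Hilbert $(B/J)$\,--\,$(C/L)$ bimodule; likewise $I\subset V\dashind J$ gives $I\cdot V\subset VJ$ and makes $V/VJ$ a right-Hilbert $(A/I)$\,--\,$(B/J)$ bimodule, so the internal tensor product $(V/VJ)\otimes_{B/J}(Y/YL)$ on the right is a right-Hilbert $(A/I)$\,--\,$(C/L)$ bimodule. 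For the left-hand side I would check that $I\subset(V\otimes_B Y)\dashind L$ via
\[
I\cdot(V\otimes_B Y)=(I\cdot V)\otimes_B Y\subset (VJ)\otimes_B Y=V\otimes_B(J\cdot Y)\subset V\otimes_B(YL)\subset (V\otimes_B Y)L,
\]
so that the left action of $A$ on $(V\otimes_B Y)/((V\otimes_B Y)L)$ factors through $A/I$ and this quotient is also a right-Hilbert $(A/I)$\,--\,$(C/L)$ bimodule (again by Lemma~\ref{quotientbimod}).

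Next I would define $\Phi$ on the algebraic tensor product by $\Phi(v\otimes y)=q(v)\otimes q(y)$. This is $B$-balanced because each $q$ is a bimodule map: $\Phi(v\cdot b\otimes y)=q(v)q(b)\otimes q(y)=q(v)\otimes q(b)q(y)=\Phi(v\otimes b\cdot y)$, so $\Phi$ is well defined. The key computation is that $\Phi$ intertwines the inner products modulo $L$. Using the formula $\langle v\otimes y\,,\,v'\otimes y'\rangle_C=\langle y\,,\,\langle v,v'\rangle_B\cdot y'\rangle_C$ for the inner product on an internal tensor product, together with the fact (from Lemma~\ref{quotientbimod} and \cite[Proposition~2.5]{tfb}) that the $(B/J)$- and $(C/L)$-valued inner products on $V/VJ$ and $Y/YL$ are the images under $q$ of those on $V$ and $Y$, I would obtain
\[
\langle\Phi(v\otimes y)\,,\,\Phi(v'\otimes y')\rangle_{C/L}
=\langle q(y)\,,\,q(\langle v,v'\rangle_B)\cdot q(y')\rangle_{C/L}
=q\big(\langle v\otimes y\,,\,v'\otimes y'\rangle_C\big).
\]

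Finally I would read off the conclusion. The displayed identity says precisely that $\Phi$ carries the $C$-valued inner product to the $(C/L)$-valued one through the quotient map $q\colon C\to C/L$; hence $\Phi$ annihilates exactly those $z\in V\otimes_B Y$ with $\langle z,z\rangle_C\in L$, that is, it kills $(V\otimes_B Y)L$ and descends to an isometric map out of $(V\otimes_B Y)/((V\otimes_B Y)L)$. Since $q(V)$ and $q(Y)$ are dense in $V/VJ$ and $Y/YL$, the elementary tensors $q(v)\otimes q(y)$ span a dense subspace of the target, so this isometry has dense and therefore full range and is onto. Left $(A/I)$-linearity is immediate from $\Phi(a\cdot(v\otimes y))=q(a)\cdot\Phi(v\otimes y)$, so the descended map is the required isomorphism of right-Hilbert $(A/I)$\,--\,$(C/L)$ bimodules. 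The only genuinely delicate point is the inner-product computation: keeping straight that the quotient inner products are the $q$-images of the originals is exactly what makes the $B/J$-balancing collapse correctly, and this is where I would take the most care.
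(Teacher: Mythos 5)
Your proposal is correct and follows essentially the same route as the paper's proof: verify that the map on elementary tensors satisfies $q\big(\langle u\otimes x, v\otimes y\rangle_C\big)=\langle q(u)\otimes q(x), q(v)\otimes q(y)\rangle_{C/L}$, that it preserves the left action, and that it has dense range. The paper states these as ``calculations''; you have simply carried them out and added the (correct) preliminary check that $I\subset (V\otimes_B Y)\dashind L$, so the two arguments coincide in substance.
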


\begin{proof}
A calculation shows that $q\big(\langle u\otimes x,v\otimes y\rangle_C\big)=\langle q(u)\otimes q(x),q(v)\otimes q(y)\rangle_C$, so the map is well-defined and preserves the inner product. Another calculation shows that the map preserves the left action, and  since it has dense range this suffices.
\end{proof}

\begin{proof}[Proof of Lemma~\ref{redGreennat}]
The commutativity of diagram (4.1) in \cite{taco} says that there is an isomorphism
\begin{equation}\label{fromtaco}
X_{H}^K(A,\alpha)\otimes_{A\rtimes_{\alpha|}H}(W\rtimes_{u|}H)\cong
(W\rtimes_{u|} K)\otimes_{B\rtimes_{\beta|}K}X_{H}^K(B,\beta)
\end{equation}
of right-Hilbert $(A\rtimes_{\alpha|}K)$\,--\,$(B\rtimes_{\beta|} H)$ bimodules; we write $\LHS$ and $\RHS$ for the left and right sides of \eqref{fromtaco}. Let $I\subset A\rtimes_{\alpha|} K$, $J\subset A\rtimes_{\alpha|} H$ and $L\subset B\rtimes_{\beta|} H$ be the kernels of the quotient maps onto the reduced crossed products. Then applying Lemma~\ref{quotientbimod} to \eqref{fromtaco} gives an isomorphism
\begin{equation}\label{quotiso}
\LHS/{(\LHS) L}\cong\RHS/{(\RHS) L}.
\end{equation}
Since $I=X_H^K\dashind J$ by induction-in-stages, and $J=(W\rtimes_{u|} H)\dashind L$ by \cite[Corollary on page 300]{combes}, Lemma~\ref{tensorquot} implies that
\[
\LHS/{(\LHS) L}\cong X_{H,r}^K(A,\alpha)\otimes_{A\rtimes_{\alpha,r}H}(W\rtimes_{u|,r}H).
\]
Similarly, we have
\[
\RHS/{(\RHS) L}\cong
(W\rtimes_{u|,r} K)\otimes_{B\rtimes_{\beta|,r}K}X_{H,r}^K(B,\beta),
\]
and the isomorphism in \eqref{quotiso} says that left face of \eqref{astrid'scube} commutes.
\end{proof}

\section{Fixing-in-stages}\label{sec:abstractfis}

Throughout this section  $G$ is a locally compact group acting freely and properly on a locally compact space $T$, and  $N$ is a closed normal subgroup of $G$. Restricting the actions to $N$ gives a functor from $\Aa(G,(C_0(T),\rt))$ to $\Aa(N,(C_0(T),\rt|))$. Since the action of $N$ on $T$ is also free and proper, we can then apply the functor $\Fix$, and the composition is a functor
$\Fix_N:\Aa(G,(C_0(T),\rt))\to \CC$ which assigns to each object $(A,\alpha,\phi)$ a fixed-point algebra $A^{\alpha|N}:=\Fix(A,\alpha|N,\phi)$. We will build an equivariant version $\Fix_N^{G/N}$ of $\Fix_N$ with values in $\Aa(G/N,(C_0(T/N),\rt|))$, so that there is an iterated fixed-point algebra $\Fix_{G/N}(A^{\alpha|N})$, and prove that $\Fix_{G/N}\circ\Fix_N^{G/N}$ is naturally  isomorphic to $\Fix_G$ (see Theorem~\ref{thm-newfix} below).  

Viewing functions in $C_0(T/N)$ as bounded functions on $T$ gives an embedding of $C_0(T/N)$ in $C_b(T)=M(C_0(T))$, which identifies $C_0(T/N)$ with the generalised fixed-point algebra $C_0(T)^{\rt}=\Fix (C_0(T),\rt|N,\id)$ (see, for example, \cite[Proposition~3.1]{marrae}). Now applying Proposition~\ref{newkqr} to $\phi:C_0(T)\to M(A)$ gives a nondegenerate homomorphism $\phi_N:C_0(T/N)\to M(A^{\alpha|N})$ such that $\overline{\iota_N}\circ \phi_N$ is the restriction to $C_0(T/N)$ of the strictly continuous extension $\overline{\phi}:M(C_0(T))\to M(A)$.

\begin{prop}\label{prop-actG/N}
Let $[X,u]$ be a morphism from $(A,\alpha,\phi)$ to $(B,\beta,\psi)$ in the  semi-comma category $\Aa(G,(C_0(T),\rt))$. There are actions $\alpha^{G/N}$, $\beta^{G/N}$ and $u^{G/N}$ of $G/N$ such that $(\Fix(X,u|N),u^{G/N})$ defines a morphism from $(A^{\alpha|N},\alpha^{G/N},\phi_N)$ to $(B^{\beta|N},\beta^{G/N},\psi_N)$ in the category $\Aa(G/N,(C_{0}(T/N),\rt))$, and the assignments
\begin{equation}\label{deffixgn}
    (A,\alpha,\phi)\mapsto (A^{\alpha|N},\alpha^{G/N},\phi_N)\quad\text{and}
    \quad [X,u]\mapsto [\Fix(X,u|N),u^{G/N}]
  \end{equation}
form a functor $\Fix^{G/N}_{N}$ from $\Aa(G,(C_{0}(T),\rt))$ to $\Aa(G/N,(C_{0}(T/N),\rt^{G/N}))$.
\end{prop}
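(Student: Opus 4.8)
The plan is to obtain all three $G/N$-actions by restricting the given $G$-actions to the fixed-point objects, to check that these restrictions descend to $G/N$, and then to deduce functoriality from the functoriality of $\Fix$ already established in \cite[Theorem~3.3]{aHKRWproper-n}. First I would construct $\alpha^{G/N}$. For $s\in G$, let $\overline{\alpha_s}$ denote the extension of $\alpha_s$ to $M(A)$. Writing $\omega(\overline{\alpha_s}(E^N(a)))=\int_N\omega(\alpha_s\alpha_n(a))\,dn$ for $\omega\in A^*$ and $a\in A_0$, and substituting $n\mapsto s^{-1}ns$ (legitimate because $N$ is normal, at the cost of a positive modular factor $\delta_N(s)$ coming from the conjugation action of $G$ on the Haar measure of $N$), gives $\overline{\alpha_s}(E^N(a))=\delta_N(s)\,E^N(\alpha_s(a))$. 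Here $\alpha_s(a)\in A_0$ because $\alpha$ is equivariant and $\rt_s$ preserves $C_c(T)$, so the right-hand side lies in $A^{\alpha|N}$; thus $\overline{\alpha_s}$ maps $A^{\alpha|N}$ into itself, the scalar $\delta_N(s)$ being irrelevant for this containment. Since $A^{\alpha|N}\subset M(A)^{\alpha|N}$, the maps $\overline{\alpha_n}|_{A^{\alpha|N}}$ are trivial for $n\in N$, so $sN\mapsto\overline{\alpha_s}|_{A^{\alpha|N}}$ is a well-defined homomorphism $G/N\to\Aut(A^{\alpha|N})$, which I take to be $\alpha^{G/N}$; strong continuity follows from the displayed formula, since $s\mapsto E^N(\alpha_s(a))$ is norm-continuous, together with a density-and-boundedness argument. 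The action $\beta^{G/N}$ is defined identically.

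Next I would check that $(A^{\alpha|N},\alpha^{G/N},\phi_N)$ is an object of the target category, i.e. that $\phi_N$ is $\rt^{G/N}$--$\alpha^{G/N}$ equivariant (its nondegeneracy is built into the construction via Proposition~\ref{newkqr}). Because $\overline{\iota_N}$ is injective, it suffices to verify the equivariance after applying $\overline{\iota_N}$. Using $\overline{\iota_N}\circ\phi_N=\overline{\phi}|_{C_0(T/N)}$ together with the relation $\overline{\iota_N}\circ\alpha^{G/N}_{sN}=\overline{\alpha_s}\circ\overline{\iota_N}$ and its extension to multipliers (tautological, since $\alpha^{G/N}_{sN}=\overline{\alpha_s}|_{A^{\alpha|N}}$), the desired identity reduces to $\overline{\alpha_s}\circ\overline{\phi}=\overline{\phi}\circ\overline{\rt_s}$ on the $N$-invariant functions $C_0(T/N)\subset M(C_0(T))$; this is the given equivariance of $\phi$, once one notes that $\rt_s$ acts on $C_0(T/N)$ as $\rt^{G/N}_{sN}$.

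For the morphism assignment I would repeat the construction one level up: given $[X,u]$, the extension $\overline{u_s}$ preserves $\Fix(X,u|N)$ by the same modular computation, descends to $G/N$ because $\Fix(X,u|N)$ is fixed by $u|N$, and is compatible with $\alpha^{G/N}$ and $\beta^{G/N}$ precisely because $u$ is compatible with $\alpha$ and $\beta$. This yields the action $u^{G/N}$ and hence a morphism $[\Fix(X,u|N),u^{G/N}]$ in $\Aa(G/N,(C_0(T/N),\rt^{G/N}))$; no compatibility with $\phi_N,\psi_N$ is needed, since morphisms of the semi-comma category ignore that data. Functoriality is then largely inherited: preservation of identities and compositions at the level of $\CC$ is \cite[Theorem~3.3]{aHKRWproper-n} applied to the group $N$, so I only need to check that the $G/N$-actions constructed above are carried correctly by the canonical $\CC$-isomorphisms, namely by $\Fix(X\otimes Y,\cdot)\cong\Fix(X,\cdot)\otimes\Fix(Y,\cdot)$ and by the identification of $\Fix$ of an identity bimodule with the identity bimodule on $A^{\alpha|N}$. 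This is immediate because all of those actions are restrictions of the corresponding $G$-actions, which those isomorphisms already intertwine.

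The main obstacle I expect is the descent step: showing that the extended $G$-action genuinely preserves the fixed-point algebra and the fixed-point module, and drops to a strongly continuous action of $G/N$. This is exactly where the normality of $N$ is used, through the change of variables in the Haar integral over $N$ and the modular factor $\delta_N(s)$ it produces; keeping track of that factor and confirming that it obstructs neither the containment nor the continuity is the delicate point.
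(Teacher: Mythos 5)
Your construction of the actions $\alpha^{G/N}$, $\beta^{G/N}$ and $u^{G/N}$ is essentially the paper's: the same change-of-variables computation over $N$ producing the modular factor (the paper's $\Delta_{G,N}$, your $\delta_N$), the same observation that this factor is harmless for the containment $\alpha_t(A^{\alpha|N})\subset A^{\alpha|N}$, and the same descent to $G/N$ via triviality on $N$. Two smaller points deserve more care than you give them. First, strong continuity is not just ``density-and-boundedness'': $E^N$ is defined only on $A_0$ and is not a bounded map, so the norm continuity of $t\mapsto E^N(\alpha_t(a))$, which you simply assert, is itself the thing to be proved; the paper handles it by fixing $f,g\in C_c(T)$ with $\alpha_t(a)=f\alpha_t(a)g$ for $t$ near $e$ and using the norm continuity of $b\mapsto E^N(fbg)$ for fixed $f,g$. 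Second, morphisms in these categories are \emph{isomorphism classes} of bimodules, so the assignment $[X,u]\mapsto[\Fix(X,u|N),u^{G/N}]$ must be shown to be well defined on classes (the paper's Lemma~\ref{lem-fixgnn}, via an equivariant linking-algebra isomorphism); your proposal never addresses this.

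The genuine gap is your treatment of functoriality. You claim it suffices to note that the canonical isomorphisms from the proof of \cite[Theorem~3.3]{aHKRWproper-n} ``already intertwine'' the $G$-actions, and that this is ``immediate because all of those actions are restrictions.'' But that theorem concerns the group $N$ alone; no $G$- or $G/N$-action appears in it, so no equivariance is ``already'' known, and it is precisely this equivariance that has to be established. Moreover the actions on the two sides of each isomorphism are defined by different procedures --- for instance $u^{G/N}\otimes\gamma^{G/N}$ on a tensor product of fixed modules versus $(u\otimes\gamma)^{G/N}$ defined by restriction on the fixed module of the tensor product --- and the isomorphisms themselves are assembled through several intermediate constructions (the homomorphism $(\Phi_L)_N$ supplied by Proposition~\ref{newkqr}, corner identifications inside $2\times 2$ and $3\times 3$ linking algebras, and the map $\theta|$), none of which carries a pre-established compatibility with the $G/N$-actions. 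Verifying equivariance at each of these steps is exactly what the paper's Lemmas~\ref{lem-prop4.1} and~\ref{lem-thm4.5} and the concluding argument of the proof do, and this chase constitutes the bulk of the paper's proof. Your intuition that the checks succeed because everything is a restriction of a single $G$-action is sound, and indeed each verification is routine; but a proof must actually carry the actions through the construction, and dismissing this as ``immediate'' skips the core of the argument rather than completing it.
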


To prove Proposition~\ref{prop-actG/N}, we have to define the actions, and then chase through the proof that $\Fix$ is a functor  \cite[\S4]{aHKRWproper-n} checking that the constructions are equivariant. We do this in a series of lemmas. 

We claim that each $\alpha_t$ maps $A^{\alpha|N}$ into $A^{\alpha|N}$, and then define $\alpha^{G/N}_{tN}:=\alpha_t|{A^{\alpha|N}}$.  Since $N$ is normal in $G$,
\begin{equation}\label{Delta}
\int_N \xi(n)\, dn
=\Delta_{G,N}(t)\int_N \xi(t^{-1}nt)\,dn:=\Delta_G(t)\Delta_{G/N}(tN)^{-1}\int_N \xi(t^{-1}nt)\, dn
\end{equation}
for $\xi\in C_c(N)$.   Fix $f\in C_c(T)$,  $t\in G$ and $a\in A_0$. Then, writing $E^N$ for the expectation associated with $(A,\alpha|N,\phi)$ and applying \cite[Lemma~2.2]{kqrproper}, we have
\begin{align}\label{alphaokonG/N}
f\alpha_t(E^N(a))&=\alpha_t(\alpha_t^{-1}(f)E^N(a))
=\alpha_t\Big(\int_N f\alpha_{n}(a)\, dn\Big)
=\int_N f\alpha_{(tnt^{-1})t}(a)\, dn\notag\\
&=\Delta_{G,N}(t)\int_N f\alpha_{nt}(a)\, dn=f\Delta_{G,N}(t)E^N(\alpha_t(a)).
\end{align}
Since $\phi$ is nondegenerate, this implies that $\alpha_t(E^N(a))=\Delta_{G,N}(t)E^N(\alpha_t(a))$, which belongs to $E^N(A_0)\subset A^{\alpha|N}$. The continuity of $\alpha_t$ now implies that $\alpha_t(A^{\alpha|N})\subset A^{\alpha|N}$, and we can define $\alpha^{G/N}:G/N\to\Aut(A^{\alpha|N})$ by 
$\alpha_{tN}=\alpha_t|{A^{\alpha|N}}$. To see that $\alpha^{G/N}$ is strongly continuous, let $a\in A_0$, and choose $f,g\in C_c(T)$ such that $\alpha_t(a)=f\alpha_t(a)g$ for all $t$ in a compact neighbourhood of $e$. Then the norm continuity of $a\mapsto E^N(fag)$ implies that $t\mapsto E^N(f\alpha_t(a)g)$ is continuous at $e$, and  \eqref{alphaokonG/N} implies that $t\mapsto \alpha_{tN}^{G/N}(E^N(a))$ is continuous at $e$.

Since $\phi\circ \rt_t=\alpha_t\circ\phi$ and the definition of $\alpha^{G/N}$ implies that $\iota_N\circ\alpha^{G/N}_{tN}=\alpha_t\circ\iota_N$, $\phi_N=(\overline{\iota_N})^{-1}\circ\overline\phi$ is $\rt$\,--\,$\alpha^{G/N}$ equivariant, and since $\rt^{G/N}_{tN}=\rt_t|C_0(T/N)$, the triple $(A^{\alpha|N},\alpha^{G/N},\phi_N)$ is an object in the semi-comma category. Now we have to deal with morphisms.  Since the morphisms in the semi-comma category are the same as those in $\Aa(G/N)$, we don't need to worry about the homomorphisms $\phi_N$ any more. So we take $(X,u)$ as in Proposition~\ref{prop-actG/N}, and recall the construction of $\Fix X$ from \cite[\S3]{aHKRWproper-n}, as it applies to $u|N$. Let $(\K(X),\mu,\phi_{\K})$ be the object in $\Aa(G,(C_{0}(T),\rt))$ and $\kappa:A\to M(\K(X))$ the
nondegenerate homomorphism provided by
the canonical decomposition of $[X,u]$ in
\cite[Corollary~2.3]{aHKRWproper-n}.  The actions $\mu$, $u$ and $\beta$ give an action $L(u)$ of $G$ on the linking algebra $L(X)$, and 
$(L(X),L(u),\phi_{\K}\oplus\psi)$ is an object in $\Aa(G,(C_{0}(T),\rt))$. Proposition~3.1 of \cite{aHKRWproper-n} implies that the top right-hand corner $X^{u|N}$ in $L(X)^{L(u)|N}$ is a $\K^{\mu|N}$\,--\,$B^{\beta|N}$ imprimitivity bimodule, and Proposition~\ref{newkqr} gives a
nondegenerate homomorphism $\kappa_N:A^{\alpha|N}\to M(\K^{\mu|N})$, so $X^{u|N}$ becomes a right-Hilbert $A^{\alpha|N}$\,--\,$B^{\beta|N}$ bimodule $\Fix(X,u|N)$.  

The construction described two paragraphs above applies to the action $L(u)$, giving a strongly continuous action $L(u)^{G/N}:G/N\to \Aut L(X)^{L(u)|N}$ which restricts to the actions $\beta^{G/N}$ and $\mu^{G/N}$ on the diagonal corners, and hence restricts to a compatible action $u^{G/N}$ on the corner $X^{u|N}$. The operations on the \ib{} $X^{u|N}$ come from the matrix operations in $L(X)^{L(u)|N}$, so $(X^{u|N},u^{G/N})$ is\label{defmorphism} a $(\K(X)^{\mu|N},\alpha^{G/N})$\,--\,$(B^{\beta|N},\beta^{G/N})$ \ib. The homomorphism $\kappa|N$ defining the left action is $\alpha^{G/N}$--$\mu^{G/N}$ equivariant, and hence $(X^{u|N},u^{G/N})$ is a right-Hilbert $(A^{\alpha|N},\alpha^{G/N})$\,--\,$(B^{\beta|N},\beta^{G/N})$ bimodule. 

Next we show that the map $[X,u]\mapsto [X^{u|N},u^{G/N}]$ is well-defined.

\begin{lemma}
  \label{lem-fixgnn}
  Suppose that $(A,\alpha,\phi)$ and $(B,\beta,\psi)$ are objects in the semi-comma category
  $\Aa\bigl(G,(C_{0}(T),\rt))\bigr)$. 
  If $(X,u)$ and $(Y,v)$ are isomorphic as right-Hilbert
  $(A,\alpha)$\,--\,$(B,\beta)$ bimodules, then
  $(\Fix(X,u|N),u^{G/N})$ and
  $(\Fix(Y,v|N),v^{G/N})$ are isomorphic as right-Hilbert
  $(A^{\alpha|N},\alpha^{G/N})$\,--\,$(B^{\beta|N},\beta^{G/N})$
  bimodules.
\end{lemma}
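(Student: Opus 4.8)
The plan is to lift the given bimodule isomorphism to the linking algebras, where the quotient actions $u^{G/N}$ and $v^{G/N}$ were constructed, to invoke the functoriality of $\Fix$ already established in \cite[\S4]{aHKRWproper-n}, and then to verify the one genuinely new ingredient: equivariance for the quotient group $G/N$.

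First I would take an isomorphism $\Theta\colon X\to Y$ of right-Hilbert $(A,\alpha)$\,--\,$(B,\beta)$ bimodules, so that $\Theta$ is an $A$\,--\,$B$ bimodule map preserving the $B$-valued inner product with $\Theta\circ u_s=v_s\circ\Theta$ for all $s\in G$. Such a $\Theta$ extends to a $*$-isomorphism $L(\Theta)\colon L(X)\to L(Y)$ of the linking algebras which is the identity on the corner $B$, is $T\mapsto\Theta T\Theta^{-1}$ on $\K(X)\to\K(Y)$, and is $\Theta$ on the $X$-corner. Because $\Theta$ intertwines $u$ and $v$ and the left $A$-actions, $L(\Theta)$ intertwines $L(u)$ and $L(v)$ and carries the embedding $\phi_{\K}\oplus\psi$ of $C_0(T)$ into $L(X)$ to the corresponding embedding into $L(Y)$; thus $L(\Theta)$ is an isomorphism of the objects $(L(X),L(u),\phi_{\K}\oplus\psi)$ and $(L(Y),L(v),\phi_{\K}\oplus\psi)$, compatible with the homomorphisms $\phi$ as in Proposition~\ref{newkqr}.

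Next I would apply the construction of $\Fix$ for $N$ to $L(\Theta)$. Since $L(\Theta)$ respects the $C_0(T)$-embeddings, it carries $L(X)_0$ onto $L(Y)_0$, and the defining integral formula for the expectation $E^N$ gives a $*$-isomorphism $L(\Theta)^N\colon L(X)^{L(u)|N}\to L(Y)^{L(v)|N}$ determined on the dense subalgebra $E^N(L(X)_0)$ by $L(\Theta)^N(E^N(c))=E^N(L(\Theta)(c))$. As $L(\Theta)$ maps the corner projection of $L(X)$ to that of $L(Y)$, so does $L(\Theta)^N$, and hence it restricts to an isomorphism $\Theta^N\colon X^{u|N}\to Y^{v|N}$ of right-Hilbert $(A^{\alpha|N})$\,--\,$(B^{\beta|N})$ bimodules (the left action being intertwined because it factors through the $\K^{\mu|N}$-corner, which $L(\Theta)^N$ respects). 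It then remains to check $G/N$-equivariance. Using \eqref{alphaokonG/N} applied to the action $L(u)$, namely $L(u)^{G/N}_{tN}(E^N(c))=\Delta_{G,N}(t)\,E^N(L(u)_t(c))$, the computation to carry out is, for $c\in L(X)_0$,
\begin{align*}
L(v)^{G/N}_{tN}\bigl(L(\Theta)^N(E^N(c))\bigr)
&=L(v)^{G/N}_{tN}\bigl(E^N(L(\Theta)(c))\bigr)
=\Delta_{G,N}(t)\,E^N\bigl(L(v)_t(L(\Theta)(c))\bigr)\\
&=\Delta_{G,N}(t)\,E^N\bigl(L(\Theta)(L(u)_t(c))\bigr)
=L(\Theta)^N\bigl(\Delta_{G,N}(t)\,E^N(L(u)_t(c))\bigr)\\
&=L(\Theta)^N\bigl(L(u)^{G/N}_{tN}(E^N(c))\bigr),
\end{align*}
where the third equality uses that $L(\Theta)$ intertwines $L(u)$ and $L(v)$. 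By continuity $L(\Theta)^N$ intertwines $L(u)^{G/N}$ and $L(v)^{G/N}$, and restricting to the corner shows $\Theta^N$ intertwines $u^{G/N}$ and $v^{G/N}$, giving the required isomorphism of right-Hilbert $(A^{\alpha|N},\alpha^{G/N})$\,--\,$(B^{\beta|N},\beta^{G/N})$ bimodules.

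The main obstacle is this final equivariance step: once the linking-algebra setup is in place, everything else is bookkeeping that invokes the already-established functoriality of $\Fix$, so the real content is the short calculation above, where one must take care that the modular factors $\Delta_{G,N}(t)$ cancel and that \eqref{alphaokonG/N} is applied to the single linking-algebra action $L(u)$ rather than to $\alpha$, $\beta$ and $u$ separately.
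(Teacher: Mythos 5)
Your proposal is correct and follows essentially the same route as the paper: pass to the linking algebras, where the bimodule isomorphism becomes a $*$-isomorphism $L(X)\to L(Y)$ intertwining $L(u)$ and $L(v)$ and respecting the $C_0(T)$-embeddings, apply the already-established fixing construction to its multiplier extension, and restrict to the top-right corner. The only difference is cosmetic: where you verify $G/N$-equivariance by the explicit computation with \eqref{alphaokonG/N} (in which the $\Delta_{G,N}(t)$ factors cancel), the paper gets it in one line by noting that $u^{G/N}$ and $v^{G/N}$ are by definition restrictions of (the extensions of) $u$ and $v$ to the fixed-point modules, so equivariance of the restricted map is immediate from the $L(u)$\,--\,$L(v)$ equivariance of the linking-algebra isomorphism.
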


\begin{proof}
Suppose that $\Psi:(X,u)\to (Y,v)$ is an isomorphism, and $\rho:\K(X)\to \K(Y)$ is the isomorphism such that $\rho(\Theta_{x,w})=\Theta_{\Psi(x),\Psi(w)}$. It  is shown in the proof of \cite[Lemma~3.2]{aHKRWproper-n} that $\rho$, $\Psi$ and $\id_B$ give an isomorphism $\Psi_L:L(X)\to L(Y)$, and that the top-right corner of $\overline{\Psi_L}$ carries $\Fix(X,u)$ isomorphically onto $\Fix(Y,v)$. Since $\Psi_L$ is $L(u)$--$L(v)$ equivariant, and the actions $u^{G/N}$ and $v^{G/N}$ are the restrictions of $u$ and $v$ to the fixed-point modules, this isomorphism of $\Fix(X,u)$ onto $\Fix(Y,v)$ is equivariant.
\end{proof}

We have now proved that the assignments \eqref{deffixgn} are well-defined, and it remains to prove that they define a functor $\Fix^{G/N}_N$. The argument in the second paragraph of \cite[\S4.3]{aHKRWproper-n} shows that fixing the identity at an object $(A,\alpha,\phi)$ gives the identity 
$_{A^{\alpha|N}}(A^{\alpha|N})_{A^{\alpha|N}}$ at the object $A^{\alpha|N}$ in $\CC$, and adding the action $\alpha^{G/N}$ throughout shows that $\Fix^{G/N}_N$ takes $(A,\alpha,\phi)$ to the identity morphism at $\Fix_N^{G/N}(A,\alpha,\phi)$. Proving that $\Fix$ preserves composition was the hard bit of \cite[Theorem~3.3]{aHKRWproper-n}, and we have to check that all the isomorphisms constructed in its proof can be made equivariant. The next two lemmas are equivariant versions of \cite[Proposition~4.1]{aHKRWproper-n} and \cite[Theorem~4.5]{aHKRWproper-n}.

\begin{lemma}
  \label{lem-prop4.1}
Suppose that $(K,\mu,\phi_K)$,
  $(B,\beta,\psi)$ and $(C,\gamma, \zeta)$ are objects in $\Aa(G, (C_0(T),\rt))$, that
  ${}_{(K,\mu)}(X,u)_{(B,\beta)}$ is an imprimitivity bimodule, and
  that $\sigma:B\to M(C)$ is a nondegenerate homomorphism which is $\beta$\,--\,$\gamma$ equivariant and satsifies $\sigma\circ\psi=\zeta$.  Then
  \begin{equation*}
    \bigl((\Fix(X,{u}|N)\tensor_{B^{\beta|N}}C^{\gamma|N},
    u^{G/N}\tensor \gamma^{G/N}\bigr) \cong
    \bigl(\Fix(X\tensor_{B}C, {(u\tensor\gamma)}|N) ,
    (u\tensor\gamma)^{G/N}\bigr) 
  \end{equation*}
  as right-Hilbert $(K^\mu,\mu^{G/N})$\,--\,$(
  C^{\gamma},\gamma^{G/N})$ bimodules.
\end{lemma}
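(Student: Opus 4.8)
The plan is to take the right-Hilbert $K^\mu$\,--\,$C^\gamma$ bimodule isomorphism
\[
\Theta\colon \Fix(X,u|N)\tensor_{B^{\beta|N}}C^{\gamma|N}\longrightarrow \Fix(X\tensor_B C,(u\tensor\gamma)|N)
\]
produced by \cite[Proposition~4.1]{aHKRWproper-n} (applied not to all of $G$ but to the free and proper action of the subgroup $N$, with its restricted actions) and to verify that, once every module in sight is equipped with its $G/N$-action, $\Theta$ intertwines $u^{G/N}\tensor\gamma^{G/N}$ and $(u\tensor\gamma)^{G/N}$. The underlying bimodule isomorphism is already in hand, so the only new content is this equivariance, and the proof therefore follows the template of Lemma~\ref{lem-fixgnn}, where well-definedness was upgraded to equivariance by exactly the same device of ``adding the actions throughout''.

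The main thing to exploit is that all of the $G/N$-actions occurring here are, by construction, restrictions of the ambient $G$-actions to the relevant $N$-fixed-point modules: $u^{G/N}_{tN}$ is $u_t$ restricted to $\Fix(X,u|N)$, $\gamma^{G/N}_{tN}$ is $\gamma_t$ restricted to $C^{\gamma|N}$, and $(u\tensor\gamma)^{G/N}_{tN}$ is $(u_t\tensor\gamma_t)$ restricted to $\Fix(X\tensor_B C,(u\tensor\gamma)|N)$. So I would first recall how $\Theta$ is assembled in the proof of \cite[Proposition~4.1]{aHKRWproper-n}: it is built from the canonical identifications used throughout \cite[\S4]{aHKRWproper-n}, namely the decomposition of \cite[Corollary~2.3]{aHKRWproper-n}, the linking algebras $L(X)$ and $L(X\tensor_B C)$ carrying the actions $L(u)$ and $L(u\tensor\gamma)$, and the fixing maps on their corners. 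Each of these ingredients is a map between objects carrying genuine $G$-actions and is $G$-equivariant for those actions; hence its restriction to the $N$-fixed-point modules is automatically equivariant for the restricted $G/N$-actions. Threading $\Theta$ through these identifications then shows that it intertwines the two $G/N$-actions, and the asserted isomorphism of right-Hilbert $(K^\mu,\mu^{G/N})$\,--\,$(C^\gamma,\gamma^{G/N})$ bimodules follows.

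The step I expect to be the main obstacle is arranging the argument so that equivariance is \emph{inherited} rather than \emph{computed}. If instead one verifies the intertwining property directly on the dense set of elementary tensors $\Phi^N(x)\tensor E^N(c)$, where $\Phi^N$ is the module fixing map obtained as the top-right corner of $E^{L(u)|N}$, then one runs straight into delicate bookkeeping of the modular functions $\Delta_{G,N}$: the relevant relation is the module analogue of \eqref{alphaokonG/N}, namely $u_t(\Phi^N(x))=\Delta_{G,N}(t)\Phi^N(u_t(x))$ together with its counterparts for $\gamma$ and for $u\tensor\gamma$, and the factors contributed by fixing on the two tensor factors on the left must be reconciled against the single factor contributed on the right. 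Keeping these straight depends on using the precise form of $\Theta$ rather than a naive guess, which is exactly why the linking-algebra packaging above is worthwhile: since every $\Delta_{G,N}$ ultimately descends from the single relation \eqref{alphaokonG/N}, realizing $\Theta$ as a restriction of $G$-equivariant maps makes these factors cancel automatically. Once the intertwining identity is secured on elementary tensors, continuity of the actions and density extend it to all of $\Fix(X,u|N)\tensor_{B^{\beta|N}}C^{\gamma|N}$, completing the proof.
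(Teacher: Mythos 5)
Your proposal is correct and follows essentially the same route as the paper: the paper also realises the isomorphism via the linking-algebra homomorphism $\Phi_L$ of \cite[Proposition~4.1]{aHKRWproper-n}, restricts it through Proposition~\ref{newkqr} to get an equivariant map $(\Phi_L)_N$ on $N$-fixed-point algebras, and then deduces equivariance of the resulting isomorphism $\Omega(m\otimes d)=\Psi(m)\cdot d$ from a short matrix computation in the linking algebra---precisely your ``inherited rather than computed'' strategy, including the observation that this packaging makes the $\Delta_{G,N}$ bookkeeping a non-issue.
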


\begin{proof}
 In  \cite[Proposition~4.1]{aHKRWproper-n} we found
  a nondegenerate homomorphism
  \begin{equation*}
    \Phi_{L}:\bigl(L(X),L(u)\bigr)\to
    \bigl(M(L(X\tensor_{B}C)),L(u\tensor\gamma)\bigr)
  \end{equation*}
  which is a morphism in the comma category
  $\bigl(C_{0}(T),\rt\bigr)\downarrow \Aa_{\nd}(G)$, and hence also in
  $\bigl(C_{0}(T),\rt\bigr)\downarrow \Aa_{\nd}(N)$, and is
  $L(u)$\,--\,$L(u\otimes\gamma)$ equivariant. 
  Proposition~\ref{newkqr} gives a nondegenerate homomorphism $(\Phi_L)_N$ of
  $L(X)^{L(u)|N}=L(X^{u|N})$ into $M(L(X\tensor_{B}C)^{L(u\tensor\gamma)|N})=M(L((X\otimes_BC)^{(u\otimes\gamma)|N}))$, and applying Lemma~4.3 of \cite{aHKRWproper-n} gave the isomorphism $\Omega$ of \cite[Proposition~4.1]{aHKRWproper-n}: if $\Psi$ denotes the top right-hand corner of $(\Phi_L)_N$, then  $\Omega$ is given by $\Omega(m\otimes d)=\Psi(m)\cdot d$, where the action is that of $C^{\gamma|N}$ on $(X\otimes_BC)^{(u\otimes\gamma)|N}$, which is by definition induced by matrix multiplication in  $
  M(L((X\otimes_BC)^{(u\otimes\gamma)|N})$. The homomorphism $(\Phi_L)_N$ is $L(u)^{G/N}$--$L(u\otimes\gamma)^{G/N}$ equivariant, and then the calculation
  \begin{align*}
    &\begin{pmatrix}
      0& \Psi(u_{tN}^{G/N}(m)) \cdot
      \gamma_{tH}^{G/N}(d) \\ 0&0
    \end{pmatrix}=(\Phi_L)_N\Bigl(
    \begin{pmatrix}
      0&u_{tN}^{G/N}(m)\\0&0
    \end{pmatrix}\Bigr)
    \begin{pmatrix}
      0&0\\0&\gamma_{tN}^{G/N}(d)
    \end{pmatrix}
    \\
    &\hskip1truein= L(u\tensor\gamma)_{tN}^{G/N}\Bigl( \ (\Phi_L)_N\Bigl(\begin{pmatrix}
      0&m\\0&0
    \end{pmatrix}
    \Bigr)\Bigr)L(u\tensor\gamma)_{tN}^{G/N} \Bigl(
    \begin{pmatrix}
      0&0\\0&d
    \end{pmatrix}
    \Bigr) \\
    &\hskip1truein=L(u\tensor\gamma)_{tN}^{G/N}\Bigl(
    \begin{pmatrix}
      0&\Omega(m\tensor d)\\0&0
    \end{pmatrix}
    \Bigr)
  \end{align*}
shows that
  \begin{equation*}
    \Omega\bigl(u_{tN}^{G/N}(m)\tensor\gamma_{tN}^{G/N}(d)\bigr) =
    \Psi\bigl(u_{tH}^{G/N}(m)\bigr) \cdot \gamma_{tN}^{G/N}(d) =
    (u\tensor\gamma)_{tH}^{G/N}\bigl(\Omega(m\tensor d)\bigr).\qedhere
  \end{equation*}
\end{proof}

\vfill\eject

\begin{lemma}
  \label{lem-thm4.5}
  Suppose that $(A,\alpha,\phi)$,
  $(B,\beta,\psi)$ and $(C,\gamma,\zeta)$ are objects in
  $\Aa(G,(C_{0}(T),\rt))$,  and that
  $_{(A,\alpha)}(X,u)_{(B,\beta)}$ and
  $_{(B,\beta)}(Y,v)_{(C,\gamma)}$ are \ib s.  Then
  \[
  \bigl(X^{u|N}\tensor_{B^{\beta|N}}Y^{v|N},u^{G/N}\tensor
  v^{G/N}\bigr)\cong \bigl((X\tensor_{B}Y)^{(u\tensor
    v)|N}, (u\tensor v)^{G/N}\bigr)
    \] as
  $(A^{\alpha},\alpha^{G/N})$\,--\,$ (C^{\gamma},\gamma^{G/N})$ \ib s.
\end{lemma}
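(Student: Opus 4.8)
The plan is to obtain the underlying isomorphism of right-Hilbert bimodules directly from the non-equivariant statement \cite[Theorem~4.5]{aHKRWproper-n}, so that the only real work is to check that this isomorphism intertwines the actions $u^{G/N}\tensor v^{G/N}$ and $(u\tensor v)^{G/N}$. As in Lemma~\ref{lem-prop4.1}, the key structural fact I would exploit is that each of the actions $u^{G/N}$, $v^{G/N}$ and $(u\tensor v)^{G/N}$ is, by its construction on page~\pageref{defmorphism}, the restriction to the appropriate $N$-fixed-point corner of the corresponding linking action. Thus equivariance will follow as soon as the isomorphism is realised inside a single $C^*$-algebra on which $G/N$ acts by automorphisms.

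To set this up, I would assemble $X$, $Y$ and $X\tensor_B Y$ into the three-object linking algebra
\[
\mathcal{L}=\begin{pmatrix} A & X & X\tensor_B Y\\ \widetilde X & B & Y\\ \widetilde{X\tensor_B Y} & \widetilde Y & C\end{pmatrix},
\]
whose multiplication is built from the bimodule actions and the canonical isomorphisms such as $\widetilde X\tensor_A X\cong B$. The diagonal actions $\alpha,\beta,\gamma$ together with the off-diagonal actions $u$, $v$ and $u\tensor v$ combine into a single action $\nu$ of $G$ on $\mathcal{L}$, and the diagonal embedding $\phi\oplus\psi\oplus\zeta$ makes $(\mathcal{L},\nu,\phi\oplus\psi\oplus\zeta)$ an object of $\Aa(G,(C_0(T),\rt))$. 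The three $2\times2$ blocks of $\mathcal{L}$ are exactly the linking algebras $L(X)$, $L(Y)$ and $L(X\tensor_B Y)$, carrying the actions $L(u)$, $L(v)$ and $L(u\tensor v)$.

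Next I would apply $\Fix$ relative to $N$. Because $E^N$ is computed block-by-block, $\Fix(\mathcal{L},\nu|N,\phi\oplus\psi\oplus\zeta)$ is again a matrix algebra whose diagonal corners are $A^{\alpha|N}$, $B^{\beta|N}$, $C^{\gamma|N}$ and whose $(1,2)$-, $(2,3)$- and $(1,3)$-corners are $X^{u|N}$, $Y^{v|N}$ and $(X\tensor_B Y)^{(u\tensor v)|N}$, respectively (this is Proposition~3.1 of \cite{aHKRWproper-n} applied to each block). Multiplying the $(1,2)$-corner into the $(2,3)$-corner lands in the $(1,3)$-corner and is $B^{\beta|N}$-balanced, so it factors through a map $\Theta\colon X^{u|N}\tensor_{B^{\beta|N}} Y^{v|N}\to (X\tensor_B Y)^{(u\tensor v)|N}$, and the content of \cite[Theorem~4.5]{aHKRWproper-n} is that $\Theta$ is an isomorphism of \ib s. Finally, exactly as $\alpha^{G/N}$ was produced from $\alpha$, the action $\nu$ descends to an action $\nu^{G/N}$ of $G/N$ on $\Fix(\mathcal{L},\nu|N)$ by automorphisms, and on the corners $\nu^{G/N}$ restricts to $u^{G/N}$, $v^{G/N}$ and $(u\tensor v)^{G/N}$. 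Since $\Theta$ is implemented by multiplication inside $\Fix(\mathcal{L},\nu|N)$ and each $\nu^{G/N}_{tN}$ is a homomorphism, the computation
\[
\Theta\bigl(u^{G/N}_{tN}(\xi)\tensor v^{G/N}_{tN}(\eta)\bigr)=\nu^{G/N}_{tN}(\xi)\,\nu^{G/N}_{tN}(\eta)=\nu^{G/N}_{tN}(\xi\eta)=(u\tensor v)^{G/N}_{tN}\bigl(\Theta(\xi\tensor\eta)\bigr)
\]
shows that $\Theta$ is $G/N$-equivariant, as required.

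The main obstacle is not the final computation, which is formal, but the bookkeeping needed to make it legitimate: I must verify that $\mathcal{L}$ with the diagonal embedding really is an object of the semi-comma category, that fixing over $N$ commutes with passage to the $2\times2$ sub-blocks (so that the corners of $\Fix(\mathcal{L},\nu|N)$ are the expected fixed-point modules and its sub-blocks are the fixed linking algebras $L(X)^{L(u)|N}$ and so on), and---most delicately---that the corner-restrictions of $\nu^{G/N}$ agree on the nose with the actions $u^{G/N}$, $v^{G/N}$, $(u\tensor v)^{G/N}$ as defined via the separate linking algebras. Granting these identifications, which are of the same routine-but-careful flavour as the matrix manipulations in the proof of Lemma~\ref{lem-prop4.1}, equivariance of $\Theta$ is immediate.
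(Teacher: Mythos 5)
Your proposal is correct and follows essentially the same route as the paper: the paper also assembles the three algebras and three bimodules into the $3\times 3$ linking algebra $F=\mathcal{K}((X\tensor_B Y)\oplus Y\oplus C)$ carrying a combined $G$-action $\eta$ with embedding $\phi\oplus\psi\oplus\zeta$, realises the isomorphism as multiplication of the $(1,2)$- and $(2,3)$-corners inside $F^{\eta|N}$, and deduces equivariance from the fact that $\eta^{G/N}_{tN}$ is an automorphism restricting to $u^{G/N}$, $v^{G/N}$ and $(u\tensor v)^{G/N}$ on the corners. The bookkeeping identifications you flag as the remaining work are exactly what the paper inherits from the non-equivariant proof of Theorem~4.5 of \cite{aHKRWproper-n}.
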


\begin{proof}
  In the proof of \cite[Theorem~4.5]{aHKRWproper-n}, we identify
  $\mathcal{K}((X\tensor_{B}Y)\oplus Y \oplus C)$ with
    \begin{equation*}
    F=
    \begin{pmatrix}
      A&X&X\tensor_{B} Y\\ * & B & Y \\ * & * & C
    \end{pmatrix},
  \end{equation*}
  and show that the actions $\alpha$, $\beta$ $\gamma$, $u$, $v$ and $u\otimes v$ combine to give an action $\eta$ of $G$ on $F$ such that $(F,\eta,\phi\oplus\psi\oplus\zeta)$ is an object in $\Aa(G,(C_{0}(T),\rt))$.  Then
  \begin{equation*}
    \eta^{G/N}=
    \begin{pmatrix}
      \alpha^{G/N}&u^{G/N}&(u\tensor v)^{G/N} \\ * & \beta^{G/N} &
      v^{G/N} \\ * & * & \gamma^{G/N}
    \end{pmatrix}
  \end{equation*}
  acts on
  \begin{equation*}
    F^{\eta|N} =
    \begin{pmatrix}
      A^{\alpha|N}& X^{u|N}&(X\tensor_{B}Y)^{(u\tensor v)|N} \\ * &
      B^{\beta|N} & Y^{v|N} \\ * & * & C^{\gamma|N}
    \end{pmatrix}.
  \end{equation*}
  The isomorphism of $X^{u|N}\tensor_{B^{\beta|N}} Y^{v|N}$ onto
  $(X\tensor_{B}Y)^{(u\tensor v)|N}$ is given by $x\tensor y\mapsto
  xy$, where $xy$ is by definiton the top right entry in the product in $F^{\eta|N}$ of
  \begin{equation}\label{eq:2}
    \begin{pmatrix}
      0&x&0\\0&0&0\\0&0&0
    \end{pmatrix}\quad\text{ and }\quad
    \begin{pmatrix}
      0&0&0\\0&0&y\\0&0&0
    \end{pmatrix}.
  \end{equation}
Thus the isomorphism takes $u_{tN}^{G/N}(x)\tensor v_{tN}^{G/N}(y)$ to the top-right entry of the image of the product of \eqref{eq:2} under $\eta_{tN}^{G/N}$, which is $(u\tensor v)_{tN}^{G/N}(xy)$, as required.
\end{proof}

\begin{proof}[End of the proof of Proposition~\ref{prop-actG/N}]
Suppose $(X,u)$ is a Hilbert
$(A,\alpha)$\,--\,$(B,\beta)$ bimodule  and $(Y,v)$ is a Hilbert
$(B,\beta)$\,--\,$(C,\gamma)$ bimodule.  
Now we need to  follow the proof of \cite[Theorem~3.3]{aHKRWproper-n} to see how the isomorphism there is defined.  We start with the tensor product
    \begin{equation*}
      \bigl(\Fix(X,u|N),u^{G/N}\bigr) \tensor_{B^{\beta|N}}
      \bigl(\Fix(Y,v|N), v^{G/N}\bigr),
    \end{equation*}
and use \cite[Corollary~2.3]{aHKRWproper-n} to factor this
equivariantly as
\begin{equation*}
  (\K(X^{u|N}),\mu^{G/N}) \tensor_{\K(X^{u|N})}
  (X^{u|N},u^{G/N}) \tensor_{B^{\beta|N}}
  (\K(Y^{v|N}),\rho^{G/N}) \tensor_{\K(Y^{v|N})}
  (Y^{v|N},v^{G/N}) .
\end{equation*}
Lemma~\ref{lem-prop4.1} implies that the above is equivariantly
isomorphic to
\begin{equation*}
(\K(X^{u|N}),\mu^{G/N}) \tensor_{\K(X^{u|N})}
  \bigl((X\tensor_{B}\K(Y))^{(u\tensor\rho)|N},(u\tensor
  \rho)^{G/N}\bigr)  \tensor_{\K(Y^{v|N})}
  (Y^{v|N},v^{G/N}) .
\end{equation*}
By Lemma~\ref{lem-thm4.5}, this is equivariantly isomorphic to
\begin{equation*}
  \bigl(\K(X^{u|N}),\mu^{G/N}\bigr) \tensor_{\K(X^{u|N})}
  \bigl((X\tensor_{B}\K(Y)\tensor _{\K(Y)} Y)^{(u\tensor\rho\tensor
    v)|N}, (u\tensor\rho\tensor v)^{G/N}\bigr) .
\end{equation*}
In the proof of \cite[Theorem~3.3]{aHKRWproper-n}, we provide a right-Hilbert module isomorphism
\begin{equation*}
  \theta|:(X\tensor_{B}\K(Y)\tensor_{\K(Y)}Y)^{(u\tensor\rho\tensor
    v)|N}\to (X\tensor_{B}Y)^{(u\tensor v)|N},
\end{equation*}
and  it is enough to prove that $\theta|$ is
$(u\tensor\rho\tensor v)^{G/N}$\,--\,$(u\tensor v)^{G/N}$ equivariant.  

Recall that $\theta|$ is defined as follows.  We first let
$\theta:X\tensor_{B}\K(Y)\tensor_{\K(Y)}Y\to X\tensor_{B}Y$ be
the right $(C,\gamma)$-homomorphism given by $(x\tensor \Theta \tensor
y) \mapsto x\tensor \Theta(y)$.  Then 
\begin{equation*}
  L(\theta):=
  \begin{pmatrix}
    \Ad \theta&\theta\\ * & \id
  \end{pmatrix}
\end{equation*}
is an isomorphism in
$\Aa(G,(C_{0}(T),\rt))$ which maps $\bigl(L(X\tensor_{B}\K(Y)\tensor_{\K(Y)}Y),L(u\tensor
\rho\tensor v)\bigr)$ onto $\bigl(L(X\tensor_{B}Y),L(u\tensor
v)\bigr)$.  Then Proposition~\ref{newkqr} gives an isomorphism
$L(\theta)|$ on fixed-point algebras, and $\theta|$ is the top right-hand corner of $L(\theta)|$. The restriction $L(\theta)|$ is $L(u\tensor\rho\tensor v)^{G/N}$\,--\,$L(u\tensor
v)^{G/N}$ equivariant, and this equivariance implies that $\theta|$ is
$(u\tensor\rho\tensor v)^{G/N}$\,--\,$(u\tensor v)^{G/N}$ equivariant, as required.
\end{proof}

\begin{thm}\label{thm-newfix}
Suppose that a locally compact group $G$ acts freely and properly on a locally compact space $T$, and $N$ is a closed normal subgroup of $G$. For every object $(A,\alpha,\phi)$ in the  semi-comma category $\Aa(G,(C_0(T),\rt))$, the injection $\overline{\iota_N}$ of $M(A^{\alpha|N})$ in $M(A)$ described in Corollary~\ref{fixKQR} restricts to an isomorphism $\iota_{G,N}=\iota_{G,N}(A,\alpha,\phi)$ of  $\Fix(A^{\alpha|N},\alpha^{G/N},\phi_N)$ onto $A^\alpha:=\Fix(A,G,\phi)$, and  these isomorphisms give a natural isomorphism between $\Fix_{G/N}\circ \Fix_N^{G/N}$ and $\Fix$.
\end{thm}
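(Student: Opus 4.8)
The plan is to prove the object-level statement first—that $\overline{\iota_N}$ carries $\Fix(A^{\alpha|N},\alpha^{G/N},\phi_N)$ onto $A^\alpha$—and then upgrade it to naturality. The heart of the object-level claim is the single ``integrate over $N$, then over $G/N$, equals integrate over $G$'' identity
\[
\overline{\iota_N}\bigl(E^{G/N}(\phi_N(f)E^N(a)\phi_N(g))\bigr)=E^G\bigl(\overline\phi(f)\,a\,\overline\phi(g)\bigr),\qquad a\in A_0,\ f,g\in C_c(T/N).
\]
Note first that $\phi_N(f)E^N(a)\phi_N(g)$ lies in the ``$_0$'' subalgebra of $(A^{\alpha|N},\alpha^{G/N},\phi_N)$ automatically, since $E^N(a)\in A^{\alpha|N}$ is sandwiched by $\phi_N(C_c(T/N))$, so no domain question arises. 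I would prove the identity by testing against $A$: for $b_1,b_2\in A$ and $\omega\in A^*$, expand $\omega(b_1E^G(\overline\phi(f)a\overline\phi(g))b_2)=\int_G\omega(b_1\alpha_s(\overline\phi(f)a\overline\phi(g))b_2)\,ds$, split using Weil's formula $\int_G=\int_{G/N}\int_N$ for compatibly normalised Haar measures, and use that $f,g\in C_0(T/N)$ are right-$N$-invariant so $\rt_{sn}f=\rt_sf$; the inner integral collapses to $\alpha_s(E^N(a))$ by the defining property of $E^N$, and the outer integral is exactly the defining integral for $E^{G/N}$ transported through $\overline{\iota_N}$. Sandwiching by $b_1,b_2$ turns $\omega$ into a genuine functional on $A^{\alpha|N}$, which sidesteps the usual multiplier/functional subtlety.

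Granting the identity, surjectivity onto $A^\alpha$ is clean: choosing $f,g\in C_c(T/N)$ equal to $1$ on the ($N$-saturated, hence compact-mod-$N$) supports of the generators $hbk$ of $A_0$ gives $\overline\phi(f)(hbk)\overline\phi(g)=hbk$, so every $E^G(a)$ ($a\in A_0$) lies in the image $\overline{\iota_N}(\Fix(A^{\alpha|N},\alpha^{G/N},\phi_N))$, which is closed since $\overline{\iota_N}$ is an injective, hence isometric, $*$-homomorphism by Corollary~\ref{fixKQR}; thus the image contains $A^\alpha=\overline{E^G(A_0)}$. For the reverse inclusion I would argue that the special elements $\phi_N(f)E^N(a)\phi_N(g)$ are inductive-limit dense in the ``$_0$'' subalgebra (using that $E^N(A_0)$ is dense in $A^{\alpha|N}$) and that $E^{G/N}$ is continuous from the inductive-limit topology to the norm—its effective domain of integration over $G/N$ being compact by properness, exactly as in the proof of Proposition~\ref{nondegeneracy}; then $E^{G/N}$ of the special elements is norm-dense in $\Fix(A^{\alpha|N},\alpha^{G/N},\phi_N)$, and the identity forces $\overline{\iota_N}(\Fix(A^{\alpha|N},\alpha^{G/N},\phi_N))\subseteq A^\alpha$. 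Injectivity of $\iota_{G,N}$ is inherited from $\overline{\iota_N}$.

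For naturality I would use that every morphism $[X,u]$ factors, by the canonical decomposition (Corollary~2.3 of \cite{aHKRWproper-n}), as an equivariant isomorphism followed by a morphism from a nondegenerate homomorphism, so it suffices to treat the two cases. For a nondegenerate homomorphism $\sigma$ the naturality square is one of honest homomorphisms, and I would verify it by composing into $M(B)$ and chaining the three instances of Proposition~\ref{newkqr}: over $N$, giving $\overline{\iota_N^B}\circ\sigma_N=\overline\sigma\circ\iota_N^A$; over $G/N$, giving $\overline{\iota_{G/N}^{B^{\beta|N}}}\circ(\sigma_N)_{G/N}=\overline{\sigma_N}\circ\iota_{G/N}^{A^{\alpha|N}}$; and over $G$, giving $\overline{\iota_G^B}\circ\sigma_G=\overline\sigma\circ\iota_G^A$, together with the defining relation $\iota_G^A\circ\iota_{G,N}(A)=\overline{\iota_N^A}\circ\iota_{G/N}^{A^{\alpha|N}}$; since $\overline{\iota_G^B}$ is injective by Corollary~\ref{fixKQR}, the square commutes. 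For an isomorphism $[X,u]$ I would avoid a direct bimodule computation by applying the object-level result to the linking-algebra object $(L(X),L(u),\phi_\K\oplus\psi)$: because the diagonal projections are $L(u)$-fixed, $\overline{\iota_N^{L(X)}}$ respects the corner grading, and the single isomorphism $\iota_{G,N}(L(X))$ restricts on the corners to $\iota_{G,N}(A)$ and $\iota_{G,N}(B)$ and, on the off-diagonal corner, to precisely the bimodule isomorphism witnessing commutativity of the square in $\CC$.

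The main obstacle is the object-level reverse inclusion: showing that the tractable special elements $\phi_N(f)E^N(a)\phi_N(g)$ exhaust $\Fix(A^{\alpha|N},\alpha^{G/N},\phi_N)$ after applying $E^{G/N}$. This is delicate because the domain of $E^{G/N}$ is not norm-closed, so one cannot simply pass to limits, and it hinges on the inductive-limit continuity of the expectation. The forward inclusion, together with the naturality bookkeeping, is by comparison routine once the key identity is established.
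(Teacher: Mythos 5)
Your proposal is correct and follows essentially the same route as the paper: the same key identity $E^G(a)=\overline{\iota_N}\bigl(E^{G/N}(E^N(a))\bigr)$ on $A_0$ established via the decomposition $\int_G=\int_{G/N}\int_N$, surjectivity from closed range of the injective $*$-homomorphism, the reverse inclusion from norm-continuity of $b\mapsto E^{G/N}(\phi_N(h)b\phi_N(k))$ together with density of $E^N(A_0)$ in $A^{\alpha|N}$, and naturality handled through the canonical decomposition, with the homomorphism case done by chaining Proposition~\ref{newkqr} and extending strictly, and the imprimitivity-bimodule case by applying the object-level result to the linking algebra and restricting to corners. The only (cosmetic) difference is that you verify the key identity weakly against functionals $\omega(b_1\cdot b_2)$, whereas the paper computes $\phi(f)E^G(a)$ as a norm-convergent integral and invokes nondegeneracy of $\phi$.
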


\begin{proof}
Corollary~\ref{fixKQR} implies that $\overline{\iota_N}$ is an injection, so for the first part it suffices to check that $\overline{\iota_N}$ maps $\Fix_{G/N}(A^{\alpha|N},\alpha^{G/N},\phi_N)$ onto $\Fix(A,\alpha,\phi)$. Let $a\in A_0$. We start by showing that  $E^N(a)$ belongs to the domain $(A^{\alpha|N})_0$ of $E^{G/N}$. Choose $f,g\in C_c(T)$ such that $a=\phi(f)a\phi(g)$, and  $h,k\in C_c(T/N)$ such that $h\equiv 1$ on $(\supp f)/N$ and $k\equiv 1$ on $(\supp g)/N$. Then $\overline{\phi}(h)$ and  $\overline{\phi}(k)$ are $\alpha|N$-invariant, so
\begin{align*}
E^N(\phi(f)a\phi(g))&=\iota_N\big(E^N(\phi(hf)a\phi(gk))\big)=\iota_N\big(\overline{\phi}(h)E^N(\phi(f)a\phi(g))\overline{\phi}(k)\big)\\
&=\phi_N(h)\iota_N\big(E^N(\phi(f)a\phi(g))\big)\phi_N(k)\\
&=\phi_N(h)E^N(\phi(f)a\phi(g))\phi_N(k)
\end{align*} 
belongs to $(A^{\alpha|N})_0=\phi_N(C_c(T/N))A^{\alpha|N}\phi_N(C_c(T/N))$, and $E^{G/N}(E^N(a))$ makes sense.

Next we compute $E^G(a)$. Let $f\in C_c(T)$, and again choose $h\in C_c(T/N)$ such that $h\equiv 1$ on $(\supp f)/N$. Several applications of  \cite[Lemma 2.2]{kqrproper} show that $\phi(f)E^G(a)$ is given by the norm-convergent $A$-valued integrals
\begin{align}
\phi(f)E^G(a)
&=\int_G \phi(f)\alpha_t(a)\, dt
=\int_{G/N}\int_N \phi(f)\alpha_{tn}(a)\, dn\, d(tN)\notag\\
&=\int_{G/N}\alpha_t\left( \int_N\alpha_t^{-1}(\phi(f))\alpha_n(a)\, dn \right)\, d(tN)\notag\\
&=\int_{G/N}\alpha_t\left(\phi(\rt_t^{-1}(f)) E^N(a)\right)\, d(tN)\notag=\int_{G/N} \phi(f)\alpha_t(E^N(a))\, d(tN)\notag\\
&=\int_{G/N} \phi(f)\overline{\phi}(h)\alpha^{G/N}_{tN}(E^N(a))\, d(tN)\notag\\
&=\phi(f)\int_{G/N}\overline{\iota_N}(\phi_N(h))\alpha^{G/N}_{tN}(E^N(a))\, d(tN).\label{normcvgtok}
\end{align}
Since $E^N(a)$ belongs to $A^{\alpha|N}$, \eqref{normcvgtok} is the product in $M(A)$ of $\phi(f)$ with the image under $\iota_N$ of the norm-convergent $A^{\alpha|N}$-valued integral 
\[
\int_{G/N}\phi_N(h)\alpha^{G/N}_{tN}(E^N(a))\, d(tN)=\phi_N(h)E^{G/N}(E^N(a)).
\] 
Thus
\begin{align*}
\phi(f)E^G(a)&=\phi(f)\iota_N\big(\phi_N(h)E^{G/N}(E^N(a))\big)\\
&=\phi(f)\overline{\phi}(h)\overline{\iota_N}(E^{G/N}(E^N(a)))\\
&=\phi(f)\overline{\iota_N}(E^{G/N}(E^N(a))).
\end{align*}
Since $\phi$ is nondegenerate, this implies that
\begin{equation}\label{eqnforfixfix}
E^G(a)=\overline{\iota_N}(E^{G/N}(E^N(a)))=\iota_{G,N}(E^{G/N}(E^N(a)))\ \text{ for all $a\in A_0$.}
\end{equation}

Since
$\Fix(A^{\alpha|N},\alpha^{G/N},\phi_N)$
is by definition
\begin{equation*}
\clsp\{E^{G/N}(\phi_N(h)b\phi_N(k)):b\in A^{\alpha|N}, h,k\in C_c(T/N)\},
\end{equation*}
and since $E^N(A_0)$ is dense in $A^{\alpha|N}$, the norm continuity of $b\mapsto E^{G/N}(\phi_N(h)b\phi_N(k))$ and the argument in the first paragraph of the proof imply that 
\[
\Fix(A^{\alpha|N},\alpha^{G/N},\phi_N)=\clsp\{E^{G/N}(E^N(a))):a\in A_0\}.
\]
Thus we can deduce from Equation~\ref{eqnforfixfix}, first, that $\overline{\iota_N}(\Fix(A^{\alpha|N}))$ is contained in $\Fix(A,\alpha,\phi)$, and, second, that it is dense in $\Fix(A,\alpha,\phi)$. Since $\overline{\iota_N}$ is a homomorphism between $C^*$-algebras, it has closed range, and hence it is onto, as required.

To establish naturality, we consider a right-Hilbert $(A,\alpha)$\,--\,$(B,\beta)$ bimodule $(X,u)$ and its canonical factorisation, as in  \cite[Corollary~2.3]{aHKRWproper-n}:
\begin{equation*}  \xymatrix{
    (A,\alpha,\phi_A)    \ar[r]^-{\kappa}
    &(\K,\mu,\phi_\K)
    \ar[r]^-{X}
    &(B,\beta,\phi_B).
  }
\end{equation*}
Next, we apply Proposition~\ref{newkqr} iteratively to get maps $\kappa_N:A^{\alpha|N}\to M(\K^{\mu|N})$ and $(\kappa_N)_{G/N}:\Fix(A^{\alpha|N})\to M(\Fix(\K^{\mu|N}))$.
We need to prove that the following diagram commutes:
\begin{equation*}
  \xymatrix{
    \Fix(A^{\alpha|N},\alpha^{G/N})
    \ar[rrrr]^-{\iota_{G,N}(A)}
    \ar[dd]^-{\Fix(\Fix(X,u|))}
    \ar[rd]^-{(\kappa_N)_{G/N}}
    &&&&
    \Fix(A,\alpha)
    \ar[ld]_-{\kappa_G}
    \ar[dd]^-{\Fix(X,u)}
    \\
    & \Fix(\K^{\mu|N},\mu^{G/N})
    \ar[ld]^-{\ \ \Fix(X^{u|N},u^{G/N})}
    \ar[rr]^-{\iota_{G,N}(\K)}
    &&\Fix(\K,\mu)
    \ar[rd]^-{({}_K X_B)^u}
    \\
    \Fix(B^{\beta|N},\beta^{G/N})
    \ar[rrrr]^-{\iota_{G,N}(B)}
    &&&&
    \Fix(B,\beta).
  }
\end{equation*} 
The left and right triangles commute because the functor $\Fix$ is defined on morphisms by factoring, fixing and reassembling (see \cite[\S3]{aHKRWproper-n}). So it remains for us to prove that the lower and upper quadrilaterals commute. 

For the lower quadrilateral, we need to look at the construction of the fixed-point bimodules (see \cite[\S3]{aHKRWproper-n} and our page~\pageref{defmorphism}). The imprimitivity bimodule $(X^{u|N},u^{G/N})$ is by definition the top right-hand corner of $(L(X)^{L(u)|N},L(u)^{G/N})$, and $\Fix(X^{u|N},u^{G/N})$ is by definition the top right-hand  corner of 
\[
\Fix(L(X^{u|N}),L(u^{G/N}))=\Fix(L(X)^{L(u)|N},L(u)^{G/N}).
\] 
Applying the first part of the present theorem to the object $(L(X),L(u),\phi_\K\oplus\psi)$ in $\Aa(G,(C_0(T),\rt))$ gives an isomorphism $\iota_{G,N}(L(X))$ of $\Fix_{G/N}(L(X)^{L(u)|N})$ onto $\Fix(L(X),L(u))$; this isomorphism is induced by the inclusion of $L(X)^{L(u)|N}$ in $M(L(X))$, which restricts on the corners to the inclusions of $\K^{\mu|N}$ and $B^{\beta|N}$ in $M(\K)$ and $M(B)$, and hence $\iota_{G,N}(L(X))$ restricts to $\iota_{G,N}(\K)$ and $\iota_{G,N}(B)$ on the diagonal corners. Now general nonsense (as in Lemma~4.6 of \cite{ER}, for example) implies that the restriction of $\iota_{G,N}(L(X))$ to the top right-hand corner is an isomorphism of $\Fix(X^{u|N},u^{G/N})\otimes_{\Fix B^{\beta|N}}\Fix(B,\beta)$ onto $\Fix(\K^{\mu|N})\otimes_{\Fix(\K,\mu)}\Fix(X,u)$. But the existence of such an isomorphism says precisely that the bottom quadrilateral commutes.

For the top quadrilateral we have to come to grips with  the property which characterises $\kappa_G$, and this means adding some $\iota$ maps. So we consider the diagram
\begin{equation*}\label{}
  \xymatrix{
    \Fix(A^{\alpha|N},\alpha^{G/N})
    \ar[rrrr]^-{\iota_{G,N}(A)}
    \ar[dd]_-{\overline{\kappa_N}}
    \ar[rd]^-{(\kappa_N)_{G/N}}
    &&&&
    \Fix(A,\alpha)
    \ar[ld]_-{\kappa_G}
    \ar[dd]^-{\overline{\kappa}}
    \\
    & \Fix(\K^{\mu|N},\mu^{G/N})
    \ar[ld]_-{\overline{\iota_{G/N}}}
    \ar[rr]^-{\iota_{G,N}(\K)}
    &&\Fix(\K,\mu)
    \ar[rd]^-{\overline{\iota_G}}
    \\
    M(K^{\mu|N})
    \ar[rrrr]^-{\overline{\iota_N}}
    &&&&
    M(\K).
  }
\end{equation*} 
Proposition~\ref{newkqr} implies that the right and left triangles commute. The bottom quadrilateral is the strictly continuous extension of the diagram
\begin{equation*}
    \xymatrix{
      \Fix(\K^{\mu|N},\mu^{G/N})
      \ar[rr]^-{\iota_{G,N}(\K)}
      \ar[d]_{\iota_{G/N}}
      &&
     \Fix(\K,\mu)
      \ar[d]^{\iota_G}
      \\
     M(\K^{\mu|N})
      \ar[rr]^-{\overline{\iota_N}}
      &&
     M(\K),
    }
  \end{equation*}
which commutes because the vertical arrows are inclusions and $\iota_{G,N}=\overline{\iota_N}$. Thus, since the $\iota$'s are all injections, it suffices to prove that the outside square commutes. Proposition~\ref{newkqr} tells us that $\overline{\iota_N}\circ \kappa_N$ is the restriction of $\overline{\kappa}$ to $A^{\alpha|N}\subset M(A)$, which we can rewrite as $\overline{\kappa}\circ\iota_N$. But 
\[
\overline{\iota_N}\circ \kappa_N=\overline{\kappa}\circ\iota_N
\Longrightarrow
\overline{\overline{\iota_N}\circ \kappa_N}=\overline{\overline{\kappa}\circ\iota_N}
\Longrightarrow
\overline{\iota_N}\circ \overline{\kappa_N}=\overline{\kappa}\circ\overline{\iota_N},
\]
which since $\iota_{G,N}=\overline{\iota_N}$ says that the outside square commutes, as required.
\end{proof}

\begin{thm}\label{cor-easyiis}
Suppose that $G$ acts freely and
  properly on a locally compact space $T$, $N$ is a closed
  normal subgroup of $G$, and $(A,\alpha,\phi)$ is an object in the
  semi-comma category $\Aa(G,(C_0(T),\rt))$. Then the following diagram commutes in $\CC$:
\begin{equation}
    \xymatrix{
      A\rtimes_{\alpha|,r} N
      \ar[rrrr]^-{Z(A,\alpha|N,\phi)}
      \ar[dd]_{A\rtimes_{\alpha,r} G}
      &&&&
     A^{\alpha|N}=\Fix(A,\alpha|N,\phi)
      \ar[d]^{Z(A^{\alpha|N}, \alpha^{G/N},\phi_N)}
      \\
      &&&&\Fix(A^{\alpha|N}, \alpha^{G/N},\phi_N)
     \ar[d]^{\iota_{G,N}}\\
     A\rtimes_{\alpha,r} G
      \ar[rrrr]^{Z(A,\alpha,\phi)}
      &&&&
      \Fix(A,\alpha,\phi).
    }
  \end{equation}
\end{thm}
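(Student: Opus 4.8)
The plan is to exploit that every arrow in the diagram except the left-hand vertical one is an isomorphism in $\CC$: the three bimodules labelled $Z$ are Rieffel imprimitivity bimodules, and $\iota_{G,N}$ is an isomorphism by Theorem~\ref{thm-newfix}. The vertical arrow is the right-Hilbert bimodule ${}_{A\rtimes_{\alpha|,r}N}(A\rtimes_{\alpha,r}G)_{A\rtimes_{\alpha,r}G}$ coming from the canonical nondegenerate homomorphism $A\rtimes_{\alpha|,r}N\to M(A\rtimes_{\alpha,r}G)$. Commutativity of the diagram therefore reduces to a single isomorphism of right-Hilbert $(A\rtimes_{\alpha|,r}N)$\,--\,$(\Fix(A,\alpha,\phi))$ bimodules between the two composites. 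Following the pattern of the proof of Theorem~\ref{thm-extension}, I would realise both composites as completions of the common dense subalgebra $A_0=C_c(T)AC_c(T)$---the bottom one directly as Rieffel's $Z(A,\alpha,\phi)$, the top one by telescoping the staged tensor product back onto $A_0$---and take the comparison map to be the identity on $A_0$.

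The right-hand inner products then match almost for free. On the bottom path the $\Fix(A,\alpha,\phi)$-valued inner product of $a,b\in A_0$ is $E^G(a^*b)$, while along the top path, after telescoping and transporting through $\iota_{G,N}$, it is $\iota_{G,N}\bigl(E^{G/N}(E^N(a^*b))\bigr)$; these coincide by the identity \eqref{eqnforfixfix} proved inside Theorem~\ref{thm-newfix}, namely $E^G(a)=\iota_{G,N}(E^{G/N}(E^N(a)))$ for $a\in A_0$. The telescoping is powered by the density facts recorded there, that $E^N(A_0)$ is dense in $A^{\alpha|N}$ and that $\Fix(A^{\alpha|N},\alpha^{G/N},\phi_N)=\clsp\{E^{G/N}(E^N(a)):a\in A_0\}$.

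For the left actions, the observation is that tensoring on the right by $Z(A^{\alpha|N},\alpha^{G/N},\phi_N)$ and by $\iota_{G,N}$ changes only the right coefficient algebra, so the left $A\rtimes_{\alpha|,r}N$-action on the top composite is just the first-stage Rieffel action on $Z(A,\alpha|N,\phi)$, given on $A_0$ by $f\cdot a=\int_N f(n)\alpha_n(a)\,dn$ for $f\in C_c(N,A)$. The canonical homomorphism labelling the vertical arrow acts on $A_0\subset Z(A,\alpha,\phi)$ by this same integral, so the identity on $A_0$ is left $A\rtimes_{\alpha|,r}N$-linear, and a short computation in the style of the linearity check at the end of the proof of Theorem~\ref{thm-extension} completes the verification.

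The main obstacle is the telescoping itself and the identification it forces in the middle column. The arrow $Z(A^{\alpha|N},\alpha^{G/N},\phi_N)$ has natural domain the reduced crossed product $(A^{\alpha|N})\rtimes_{\alpha^{G/N},r}(G/N)$, built from the residual action $\alpha^{G/N}$ of Proposition~\ref{prop-actG/N}, whereas the first stage $Z(A,\alpha|N,\phi)$ lands in $A^{\alpha|N}$; making the composition precise requires the reduced crossed-product-in-stages identification $A\rtimes_{\alpha,r}G\cong(A\rtimes_{\alpha|,r}N)\rtimes_{\tilde\alpha,r}(G/N)$ for the decomposition action $\tilde\alpha$ of $G/N$, and the verification that under it the telescoped module really is a completion of $A_0$ carrying the left and right structures above. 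Once this is pinned down, \eqref{eqnforfixfix} and the matching of left actions make the identity on $A_0$ the desired isomorphism, so the diagram commutes.
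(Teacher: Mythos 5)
Your skeleton does match the paper's: your ``telescoping'' map is exactly the map $\Omega(a\otimes E^N(b))=aE^N(b)$ used there, the right-hand inner products are matched by the identity \eqref{eqnforfixfix}, and the left actions are matched by observing that $w\in C_c(N,A)$ acts on $A_0\subset Z(A,\alpha,\phi)$, through the inclusion of $A\rtimes_{\alpha|,r}N$ in $M(A\rtimes_{\alpha,r}G)$, by the same formula as on $A_0\subset Z(A,\alpha|N,\phi)$ (though your formula drops the factor $\Delta_N(n)^{1/2}$). But the ``main obstacle'' you identify is not an obstacle, and pursuing it would lead you astray: the middle arrow of the diagram is a morphism in $\CC$ out of $A^{\alpha|N}$, not out of $(A^{\alpha|N})\rtimes_{\alpha^{G/N},r}(G/N)$. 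Rieffel's module $Z(A^{\alpha|N},\alpha^{G/N},\phi_N)$ is a completion of $(A^{\alpha|N})_0$ carrying a nondegenerate left action of $A^{\alpha|N}$ by multiplication, so the top-right composite is simply the balanced tensor product $Z(A,\alpha|N,\phi)\otimes_{A^{\alpha|N}}Z(A^{\alpha|N},\alpha^{G/N},\phi_N)$, with the right inner product transported by $\iota_{G,N}$. No identification $A\rtimes_{\alpha,r}G\cong(A\rtimes_{\alpha|,r}N)\rtimes_{r}(G/N)$ enters anywhere---indeed, your own (correct) remark that the left $A\rtimes_{\alpha|,r}N$-action on the composite comes entirely through the first leg already shows that Green's decomposition theorem is an unnecessary import.

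The genuine gap is surjectivity: you never show the comparison map has dense range, and this is not ``the identity on $A_0$.'' The telescoped image is $\lsp\{aE^N(b):a,b\in A_0\}$, a proper subspace of $A_0$, and its density must be proved in the module norm $\|x\|^2=\|E^G(x^*x)\|$ of $Z(A,\alpha,\phi)$, which is not dominated by the $C^*$-norm because $E^G$ is unbounded. Consequently the density facts you cite from Theorem~\ref{thm-newfix}---$C^*$-norm statements about $E^N(A_0)\subset A^{\alpha|N}$ and about $\Fix(A^{\alpha|N},\alpha^{G/N},\phi_N)$---do not yield it. The paper closes this step with a separate argument: choose an approximate identity $e_i=\sum_{j}{}_L\langle x_{i,j},x_{i,j}\rangle$ for $A\rtimes_{\alpha|,r}N$ with all $x_{i,j}\in A_0$ (by \cite[Lemma~6.3]{mw}); nondegeneracy of the inclusion $A\rtimes_{\alpha|,r}N\subset M(A\rtimes_{\alpha,r}G)$ gives $e_i\to 1$ strictly, hence $e_i\cdot c\to c$ in $Z(A,\alpha,\phi)$ for $c\in A_0$; and the coincidence of the left-action formulas converts $e_i\cdot c$ into $\sum_j x_{i,j}E^N(x_{i,j}^*c)$, which lies in the range. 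Without this (or some substitute), your argument only identifies the top composite with a sub-bimodule of $Z(A,\alpha,\phi)$, not with all of it, so the diagram is not yet shown to commute.
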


\begin{proof} We will show that the map
 $\Omega: A_0\otimes_{E^N(A_0)} E^N(A_0)\to A_0$ defined by
$\Omega(a\otimes E^N(b))=aE^N(b)$
extends to a right-Hilbert $(A\rtimes_{\alpha|,r}N)$\,--\,$\Fix(A,\alpha,\phi)$-bimodule isomorphism of \[Z(A,\alpha|N,\phi)\otimes_{A^{\alpha|N}} Z(A^{\alpha|N},\alpha^{G/N},\phi_N)\]
onto $Z(A,\alpha,\phi)$. We begin by showing that $\Omega$ preserves the inner-product: for $a_1,a_2,b_1,b_2\in A_0$, we have
\begin{align*}
\langle a_1\otimes E^N(b_1)\,,\, a_2\otimes E^N&(b_2)\rangle_{\Fix(A,\alpha)}=\langle\langle a_2\,,\, a_1\rangle_{A^{\alpha|N}}E^N(b_1)\,,\, E^N(b_2)\rangle_{\Fix(A,\alpha)}\\
&=\iota_{G,N}\big(\langle E^N(a_2^*a_1)E^N(b_1)\,,\, E^N(b_2)\rangle_{\Fix(A^{\alpha|N},\alpha^{G/N})}\big)\\
&=\iota_{G,N}\big(E^{G/N}(  E^N(b_1)^*E^N(a_2^*a_1)^*E^N(b_2))\big)\\
&=\iota_{G,N}\big(E^{G/N}( E^N( E^N(b_1^*)a_1^*a_2E^N(b_2)))\big)\\
&=E^G( E^N(b_1^*)a_1^*a_2E^N(b_2))\quad\quad\text{(using \eqref{eqnforfixfix})}\\
&=\langle a_1E^N(b_1)\,,\, a_2E^N(b_2)\rangle_{\Fix(A,\alpha)}\\
&=\langle \Omega(a_1\otimes E^N(b_1))\,,\,\Omega( a_2\otimes E^N(b_2))\rangle_{\Fix(A,\alpha)}.
\end{align*}

Next, note that the left action of $z\in C_c(N,A)\subset A\rtimes_{\alpha|,r}N$ on $a\in A_0\subset Z(A,\alpha,\phi)$, which is given by
\[
z\cdot a=\int_G z(t)\alpha_t(a)\Delta_G(t)^{1/2}\, dt,
\]
is the integrated form of the covariant representation $(\pi,U)$, where
\[
\pi(a)b=ab\quad\text{and}\quad U_t(b)=\alpha_t(b)\Delta_G(t)^{1/2}\quad\text{for $a,b\in A_0$}.
\]
The inclusion of $A\rtimes_{\alpha|,r} N$ in $M(A\rtimes_{\alpha,r}G)$ is the integrated form of $(\id,i_G|)$, where $i_G: G\to M(A\rtimes_{\alpha,r}G)$ is the canonical map. Thus $w\in C_c(N,A)$ acts on $a\in A_0$ by
\begin{equation}\label{formleftact}
w\cdot a=\int_N g(n)\alpha_n(a)\Delta_N(n)^{1/2}\, dn
\end{equation}
(since $N$ is normal the modular functions of $G$ and $N$ coincide on $N$). From \eqref{formleftact}, we deduce, first, that $\Omega$ preserves the left action: for $a,b\in A_0$, we have
\begin{align*}
w\cdot \Omega(a\otimes E^N(b))
&=w\cdot(aE^N(b))=\int_N w(n)\alpha_n\big(aE^N(b) \big)\Delta_G(t)^{1/2}\, dn\\
&=\int_N w(n)\alpha_n (a)E^N(b)\Delta_G(t)^{1/2} \, dn\\
&=\Big(\int_N w(n)\alpha_n(a)\Delta_G(t)^{1/2}\, dn\Big) E^N(b)\\
&=\Omega(w\cdot a\otimes E^N(b))=\Omega\big(w\cdot( a\otimes E^N(b))\big).
\end{align*}

Next we use formula \eqref{formleftact} again to see that $\Omega$ has dense range, and hence is surjective. To do this, let $c\in A_0\subset Z(A,\alpha,\phi)$. Since $Z(A, \alpha|N,\phi)$ is an imprimitivity bimodule which is obtained by completing $A_0$, we can find an approximate identity $\{e_i\}$ for $A\rtimes_{\alpha|,r}N$ of the form $e_i=\sum_{j=1}^{n_i}{}_L\langle x_{i,j},x_{i,j}\rangle$, where all the elements $x_{i,j}$  belong to $A_0$ (by \cite[Lemma~6.3]{mw}, for example). Since the inclusion of $A\rtimes_{\alpha|,r}N$ in $M(A\rtimes_{\alpha,r}G)$ is nondegenerate, the net $\{e_i\}$ converges to $1$ strictly in $M(A\rtimes_{\alpha,r}G)$, and we can approximate $c$ in $Z(A,\alpha,\phi)$ by an element $e_i\cdot c$. But now we observe that the formula \eqref{formleftact} for the left action  of $w\in C_c(N,A)$ on $A_0\subset Z(A,\alpha,\phi)$ is exactly the same as the formula for the left action of $w$ on $A_0\subset Z(A,\alpha|N,\phi)$, and hence we have
\[
e_i\cdot c=\sum_{j=1}^{n_i}{}_L\langle x_{i,j},x_{i,j}\rangle\cdot c=\sum_{j=1}^{n_i}x_{i,j}\cdot \langle x_{i,j},c\rangle_{A^{\alpha|N}}=\sum_{j=1}^{n_i}x_{i,j}E^N(x_{i,j}^*c).
\]
Since each $x_{i,j}E^n(x_{i,j}^*c)$ belongs to the range of $\Omega$, we deduce that $\Omega$ has dense range. Now general nonsense tells us that $\Omega$ is a right-module homomorphism, and hence is an isomorphism of right-Hilbert bimodules.
\end{proof}

\section{Rieffel's Morita equivalence is equivariant}\label{sec:plums}

Let $[X,u]$ be a morphism from $(A,\alpha)$ to $(B,\beta)$ in    $\Aa(G)$. 
Recall that
$\Delta_{G,N}(s):=\Delta_G(s)\Delta_{G/N}(sN)^{-1}$
(see equation~\ref{Delta}).
By Lemma~3.23 of \cite{enchilada} there is a unique action $\alpha^\dec:G\to\Aut(A\rtimes_{\alpha|,r}N)$  such that
$
\alpha_s^\dec(w)(n)=\Delta_{G,N}(s)\alpha_s(w(s^{-1}ns))
$
for $w\in C_c(N,A)$.  The same formula defines an $\alpha^\dec$\,--\,$\beta^\dec$ compatible action $u^\dec$ on $C_c(N,X)\subset X\rtimes_{u|,r}N$.   By Theorem~3.24 of \cite{enchilada}, the assignments
\[(A,\alpha)\mapsto (A\rtimes_{\alpha|,r}N,\alpha^\dec)\quad\text{and}\quad (X,u)\mapsto (X\rtimes_{u|,r}N,u^\dec)
\] define a functor $\RCP_N^G$ from $\Aa(G)$ to $\Aa(G)$.

Composing with the quotient map $q:G\to G/N$ gives an action $\Inf(\alpha)=\alpha\circ q$ of $G$ on $A$, and there is an $\Inf(\alpha)$\,--\,$\Inf(\beta)$ compatible action $\Inf(u)=u\circ q$ on $X$.  By Corollary~3.18 of \cite{enchilada} the assignments 
\[(A,\alpha)\mapsto (A,\Inf(\alpha))\quad\text{and}\quad (X,u)\mapsto (X,\Inf(u))
\] define a functor $\Inf_{G/N}^G$ from $\Aa(G/N)$ to $\Aa(G)$.

Applying Theorem~3.5 of \cite{aHKRWproper-n} with $(T,G)=(G,N)$ gives a natural isomorphism between functors $\Fix_N$ and $\RCP_N$ defined on $\Aa(G,(C_0(G),\rt))$; the next theorem is an equivariant version of this natural isomorphism.  

\begin{thm}\label{thm-equivariantRieffel}
Suppose that $G$ is a locally compact group acting freely and
  properly on a locally compact space $T$, and that $N$ is a closed
  normal subgroup of $G$. Let $(A,\alpha,\phi)$ be an object in $\Aa(G,(C_0(T),\rt))$.
Then there is an $\alpha^\dec$\,--\,$\Inf(\alpha^{G/N})$ compatible  action $\Rie(\alpha)$ of $G$ on Rieffel's bimodule $Z(A,\alpha|N,\phi)$, and the assignment
\[
(A,\alpha)\mapsto (Z(A,\alpha|N,\phi),\Rie(\alpha))
\]
implements a natural isomorphism between the functors $\RCP_N^G$ and $\Inf_{G/N}^G\circ \Fix_N^{G/N}$.
\end{thm}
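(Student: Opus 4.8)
The plan is to define $\Rie(\alpha)$ explicitly on the dense subalgebra $A_0\subset Z(A,\alpha|N,\phi)$ and then verify, in turn, that it is a compatible action and that the resulting isomorphism is natural. On $A_0$ I set
\[
\Rie(\alpha)_s(a)=\Delta_{G,N}(s)^{1/2}\alpha_s(a),
\]
so that $\Rie(\alpha)_{st}=\Rie(\alpha)_s\circ\Rie(\alpha)_t$ because $\Delta_{G,N}=\Delta_G\cdot(\Delta_{G/N}\circ q)^{-1}$ is a homomorphism $G\to\R_+$. The modular factor is forced: since the right $A^{\alpha|N}$-valued inner product on $Z(A,\alpha|N,\phi)$ is $\langle a,b\rangle_{A^{\alpha|N}}=E^N(a^*b)$, equation \eqref{alphaokonG/N} gives
\[
\langle\Rie(\alpha)_s(a),\Rie(\alpha)_s(b)\rangle_{A^{\alpha|N}}=\Delta_{G,N}(s)E^N(\alpha_s(a^*b))=\alpha_s\bigl(E^N(a^*b)\bigr)=\Inf(\alpha^{G/N})_s\bigl(\langle a,b\rangle_{A^{\alpha|N}}\bigr),
\]
which is exactly compatibility with the right action $\Inf(\alpha^{G/N})$. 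In particular each $\Rie(\alpha)_s$ is isometric for the $Z$-norm, so it extends to the completion, and strong continuity follows from the continuity of $\alpha$ and of $s\mapsto\Delta_{G,N}(s)$ on the dense subspace.

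Next I would check compatibility with the left action $\alpha^\dec$. Using the formula \eqref{formleftact} for the left action of $w\in C_c(N,A)$ on $A_0$ together with the defining formula $\alpha^\dec_s(w)(n)=\Delta_{G,N}(s)\alpha_s(w(s^{-1}ns))$, one computes $\Rie(\alpha)_s(w\cdot a)$ and $\alpha^\dec_s(w)\cdot\Rie(\alpha)_s(a)$ as integrals over $N$ and compares. Changing variables $n\mapsto sns^{-1}$ and invoking the normality identity \eqref{Delta} (which says this substitution scales Haar measure on $N$ by a power of $\Delta_{G,N}(s)$) makes all the modular factors cancel, provided one knows that $\Delta_N$ is invariant under the conjugation automorphism $n\mapsto sns^{-1}$ of $N$; this last fact is elementary, since for any automorphism $\theta$ of $N$ the homomorphism $\Delta_N\circ\theta$ agrees with $\Delta_N$ because $\R_+$ is abelian. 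Compatibility with the right module action is immediate, since $\Rie(\alpha)_s$ is (up to the scalar $\Delta_{G,N}(s)^{1/2}$) just $\alpha_s$ and $\Inf(\alpha^{G/N})_s$ is $\alpha_s$ restricted to $A^{\alpha|N}$. As $Z(A,\alpha|N,\phi)$ is an imprimitivity bimodule, these checks suffice to show $\Rie(\alpha)$ is an $\alpha^\dec$\,--\,$\Inf(\alpha^{G/N})$ compatible action.

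For naturality, let $[X,u]:(A,\alpha)\to(B,\beta)$ be a morphism in $\Aa(G)$. The non-equivariant content---that $Z(A,\alpha|N,\phi)$ implements a natural isomorphism between $\RCP_N$ and $\Fix_N$---is already supplied by \cite[Theorem~3.5]{aHKRWproper-n} applied with the group $N$ (after restricting the actions from $G$ to $N$), and in particular provides an isomorphism of $(A\rtimes_{\alpha|,r}N)$\,--\,$B^{\beta|N}$ bimodules
\[
(X\rtimes_{u|,r}N)\tensor_{B\rtimes_{\beta|,r}N}Z(B,\beta|N,\psi)\cong Z(A,\alpha|N,\phi)\tensor_{A^{\alpha|N}}\Fix(X,u|N).
\]
The task is to upgrade this to an isomorphism in $\Aa(G)$, i.e.\ to show it intertwines the tensor-product actions $u^\dec\tensor\Rie(\beta)$ and $\Rie(\alpha)\tensor\Inf(u^{G/N})$. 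Since the isomorphism is determined by an explicit formula on the dense subspace spanned by elementary tensors coming from $C_c(N,X)$ and $A_0$, and all four actions $\Rie(\alpha)$, $\Rie(\beta)$, $u^\dec$, $\Inf(u^{G/N})$ are given by concrete formulas carrying the same modular factor $\Delta_{G,N}(s)^{1/2}$, equivariance reduces to a direct computation on this dense subspace. I expect the main obstacle to be precisely this bookkeeping: one must track the $\dec$ and $\Inf$ actions through both tensor products and confirm that the various powers of $\Delta_{G,N}$ and the conjugation substitutions on $N$ cancel, exactly as in the compatibility check above but now with $u$ in place of $\alpha$. Once equivariance of this single isomorphism is verified, the remaining naturality squares commute in $\Aa(G)$ because they already commute in $\CC$ by the non-equivariant result.
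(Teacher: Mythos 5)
Your construction of $\Rie(\alpha)$ (the first two paragraphs) is correct and is essentially the paper's own argument: the same formula $\Rie(\alpha)_s(a)=\Delta_{G,N}(s)^{1/2}\alpha_s(a)$ on $A_0$, the same compatibility computations, with only the cosmetic difference that you extract isometry from the $A^{\alpha|N}$-valued inner product where the paper uses the $(A\rtimes_{\alpha|,r}N)$-valued one. One repair is needed, though: your justification that $\Delta_N$ is invariant under the conjugation automorphisms $n\mapsto sns^{-1}$ ``because $\R_+$ is abelian'' is not a proof --- homomorphisms into abelian groups are not automorphism-invariant in general (take $N=\R$, $\theta(x)=2x$, and $x\mapsto e^x$). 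The fact you need is true, but the efficient argument is the one the paper uses: normality gives $\Delta_G|_N=\Delta_N$, whence $\Delta_N(s^{-1}ns)=\Delta_G(s^{-1}ns)=\Delta_G(n)=\Delta_N(n)$.

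The genuine gap is in the naturality half. You treat the isomorphism supplied by \cite[Theorem~3.5]{aHKRWproper-n} as if it were ``determined by an explicit formula on the dense subspace spanned by elementary tensors coming from $C_c(N,X)$ and $A_0$,'' and then defer equivariance to bookkeeping. But that isomorphism is not given by any such global formula: it is assembled by factoring $[X,u]$ (via \cite[Corollary~2.3]{aHKRWproper-n}) into a nondegenerate homomorphism $\kappa:A\to M(\K(X))$ followed by an imprimitivity bimodule, and the module $\Fix(X,u|N)$ itself is defined as the top right-hand corner of the fixed-point algebra of a linking algebra, with the left $A^{\alpha|N}$-action coming through the multiplier extension $\kappa_N$ of Proposition~\ref{newkqr}. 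So there is no dense set of elementary tensors on which all the maps in the square have concrete integral formulas (indeed, your own bookkeeping claim is already off: $\Inf(u^{G/N})$ carries no modular factor at all, and $u^\dec$ carries $\Delta_{G,N}(s)$ rather than $\Delta_{G,N}(s)^{1/2}$). What is actually required is to redo the construction of the Theorem~3.5 isomorphism equivariantly, and that is the bulk of the paper's proof: the factorisation splits the naturality square into two quadrilaterals (plus two triangles that commute equivariantly by functoriality of $\RCP_N^G$ and $\Inf_{G/N}^G\circ\Fix_N^{G/N}$); the upper quadrilateral is handled by the explicit map $\Phi(a\otimes E^N(l))=\kappa(a)E^N(l)$ of \cite[Theorem~3.2]{kqrproper} together with a direct equivariance check, and the lower one by realising everything inside $W=Z(L(X),L(u)|N,\phi_\K\oplus\psi)$ over the linking algebra, where the isomorphisms $\Psi_1,\Psi_2$ of \cite[Lemma~4.6]{ER} are equivariant because every action in sight is a corner restriction of the single compatible pair $\Rie(L(u))$ and $\Inf(L(u)^{G/N})$. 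None of this appears in your proposal, so naturality remains unproved. Your closing sentence is also a non sequitur: commutativity of a square in $\CC$ never upgrades automatically to commutativity in $\Aa(G)$ --- producing an \emph{equivariant} isomorphism is precisely the content to be established for each morphism $[X,u]$.
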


\begin{proof}  We start by showing that there is an $\alpha^\dec$\,--\,$\Inf(\alpha^{G/N})$ compatible action $\Rie(\alpha)$ on $Z(A,\alpha|,\phi)$ such that $\Rie(\alpha)_s(a)=\Delta_{G,N}(s)^{1/2}\alpha_s(a)$ for $a\in A_0$.

Fix $a,b\in A_0$. The modular functions of $G$ and $N$ agree on $N$ by normality, so
\begin{align*}
\alpha_s^\dec({}_{A\rtimes_{\alpha|,r}N} \langle a\,,\, b\rangle)(n)&=\Delta_{G,N}(s)\alpha_s\big( {}_{A\rtimes_{\alpha|,r}N} \langle a\,,\, b\rangle(s^{-1}ns) \big)\\
&=
\Delta_{G,N}(s)\alpha_s\big( a\alpha_{s^{-1}ns}(b)\Delta_N(s^{-1}ns)^{-1/2} \big)\\
&=\Delta_{G,N}(s)\alpha_s(a)\alpha_{ns}(b)\Delta_N(n)^{-1/2}
\\
&=\Delta_{G,N}(s) {}_{A\rtimes_{\alpha|,r}N} \langle \alpha_s(a)\,,\,\alpha_s( b)\rangle(n)
\\
&={}_{A\rtimes_{\alpha|,r}N} \langle \Rie(\alpha)_s(a)\,,\,\Rie(\alpha)_s( b)\rangle(n);
\end{align*}
replacing $a$ by $w\cdot a$ shows that $\Rie(\alpha)_s(w\cdot a)=\alpha_s^\dec(w)\cdot \Rie(\alpha)_s(a)$. Next, we let  $c\in A_0$, compute
\begin{align*}
c(\Inf\alpha^{G/N})_s(\langle a\,,\, b\rangle_{A^{\alpha|N}})
&=
c\alpha_s(\langle a\,,\, b\rangle_{A^{\alpha|N}})
=\alpha_s\big(\alpha_s^{-1}(c) E^N(a^*b)\big)\\
&=\alpha_s\left(\int_N\alpha_s^{-1}(c)\alpha_n(a^*b)\, dn  \right)
=\int_N c\alpha_{sn}(a^*b)\, dn \\
&=\Delta_{G,N}(s)\int_N c\alpha_{ns}(a^*b)\, dn \quad\text{(see \eqref{Delta})}\\
&=\Delta_{G,N}(s)\int_N c\alpha_{n}(\alpha_s(a)^*\alpha_s(b))\, dn \\
&=cE^N\big(\Rie(\alpha)_s(a)^*\Rie(\alpha)_s(b)\big)\\
&=c\langle\Rie(\alpha)_s(a)\,,\,\Rie(\alpha)_s(b)\rangle_{A^{\alpha|N}},
\end{align*}
and replacing $b$ by $b\cdot l$ shows that 
$\Rie(\alpha)_s(a\cdot l)=\Rie(\alpha)_s(a)\cdot  (\Inf\alpha^{G/N})_s(l)$ for $l\in E^N(A_0)$. In particular, 
\[
\|\Rie(\alpha)_s(a)\|^2
=\|\alpha_s^\dec({}_{A\rtimes_{\alpha|,r}N}  \langle a\,,\, a \rangle) \|
=\|{}_{A\rtimes_{\alpha|,r}N}   \langle a\,,\, a \rangle) \|
=\|a\|^2,
\]
so  $\Rie(\alpha)_s$ extends to an automorphism of   $Z(A,\alpha|N,\phi)$.

To see that  $\Rie(\alpha)$ is strongly continuous it suffices to see that  for a fixed $a\in A_0$, $s\mapsto {}_{A\rtimes_{\alpha|,r}N}   \langle \Rie(\alpha)_s(a)\,,\, a\rangle={}_{A\rtimes_{\alpha|,r}N}   \langle \Delta_{G,N}(s)\alpha_s(a)\,,\, a\rangle$ is continuous  (see, for example, \cite[Lemma~2.1]{aHRWproper2}).  So the strong continuity of $\Rie(\alpha)$ follows from the strong continuity of $\alpha$ and the continuity of $\Delta_{G,N}$.

Let
$[X,u]: (A,\alpha,\phi)\to (B,\beta,\psi)$ be a morphism in
$\Aa(G,(C_0(T),\rt))$. We factor $X$ using \cite[Corollary~3.5]{aHKRWproper-n}:
\begin{equation*}\label{eq-main1}
  \xymatrix{
    (A,\alpha,\phi)
    \ar[r]^-{\kappa}
    &(\K(X),\mu,\phi_{\K})
    \ar[r]^-{(X,u)}
    &(B,\beta,\psi).
  }
\end{equation*}
We need to show that the outer square of the
following diagram
\begin{equation*}\label{eq-main-2}
  \xymatrix{
    (A\rtimes_{\alpha|,r} N,\alpha^\dec)
    \ar[rrrr]^-{(Z(A,\alpha|N,\phi),\Rie(\alpha))}
    \ar[dd]_-{(X\rtimes_{r}N,u^\dec)}
    \ar[rd]^-{\kappa\rtimes_r N}
    &&&&
    (A^{\alpha|N},\Inf(\alpha^{G/N}))
    \ar[ld]_-{\kappa_N}
    \ar[dd]^-{(\Fix(X,u|),\Inf(u^{G/N}))}
    \\
    & \K\rtimes_{\mu|,r} N
    \ar[ld]^-{\ (X\rtimes_{u|,r} N,\mu^\dec)}
    \ar[rr]^-{Z(\K,\mu|N,\phi_{\K})}
    &&\K^{\mu|N}
    \ar[rd]_-{(X^{u|},\Inf(u^{G/N}))\ \ }
    \\
    (B\rtimes_{\beta|,r}N,\beta^\dec)
    \ar[rrrr]_-{(Z(B,\beta|N,\psi),\Rie(\beta))}
    &&&&
    (B^{\beta|N},\Inf(\beta^{G/N}))
  }
\end{equation*}
commutes equivariantly. Without the actions the diagram commutes by \cite[Theorem~3.5]{aHKRWproper-n}. The left and right triangles commute equivariantly because $\RCP_N^G$ is a functor from $\Aa(G)$ to $\Aa(G)$ and  $\Inf_{G/N}^G\circ\Fix_N^{G/N}$ is a functor from  $\Aa(G,(C_0(T),\rt))$ to $\Aa(G)$.

By \cite[Theorem~3.2]{kqrproper}, the isomorphism representing the upper quadrilateral is
\[
\Phi:Z(A,\alpha|N,\phi)\otimes_{A^{\alpha|}}\K^{\mu|N}\to Z(\K,\mu|N,\phi_\K)\ \text{ given by }\ \Phi(a\otimes E^N(l))=\kappa(a)E^N(l)
\]
 for $a\in A_0$ and $l\in (\K^{\mu|N})_0$. The $\alpha$\,--\,$\mu$ equivariance of $\kappa$ gives the equivariance of $\Phi$:
\begin{align*}
\Phi(\Rie(\alpha)_s(a)\otimes\Inf(\mu)_s(E^N(l)))
&=\kappa\big(\Rie(\alpha)_s(a)\big)\Inf(\mu)_s(E^N(l))\\
&=\Delta_{G,N}(s)^{1/2}\kappa(\alpha_s(a))\mu_s(E^N(l))\\
&=\Delta_{G,N}(s)^{1/2}\mu_s(\kappa(a)E^N(l))\\
&=\Rie(\mu)_s(\Phi(a\otimes E^N(l))).
\end{align*}
So the upper quadrilateral commutes equivariantly, and it remains to show that the lower quadrilateral commutes equivariantly.

The imprimitivity bimodule $W:=Z(L(X),L(u)|N,\phi_{\K}\oplus\psi)$ carries a $L(u)^\dec$\,--\,$\Inf(L(u)^{G/N})$ compatible action $\Rie(L(u))$, and since the isomorphism of $L(X)\rtimes_{L(u)|,r}N$ onto $L(X\rtimes_{u|,r}N)$ is $L(u)^\dec$\,--\,$L(u^\dec)$ equivariant and $L(X)^{L(u)|}=L(X^{u|})$, we can view  $(W,\Rie(L(u)))$ as a $(L(X\rtimes_{u|,r}N), L(u^\dec))$\,--\,$(L(X^{u|}),\Inf L(u^{G/N}))$ imprimitivity bimodule.  Let $p,q\in M(L(X\rtimes_{u|,r}N))$  and $p',q'\in M(L(X^{u|}))$ be the complementary corner projections.

Let $k\in\K_0$. The map $k\mapsto \big(\begin{smallmatrix} k&0\\0&0\end{smallmatrix}\big)$ extends to a norm-preserving map of $\K_0\subset Z(\K,\mu|N,\phi_\K)$ into $W$, and hence extends to an isomorphism of $Z(\K,\mu|N,\phi_\K)$ onto $pFp'$ which is compatible with the inclusion of $\K^{\mu|N}$ onto $pL(X)^{L(u)|N}p'$.  Similarly, $Z(B,\beta|N, \psi)$ is isomorphic to $qWq'$.

In \cite[Theorem~3.5]{aHKRWproper-n} we showed that the lower quadrilateral commutes by applying \cite[Lemma~4.6]{ER} to get isomorphisms
\[
\Psi_1:pWp'\otimes_{\K^{\mu|}} X^{u|}\to pWq'\ \text{ given by }\ \Psi_1(k\otimes m)=k\cdot m,
\]
where the action of $m\in X^{u|}$ on $k\in pWp'$ is the right action of $L(X^{u|})$ on $W$, so that $k\cdot m$ is the product of
\[\begin{pmatrix}k&0\\0&0\end{pmatrix}\in L(X_0)\quad{\text{and}}\quad \begin{pmatrix}0&m\\0&0\end{pmatrix}\in M(L(X)),
\]
and
\[\Psi_2:X\rtimes_{u|,r}N\otimes_{B\rtimes_{\beta|,r}N} qWq'\to pWq'\ \text{ given by }\ \Psi_2(z\otimes b)=z\cdot b.\]
The actions $\Rie(\mu)$ and $\Rie(u)$ of $G$ on $pWp'=Z(\K,\mu|,\phi_\K)$ and $pWq'$  are  restrictions  of the action $\Rie(L(u))$ of $G$ on $W$. The action $\Inf(u^{G/N})$ of $G$ on $X^{u|}$ is the restriction to the top-right corner of the action $\Inf(L(u)^{G/N})$ of $G$ on $L(X)^{L(u)|}$.
Since $\Rie(L(u))$ and $\Inf(L(u)^{G/N})$ are compatible actions,  $\Psi_1$ is $(\Rie(\mu)\otimes\Inf(u^{G/N}))$\,--\,$\Rie(u)$ equivariant.
Similarly,  $\Psi_2$ is equivariant.
Thus the lower quadrilateral commutes equivariantly, as required.
\end{proof}

Let $\delta:B\to M(B\otimes C^*(G))$ be a normal nondegenerate coaction.  When $(T,G,N)=(G,G,N)$ and $(A,\alpha,\phi)=(B\rtimes_\delta G,\hat\delta, j_G)$, Theorem~\ref{thm-equivariantRieffel} is part of \cite[Theorem~4.3]{enchilada} (it does not include the assertions about equivariance with respect to coactions). 
This application was our motivation for formulating Theorem~\ref{thm-equivariantRieffel}.

\section{Naturality for crossed products by coactions}\label{sec:nat}

Throughout this section  $H\subset K$ are closed subgroups of $G$ such that $H$ is normal in $K$. We begin by showing that under these circumstances the functor $\RCP_{G/H}$ of \cite[Proposition~5.5]{aHKRWproper-n} has an equivariant version.

\begin{lemma}\label{lem-functorscoincide}
For every object $(B,\delta)$ and every morphism $[X,\Delta]$ in $\Aca^\nor(G)$, there are actions $(\hat\delta|H)^{K/H}$ and $(\hat\Delta|H)^{K/H}$ of $K/H$ on $B\rtimes_{\delta,r}(G/H)$ and $X\rtimes_{\Delta,r}(G/H)$ such that 
\begin{align}
(\hat\delta|H)^{K/H}_{tH}(j_B(b)j_G|(f))&=j_B(b)j_G|(\rt_{tH}(f))\ \text{ for $b\in B$, $f\in C_0(G/H)$, and }\label{actonG/H1}\\
(\hat\Delta|H)^{K/H}_{tH}(j_X(x)j_G|(f))&=j_G(x)j_G|(\rt_{tH}(f))\ \text{ for $x\in X$, $f\in C_0(G/H)$.}\label{actonG/H2}
\end{align}
 Adding these actions makes $\RCP_{G/H}$ into a functor $\RCP_{G/H}^{K/H}$ from $\Aca^\nor(G)$ to $\Aa(K/H)$, and this functor coincides with $\Fix_H^{K/H}\circ \CP_K$.
\end{lemma}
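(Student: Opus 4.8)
The plan is to recognise $\RCP_{G/H}^{K/H}$ as the composite $\Fix_H^{K/H}\circ\CP_K$, so that functoriality is automatic and the only substantive work is to pin down the two $K/H$-actions on generators. Since $K$ acts freely and properly on $G$ by right translation and $H$ is normal in $K$, Proposition~\ref{prop-actG/N} applied with $(G,T,N)=(K,G,H)$ supplies a functor $\Fix_H^{K/H}\colon\Aa(K,(C_0(G),\rt))\to\Aa(K/H,(C_0(G/H),\rt^{K/H}))$. Composing this with $\CP_K$ and then with the forgetful functor to $\Aa(K/H)$ yields a functor $\Aca^\nor(G)\to\Aa(K/H)$. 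On objects $\CP_K$ sends $(B,\delta)$ to $(B\rtimes_\delta G,\hat\delta|K,j_G)$, and since $(\hat\delta|K)|H=\hat\delta|H$ the underlying $\CC$-valued functor of $\Fix_H^{K/H}\circ\CP_K$ (obtained by forgetting the $K/H$-action) is exactly $\Fix\circ\CP_H=\RCP_{G/H}$ by \cite[Proposition~5.5]{aHKRWproper-n}. Thus it remains only to identify the $K/H$-actions that $\Fix_H^{K/H}$ adds, to show they are given by \eqref{actonG/H1} and \eqref{actonG/H2}, and then to define $\RCP_{G/H}^{K/H}$ to be $\RCP_{G/H}$ equipped with these actions.

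For the object action, recall from the construction in Proposition~\ref{prop-actG/N} that the action $\alpha^{G/N}$ (here $\alpha=\hat\delta|K$, $N=H$) is characterised by $\iota_H\circ(\hat\delta|H)^{K/H}_{tH}=\hat\delta_t\circ\iota_H$ on the fixed-point algebra $\Fix(B\rtimes_\delta G,\hat\delta|H,j_G)=B\rtimes_{\delta,r}(G/H)$, viewed inside $M(B\rtimes_\delta G)$ via $\iota_H$, for $t\in K$. I would evaluate this on a generator $j_B(b)j_G|(f)$ with $b\in B$ and $f\in C_0(G/H)$: the characterisation of $\hat\delta$, together with the fact that the dual action fixes $j_B(B)$, gives $\hat\delta_t(j_B(b)j_G|(f))=j_B(b)j_G|(\rt_t(f))$. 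The one place where normality is essential is the observation that $\rt_t(f)\colon s\mapsto f(st)$ is again right-$H$-invariant for $t\in K$, since $f(sht)=f\big(st\,(t^{-1}ht)\big)=f(st)$ because $t^{-1}ht\in H$; hence $\rt_t(f)\in C_0(G/H)$, it depends only on the coset $tH$, and it is by definition $\rt_{tH}(f)$. This yields \eqref{actonG/H1}, and in particular confirms that $\hat\delta|K$ genuinely preserves $B\rtimes_{\delta,r}(G/H)$ so that the descent to $K/H$ makes sense.

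For a morphism $[X,\Delta]$, the functor $\CP_K$ produces the right-Hilbert bimodule $X\rtimes_\Delta G$ with dual action $\hat\Delta|K$, and $\Fix_H^{K/H}$ returns $\Fix(X\rtimes_\Delta G,\hat\Delta|H)=X\rtimes_{\Delta,r}(G/H)$ (again by \cite[Proposition~5.5]{aHKRWproper-n}) carrying the compatible $K/H$-action $(\hat\Delta|H)^{K/H}$ induced by $\hat\Delta|K$. The same computation as above, now on a generator $j_X(x)j_G|(f)$ and using $\hat\Delta_t(j_X(x)j_G|(f))=j_X(x)j_G|(\rt_t(f))$ together with $\rt_t(f)=\rt_{tH}(f)\in C_0(G/H)$ for $t\in K$, establishes \eqref{actonG/H2}.

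Finally, since $\Fix_H^{K/H}\circ\CP_K$ is a composite of functors it is itself a functor, and by the preceding two paragraphs its object and morphism maps are precisely $\RCP_{G/H}$ equipped with the actions \eqref{actonG/H1} and \eqref{actonG/H2}. Hence those actions are well defined, the assignment $\RCP_{G/H}^{K/H}$ is a functor from $\Aca^\nor(G)$ to $\Aa(K/H)$, and it coincides with $\Fix_H^{K/H}\circ\CP_K$, as claimed. I expect the only genuinely nontrivial input to be the normality of $H$ in $K$, which is exactly what makes $\rt_t$ preserve $C_0(G/H)$ for $t\in K$ and hence makes the $K/H$-actions exist; every other step is bookkeeping built on Proposition~\ref{prop-actG/N} and \cite[Proposition~5.5]{aHKRWproper-n}.
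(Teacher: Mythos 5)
Your proof is correct and follows essentially the same route as the paper's: both realise $\RCP_{G/H}^{K/H}$ as the composite $\Fix_H^{K/H}\circ\CP_K$ built from Proposition~\ref{prop-actG/N}, identify the underlying $\CC$-valued functor with $\RCP_{G/H}$ via \cite[Proposition~5.5]{aHKRWproper-n}, and read off the formulas \eqref{actonG/H1}--\eqref{actonG/H2} from the fact that $(\hat\delta|H)^{K/H}_{tH}$ is by construction the restriction of the dual automorphism $\hat\delta_t$. You are in fact a bit more explicit than the paper, which compresses the last step into ``this follows immediately from the definition of the dual action''; your check that normality of $H$ in $K$ is exactly what makes $\rt_t$ preserve $C_0(G/H)$ and depend only on the coset $tH$ fills in the detail the paper leaves implicit.
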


\begin{proof}
Proposition~\ref{prop-actG/N} shows that the functor $\Fix_H:\Aa(G, (C_0(G),\rt))\to\CC$ extends to a functor $\Fix_H^{K/H}:\Aa(G, (C_0(G),\rt))\to\Aa(K/H)$ , and hence $\Fix_H\circ\CP_K$ also extends. Since the effect of the two functors $\RCP_{G/H}$ and $\Fix_H\circ\CP_K$ is exactly the same (they yield the same subspaces of multiplier algebras and bimodules), we just need to check that the actions have the effect described in \eqref{actonG/H1} and \eqref{actonG/H2}. But the automorphism $(\hat\delta|H)^{K/H}_{tH}$ of $\Fix (B\rtimes_{\delta}G,\hat\delta|H)$ is by definition the restriction of $(\hat\delta|H)_t=\hat\delta_t$, so this follows immediately from the definition of the dual action.
\end{proof}

\begin{thm}\label{newMe} Suppose that $H$ and $K$ are closed subgroups 
of a locally compact group $G$ such that $H$ is normal in
$K$. Let $(B,\delta)$ be an object in $\Aca^\nor(G)$, and let 
\[
\iota_{K,H}: \Fix(B\rtimes_{\delta,r}(G/H),\hat\delta^{K/H}, 
(j_G)_H)\to \Fix(B\rtimes_{\delta}G, \hat\delta|K,j_G)
\]
be the isomorphism of Theorem~\ref{thm-newfix}. Then
\begin{equation}\label{Rieffbimodswithiota}
(B,\delta)\mapsto [\iota_{K,H}]\circ [Z\big(B\rtimes_{\delta,r} (G/H),\hat\delta^{K/H}, 
(j_G)_H\big)]
\end{equation}
is a natural isomorphism between the 
functors
\begin{align*}
&\RCP_{K/H}\circ \RCP_{G/H}^{K/H}:(B,\delta)\mapsto (B\rtimes_{\delta,r} 
(G/H))\rtimes_{\hat\delta^{K/H},r}(K/H), \text{ and}\\
&\RCP_{G/K}:(B,\delta)\mapsto B\rtimes_{\delta,r} (G/K)
\end{align*}
from $\Aca^\nor(G)$ to $\CC$.
\end{thm}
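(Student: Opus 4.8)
The plan is to prove this by specialising the abstract machinery of Sections~\ref{sec:abstractfis}--\ref{sec:plums} to the pair $(T,G,N)=(G,K,H)$ and the object $(A,\alpha,\phi)=(B\rtimes_\delta G,\hat\delta|K,j_G)$, and then assembling two already-established natural isomorphisms. First I would record the identifications that make everything fit: with this choice we have $A^{\alpha|N}=\Fix(B\rtimes_\delta G,\hat\delta|H,j_G)=B\rtimes_{\delta,r}(G/H)$ and $A^\alpha=\Fix(B\rtimes_\delta G,\hat\delta|K,j_G)=B\rtimes_{\delta,r}(G/K)$, so the iterated fixed-point algebra $\Fix(A^{\alpha|N},\alpha^{G/N},\phi_N)$ is precisely $\Fix(B\rtimes_{\delta,r}(G/H),\hat\delta^{K/H},(j_G)_H)$ and $\iota_{K,H}$ is the isomorphism supplied by Theorem~\ref{thm-newfix}. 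Since $H$ is normal in $K$, Theorem~\ref{thm-newfix} applies, and since $K/H$ acts freely and properly on $G/H=T/N$ the functor $\RCP_{G/H}^{K/H}=\Fix_H^{K/H}\circ\CP_K$ of Lemma~\ref{lem-functorscoincide} genuinely lands in $\Aa(K/H,(C_0(G/H),\rt))$. Both $[Z(\cdots)]$ and $[\iota_{K,H}]$ are isomorphisms in $\CC$ (Rieffel's bimodule is an imprimitivity bimodule and $\iota_{K,H}$ is a $*$-isomorphism), so their composite is an isomorphism for each object; as the introduction stresses, this existence is itself the genuinely new content, and it comes essentially for free. The real work is naturality.

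To obtain naturality I would exhibit the assignment \eqref{Rieffbimodswithiota} as a vertical composite of two natural isomorphisms. For the first, apply \cite[Theorem~3.5]{aHKRWproper-n} to the group $K/H$ acting on $G/H$: Rieffel's bimodules give a natural isomorphism from $\RCP_{K/H}$ to $\Fix_{K/H}$ on $\Aa(K/H,(C_0(G/H),\rt))$. Precomposing (whiskering) it with the functor $\RCP_{G/H}^{K/H}$ yields a natural isomorphism from $\RCP_{K/H}\circ\RCP_{G/H}^{K/H}$ to $\Fix_{K/H}\circ\RCP_{G/H}^{K/H}$ whose component at $(B,\delta)$ is exactly $[Z(B\rtimes_{\delta,r}(G/H),\hat\delta^{K/H},(j_G)_H)]$; by Lemma~\ref{lem-functorscoincide} the target functor is $\Fix_{K/H}\circ\Fix_H^{K/H}\circ\CP_K$. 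For the second, apply Theorem~\ref{thm-newfix} with $(T,G,N)=(G,K,H)$: the isomorphisms $\iota_{K,H}$ constitute a natural isomorphism from $\Fix_{K/H}\circ\Fix_H^{K/H}$ to $\Fix_K$ on $\Aa(K,(C_0(G),\rt))$. Whiskering this with $\CP_K$ on the right gives a natural isomorphism from $\Fix_{K/H}\circ\Fix_H^{K/H}\circ\CP_K$ to $\Fix_K\circ\CP_K$ with component at $(B,\delta)$ equal to $[\iota_{K,H}]$; by \cite[Proposition~5.5]{aHKRWproper-n} the target is $\Fix_K\circ\CP_K=\RCP_{G/K}$.

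Finally I would concatenate these two natural isomorphisms, which is legitimate precisely because the target of the first agrees on the nose with the source of the second. The composite is a natural isomorphism from $\RCP_{K/H}\circ\RCP_{G/H}^{K/H}$ to $\RCP_{G/K}$ whose component at $(B,\delta)$ is $[\iota_{K,H}]\circ[Z(B\rtimes_{\delta,r}(G/H),\hat\delta^{K/H},(j_G)_H)]$, exactly \eqref{Rieffbimodswithiota}; that each component is an isomorphism recovers, and is consistent with, the object-level statement of Theorem~\ref{cor-easyiis}. The only real obstacle is bookkeeping: one must verify that each functor lands in the category demanded by the result being invoked, so that \cite[Theorem~3.5]{aHKRWproper-n} and Theorem~\ref{thm-newfix} can be whiskered and then composed, and that the two identifications $\Fix_{K/H}\circ\RCP_{G/H}^{K/H}=\Fix_{K/H}\circ\Fix_H^{K/H}\circ\CP_K$ and $\Fix_K\circ\CP_K=\RCP_{G/K}$ hold exactly so that the two transformations share a common middle functor. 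Both are supplied by Lemma~\ref{lem-functorscoincide} and \cite[Proposition~5.5]{aHKRWproper-n}, so no further estimates or module computations are needed.
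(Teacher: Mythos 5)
Your proposal is correct and follows essentially the same route as the paper's own proof: apply \cite[Theorem~3.5]{aHKRWproper-n} with $(T,G)=(G/H,K/H)$ and whisker by $\RCP_{G/H}^{K/H}=\Fix_H^{K/H}\circ\CP_K$ (Lemma~\ref{lem-functorscoincide}), then whisker the natural isomorphism $\iota_{K,H}$ of Theorem~\ref{thm-newfix} by $\CP_K$ and identify $\Fix_K\circ\CP_K=\RCP_{G/K}$ via \cite[Proposition~5.5]{aHKRWproper-n}, and finally compose. The only cosmetic difference is the order in which you invoke Lemma~\ref{lem-functorscoincide} relative to the whiskering, which changes nothing.
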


\begin{proof}
With  $(T,G)=(G/H,K/H)$  
and $(A,\alpha,\phi)=(B\rtimes_{\delta,r}(G/H),\hat\delta^{K/H}, (j_G)_H)$,
Theorem~3.5 of \cite{aHKRWproper-n} 
says that the Rieffel bimodules $Z(B\rtimes_{\delta,r} 
(G/H),\hat\delta^{K/H}, (j_G)_H)$ give  natural isomorphisms between
\begin{gather*}
 \RCP_{K/H}:\Aa(K/H,(C_0(G/H),\rt))\to \CC,\ \text{ and}\\
  \Fix_{K/H}:\Aa(K/H,(C_0(G/H),\rt))\to \CC.
\end{gather*}
Thus the Rieffel bimodules also give natural isomorphisms between
\[\RCP_{K/H}\circ\Fix_H^{K/H}\circ\CP_K\quad\text{and}\quad 
\Fix_{K/H}\circ\Fix_H^{K/H}\circ \CP_K\]
from $\Aca^\nor(G)$ to $\CC$.
But $\Fix_H^{K/H}\circ \CP_K=\RCP_{G/H}^{K/H}$ by  
Lemma~\ref{lem-functorscoincide}, and Theorem~\ref{thm-newfix} says that $\iota_{K,H}$ is a natural isomorphism of $ \Fix_{K/H}\circ\Fix_H^{K/H}$ 
onto $\Fix_K$, so the composition  \eqref{Rieffbimodswithiota} is a natural 
isomorphism from
\[\RCP_{K/H}\circ\RCP_{G/H}^{K/H}\quad\text{to}\quad \Fix_K\circ\CP_K
\]
Another application of \cite[Proposition~5.5]{aHKRWproper-n} says that   $\Fix_K\circ\CP_K =\RCP_{G/K}$, and hence we have proved the theorem.
\end{proof}

\section{Induction of representations between crossed products by homogeneous spaces}

Suppose that $\delta$ is a normal coaction of a locally compact group $G$ on a $C^*$-algebra $B$, and $H$, $K$ are closed subgroups of $G$ with $H\subset K$. We want to define an induction process $\Ind_{G/K}^{G/H}$ from $\Rep\big( B\rtimes_{\delta,r}(G/K)\big)$ to $\Rep \big(B\rtimes_{\delta,r}(G/H)\big)$ which has ``the usual properties one would expect,'' and fortunately we have previously described what we think this means for actions. So, dualising the criteria described in the introduction to \cite{aHKRW-JFA}, we want:
\begin{enumerate}
\item an \emph{imprimitivity theorem} which characterises induced 
representations;

\smallskip
\item \emph{regularity}: the representations induced from the trivial quotient $\{G/G\}$ 
are the regular representations; and 

\smallskip
\item  \emph{induction-in-stages}: for $H\subset K\subset L$, we have $\Ind_{G/K}^{G/H}\circ \Ind_{G/L}^{G/K}=\Ind_{G/L}^{G/H}$.
\end{enumerate}

When all the subgroups are normal, the process constructed by Mansfield in \cite{man} has these properties: Mansfield himself proved an imprimitivity theorem \cite[Theorem~28]{man} and that the representations induced from $G/G$ are the regular representations \cite[Proposition~21]{man}, and induction-in-stages was later proved in \cite[Corollary~4.2]{kqrold}. (There are potentially confusing switches of hypotheses between \cite{man} and \cite{kqrold}, but since one can pass from normal to reduced coactions and back again \cite{quigg-fr}, the results all hold for the reduced coactions used in \cite{man} and the normal coactions used in \cite{kqrold}. The analogous results for maximal coactions were only recently obtained in \cite{aHKRW-JFA}.)

Our goal here is to do what we can for non-normal subgroups. We have not been able to completely eliminate normality hypotheses, but we get a satisfactory theory for pairs of subgroups $H$, $K$ such that $H$ is normal in $K$. For such a pair, Theorem~\ref{newMe} gives a Morita equivalence $Z(B\rtimes_{\delta,r} (G/H),\hat\delta^{K/H}, (j_G)_H)$ between $(B\rtimes_{\delta,r}(G/H))\rtimes_{\widehat\delta^{K/H}}(K/H)$ and the fixed-point algebra $\Fix_{K/H}(B\rtimes_{\delta,r}(G/H),\widehat\delta^{K/H},(j_G)_H)$. Since $B\rtimes_{\delta,r}(G/H)$ is itself the fixed-point algebra for the system $(B\rtimes_\delta G,\widehat\delta|_H,j_G)$ and $H$ is normal in $K$, we can use the isomorphism
\[
\iota_{K,H}:\Fix_{K/H}\circ\Fix_{H}^{K/H}(B\rtimes_\delta G,\widehat\delta|_K,j_G)\to \Fix_{K}(B\rtimes_\delta G,\widehat\delta|_K,j_G)=B\rtimes_{\delta,r}(G/K)
\]
of Theorem~\ref{thm-newfix} to view $Z(B\rtimes_{\delta,r} (G/H),\widehat\delta^{K/H}, (j_G)_H)$ as a right Hilbert $(B\rtimes_{\delta,r}(G/K))$-module, and throw away the left action of $K/H$ to obtain a right-Hilbert $(B\rtimes_{\delta,r}(G/H))$--$(B\rtimes_{\delta,r}(G/K))$ bimodule $Z_{G/K}^{G/H}(B,\delta)$; formally,  $Z_{G/K}^{G/H}(B,\delta)$ is the composition
\begin{equation*}
    \xymatrix{
 B\rtimes_{\delta,r}(G/H)
      \ar[rrr]^-{Z(B\rtimes_{\delta,r} (G/H),\widehat\delta^{K/H})}
      &&&
     \Fix_{K/H}(B\rtimes_{\delta,r} (G/H),\widehat\delta^{K/H})
     \ar[r]^-{\iota_{K,H}}
      &B\rtimes_{\delta,r}(G/K).
          }
  \end{equation*}
We now define
\[
\Ind_{G/K}^{G/H}:=Z_{G/K}^{G/H}(B,\delta)\dashind.
\]

\begin{remark}
Because this induction process is defined using Rieffel induction as in \cite[\S2.4]{tfb}, it automatically has lots of nice properties: it is functorial on representations and intertwining operators; it preserves weak containment, and hence is well-defined on ideals \cite[Corollary~2.73]{tfb}; and it is continuous \cite[Corollary~3.35]{tfb}. Further, the naturality of Theorem~\ref{newMe} implies that $\Ind_{G/K}^{G/H}$ will be compatible with the Rieffel-induction processes arising from morphisms $[W,\Delta]$ in the category $\Aca^\nor(G)$. 
\end{remark}

Adapting the argument of \cite[Proposition~2.1]{aHKRW-JOT} to reduced crossed products  gives an imprimitivity theorem for this process:

\begin{cor}  Suppose that $H$ and $K$ are closed subgroups of a locally compact group $G$ such that $H$ is normal in $K$, that $\delta$ is a normal coaction of $G$ on $B$, and that $\pi$ is a nondegenerate representation of $B\rtimes_{\delta,r}(G/H)$ on $\H_\pi$.  Then there is a nondegenerate representation $\tau$ of $B\rtimes_{\delta,r}(G/K)$ such that $\pi$ is unitarily equivalent to $\Ind_{B\rtimes_{\delta,r}(G/K)}^{B\rtimes_{\delta,r}(G/H)}\tau$ if and only if there exists a unitary representation $U$ of $K/H$ on $\H_\pi$ such that $(\pi,U)$ is covariant for $\delta^{K/H}$ and $\pi\rtimes U$ factors through the reduced crossed product $(B\rtimes_{\delta, r}(G/H))\rtimes_{\delta^{K/H},r}(K/H)$. (The last requirement is automatic if $K/H$ is amenable).
\end{cor}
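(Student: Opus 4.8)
The plan is to recognise $\Ind_{G/K}^{G/H}$ as Rieffel induction through an imprimitivity bimodule, followed by restriction of the resulting covariant representation to a subalgebra. Write $A:=B\rtimes_{\delta,r}(G/H)$ and let $\alpha:=\delta^{K/H}$ be the action of $K/H$ on $A$ furnished by Lemma~\ref{lem-functorscoincide}. By Rieffel's theorem, $Z(A,\alpha,(j_G)_H)$ is an imprimitivity bimodule implementing a Morita equivalence between $A\rtimes_{\alpha,r}(K/H)=(B\rtimes_{\delta,r}(G/H))\rtimes_{\delta^{K/H},r}(K/H)$ and $\Fix_{K/H}(A,\alpha,(j_G)_H)$, and the isomorphism $\iota_{K,H}$ of Theorem~\ref{thm-newfix} identifies this fixed-point algebra with $B\rtimes_{\delta,r}(G/K)$. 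Unwinding the definition, the induction bimodule $Z_{G/K}^{G/H}(B,\delta)$ is precisely $Z(A,\alpha,(j_G)_H)$ with its right coefficient algebra identified with $B\rtimes_{\delta,r}(G/K)$ through $\iota_{K,H}$ and with the left action of $A\rtimes_{\alpha,r}(K/H)$ restricted to the canonical copy of $A$ in its multiplier algebra.

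First I would pin down the left-action bookkeeping: as in the proof of Theorem~\ref{cor-easyiis}, the left action of $A\rtimes_{\alpha,r}(K/H)$ on $Z(A,\alpha,(j_G)_H)$ restricts on $A$ to the action by left multiplication, which is exactly the left action defining $Z_{G/K}^{G/H}(B,\delta)$. Hence, for a nondegenerate representation $\tau$ of $B\rtimes_{\delta,r}(G/K)$, if we form the nondegenerate representation $L:=Z(A,\alpha,(j_G)_H)\dashind(\tau\circ\iota_{K,H})$ of $A\rtimes_{\alpha,r}(K/H)$ and write $L=\pi'\rtimes U$ for its covariant pair $(\pi',U)$ of $(A,K/H,\alpha)$, then $\Ind_{G/K}^{G/H}\tau=Z_{G/K}^{G/H}(B,\delta)\dashind\tau$ is the restriction $L|_A=\pi'$ of $L$ to $A$. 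This settles the forward implication: if $\pi\cong\Ind_{G/K}^{G/H}\tau=\pi'$, transport $U$ across the unitary equivalence to obtain a unitary representation of $K/H$ making $(\pi,U)$ covariant for $\delta^{K/H}$, with $\pi\rtimes U\cong L$ factoring through the reduced crossed product.

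For the converse I would reverse this using that $Z(A,\alpha,(j_G)_H)$ is an imprimitivity bimodule, so $Z(A,\alpha,(j_G)_H)\dashind$ is a bijection from (equivalence classes of) nondegenerate representations of $B\rtimes_{\delta,r}(G/K)\cong\Fix_{K/H}(A,\alpha,(j_G)_H)$ onto those of $A\rtimes_{\alpha,r}(K/H)$. Given $U$ with $(\pi,U)$ covariant for $\delta^{K/H}$ and $\pi\rtimes U$ factoring through the reduced crossed product, set $L:=\pi\rtimes U$, a nondegenerate representation of $A\rtimes_{\alpha,r}(K/H)$. Surjectivity of the bijection produces a nondegenerate representation $\tau$ of $B\rtimes_{\delta,r}(G/K)$ with $Z(A,\alpha,(j_G)_H)\dashind(\tau\circ\iota_{K,H})\cong L$, and restricting to $A$ gives $\Ind_{G/K}^{G/H}\tau\cong L|_A=\pi$, as required. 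The parenthetical follows because for amenable $K/H$ the full and reduced crossed products coincide, so the factoring condition is automatic.

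The main obstacle, indeed the only step that is not purely formal, is the first one: verifying that ``throwing away the left action of $K/H$'' in the construction of $Z_{G/K}^{G/H}(B,\delta)$ leaves exactly the restriction to $A$ of the imprimitivity-bimodule left action, and that the unitary $U$ read off from the $K/H$-part of $L$ coincides with the $U$ in the covariance condition; equivalently, that passing between a covariant pair $(\pi,U)$ and its integrated form $\pi\rtimes U$ is compatible with the restriction-to-$A$ operation. This is precisely where we adapt \cite[Proposition~2.1]{aHKRW-JOT} to the reduced crossed products. Once it is established, the remaining assertions are formal consequences of the Morita equivalence and the standard correspondence between representations of a reduced crossed product and covariant pairs that factor through it.
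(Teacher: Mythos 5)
Your proof is correct and follows essentially the same route as the paper: the paper justifies this corollary precisely by ``adapting the argument of \cite[Proposition~2.1]{aHKRW-JOT} to reduced crossed products,'' which is exactly what you have spelled out --- identifying $\Ind_{G/K}^{G/H}\tau$ as the restriction to $B\rtimes_{\delta,r}(G/H)$ of the representation of $(B\rtimes_{\delta,r}(G/H))\rtimes_{\delta^{K/H},r}(K/H)$ induced over the imprimitivity bimodule $Z$, and then invoking the bijectivity of imprimitivity-bimodule induction together with the standard correspondence between representations of the reduced crossed product and covariant pairs whose integrated forms factor through it. The bookkeeping point you isolate (that the left action on $Z_{G/K}^{G/H}(B,\delta)$ is the restriction of the imprimitivity-bimodule action to the canonical copy of $B\rtimes_{\delta,r}(G/H)$ in the multiplier algebra) is indeed the only non-formal step, and your verification matches the description of that left action given in the proof of Theorem~\ref{cor-easyiis}.
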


Next, suppose that $\pi$ is a representation of $B=B\rtimes_{\delta|}(G/G)$. Since $\delta$ is normal, the reduction $\delta^r$ is a reduced coaction of $G$ on $B$ (as opposed to a quotient of $B$), and $(B\rtimes_\delta G,j_B,j_G)$ is also the crossed product by the reduction $\delta^r$ (by Propositions~3.3 and~2.8 of \cite{quigg-fr} or Theorem~4.1 of \cite{rae}). Thus it follows from the discusssion preceding Theorem~6.2 of \cite{kqrproper} that $Z_{G/G}^G(B,\delta)$ is the bimodule $\overline{\DD}$ used in \cite{man}, and Proposition~21 of \cite{man} implies that $\Ind_{G/G}^G\pi$ is equivalent to the regular representation
$((\pi\otimes\lambda)\circ\delta,1\otimes M)$ on $\H_\pi\otimes L^2(G)$.

So our induction process has properties (1) and (2). Property (3) is harder.

\begin{thm}
Suppose  that $\delta$ is a normal coaction of $G$ on $B$, and that $H$, $K$ and $L$ are closed subgroups of $G$ such that $H\subset K\subset L$ and both $H$ and $K$ are normal in  $L$. Then for every representation $\pi$ of $B\rtimes_{\delta,r}(G/L)$,  $\Ind_{G/K}^{G/H}\big(\Ind_{G/L}^{G/K}\pi\big)$ is unitarily equivalent to $\Ind_{G/L}^{G/H}\pi$.
\end{thm}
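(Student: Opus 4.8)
The plan is to reduce induction-in-stages to a single isomorphism of right-Hilbert bimodules. Since $\Ind_{G/K}^{G/H}=Z_{G/K}^{G/H}(B,\delta)\dashind$ and induction via a bimodule composes according to the internal tensor product, the equivalence $\Ind_{G/K}^{G/H}\circ\Ind_{G/L}^{G/K}\pi\cong\Ind_{G/L}^{G/H}\pi$ follows once we produce an isomorphism
\[
Z_{G/K}^{G/H}(B,\delta)\otimes_{B\rtimes_{\delta,r}(G/K)}Z_{G/L}^{G/K}(B,\delta)\cong Z_{G/L}^{G/H}(B,\delta)
\]
of right-Hilbert $\big(B\rtimes_{\delta,r}(G/H)\big)$\,--\,$\big(B\rtimes_{\delta,r}(G/L)\big)$ bimodules. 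First I would record that the hypotheses give everything we need: since $H$ and $K$ are normal in $L$ and $H\subset K$, we have $H$ normal in $K$, $K/H$ normal in $L/H$, and $(L/H)/(K/H)\cong L/K$, so each of the three induction bimodules is defined, and there is an action $\widehat\delta^{L/H}$ of $L/H$ on $B\rtimes_{\delta,r}(G/H)$ restricting to $\widehat\delta^{K/H}$ on $K/H$.

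The engine of the proof is Theorem~\ref{cor-easyiis}, applied to the object $(B\rtimes_{\delta,r}(G/H),\widehat\delta^{L/H},(j_G)_H)$ of $\Aa(L/H,(C_0(G/H),\rt))$ with the normal subgroup $K/H$. Read as a statement about bimodules whose middle factor already carries the restricted (rather than crossed-product) left action — which is exactly the shape of our induction bimodules — it decomposes Rieffel's bimodule $Z(B\rtimes_{\delta,r}(G/H),\widehat\delta^{L/H})$ as the composite through $\Fix_{K/H}(B\rtimes_{\delta,r}(G/H))$, with the iterated fixed-point algebra identified via $\iota_{L/H,K/H}$ of Theorem~\ref{thm-newfix}. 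I would then precompose the resulting identity with the canonical inclusion $B\rtimes_{\delta,r}(G/H)\hookrightarrow M\big((B\rtimes_{\delta,r}(G/H))\rtimes_{\widehat\delta^{K/H},r}(K/H)\big)$; on the ``bottom'' path the two successive canonical inclusions collapse to the single canonical inclusion into $M\big((B\rtimes_{\delta,r}(G/H))\rtimes_{\widehat\delta^{L/H},r}(L/H)\big)$, turning it into exactly $Z_{G/L}^{G/H}$, while on the ``top'' path the first factor becomes $Z_{G/K}^{G/H}$.

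It remains to recognise the second factor of the top path as $Z_{G/L}^{G/K}$. For this I would verify that $\iota_{K,H}\colon\Fix_{K/H}(B\rtimes_{\delta,r}(G/H))\to B\rtimes_{\delta,r}(G/K)$ is equivariant for the quotient actions $(\widehat\delta^{L/H})^{L/K}$ and $\widehat\delta^{L/K}$ and compatible with the $(j_G)$-embeddings — immediate because all of these are restrictions of one and the same inclusion into $M(B\rtimes_\delta G)$ — and then invoke the naturality of Rieffel's Morita equivalence (\cite[Theorem~3.5]{aHKRWproper-n}, in the equivariant form of Theorem~\ref{thm-equivariantRieffel}) to transport $Z(\Fix_{K/H}(B\rtimes_{\delta,r}(G/H)),(\widehat\delta^{L/H})^{L/K})$ across $\iota_{K,H}$ onto $Z(B\rtimes_{\delta,r}(G/K),\widehat\delta^{L/K})$, using $\iota_{K,H}$ to switch the balancing algebra of the internal tensor product from $\Fix_{K/H}(B\rtimes_{\delta,r}(G/H))$ to $B\rtimes_{\delta,r}(G/K)$. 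The last point to check is the coherence of the identification maps, namely that $\iota_{L/H,K/H}$ followed by $\iota_{L,H}$ agrees with the $\iota_{K,H}$-induced map followed by $\iota_{L,K}$; this holds because every one of these isomorphisms is the restriction of the canonical inclusion into $M(B\rtimes_\delta G)$, so both composites realise the same inclusion of $\Fix_{L/K}(\Fix_{K/H}(B\rtimes_{\delta,r}(G/H)))$ onto $B\rtimes_{\delta,r}(G/L)$.

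The main obstacle I anticipate is precisely this bookkeeping: keeping straight which left actions are full crossed-product actions and which have been restricted via a canonical inclusion, and checking the equivariance and coherence of the chain of $\iota$-isomorphisms of Theorem~\ref{thm-newfix}. The conceptual content is carried entirely by Theorem~\ref{cor-easyiis}; the work lies in massaging its crossed-product left actions into the restricted left actions that define the induction bimodules, and in confirming that the several generalised fixed-point algebras, sitting as they do inside the single multiplier algebra $M(B\rtimes_\delta G)$, are matched up consistently.
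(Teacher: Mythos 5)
Your proposal is correct and follows essentially the same route as the paper: both reduce the statement to the bimodule isomorphism $Z_{G/K}^{G/H}\otimes Z_{G/L}^{G/K}\cong Z_{G/L}^{G/H}$, both drive it by applying Theorem~\ref{cor-easyiis} to $(B\rtimes_{\delta,r}(G/H),\widehat\delta^{L/H},(j_G)_H)$ with normal subgroup $K/H$ (with the crossed-product left actions restricted along the canonical inclusions), and both finish by checking that the equivariant isomorphism $\iota_{K,H}$ transports the Rieffel bimodule onto $Z(B\rtimes_{\delta,r}(G/K),\widehat\delta^{L/K})$ and that the chain of $\iota$-maps is coherent because all are (extensions of) restrictions of the single inclusion into $M(B\rtimes_\delta G)$. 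The bookkeeping you flag as the main obstacle is exactly where the paper spends its effort, verifying the bottom-right square via strictly continuous extensions of the inclusion maps.
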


\begin{proof}
We will prove the stronger statement that 
\begin{equation}\label{iisiso}
Z_{G/K}^{G/H}(B,\delta)\otimes_{B\rtimes(G/K)} Z_{G/L}^{G/K}(B,\delta)\cong Z_{G/L}^{G/H}(B,\delta)
\end{equation}
as right-Hilbert $(B\rtimes_{\delta,r}(G/H))$--$(B\rtimes_{\delta,r}(G/L))$ bimodules.

To make our calculations less cluttered, we write $(C,\beta)$ for $(B\rtimes_\delta G,\widehat\delta)$, and write, for example, $\Fix_HC$ for $\Fix(C,\beta_H,j_G)$. Now we apply Theorem~\ref{cor-easyiis} with $T=G$, $(G,N)=(L/H,K/H)$ and $(A,\alpha,\phi)=(\Fix_HC,\beta^{L/H},j_G)$, obtaining the following commutative diagram (in which we have omitted the crossed products on the left because they are not relevant for our purposes):
\begin{equation*}
    \xymatrix{
      \Fix_HC
      \ar[rrr]^{Z(\Fix_HC,(\beta^{L/H})|_{K/H})}
      \ar[dd]_{=}
      &&&
     \Fix_{K/H}(\Fix_HC,\beta^{L/H}|_{K/H})
      \ar[d]^{Z(\Fix_{K/H}(\Fix_HC,\beta^{L/H}|_{K/H}))}
      \\
      &&&\Fix_{L/K}\big(\Fix_{K/H}(\Fix_HC,\beta^{L/H}|_{K/H}),(\beta^{L/H})^{(L/H)/(K/H)}\big)
     \ar[d]^{\iota_{L/H,K/H}}\\
     \Fix_HC
      \ar[rrr]^{Z(\Fix_HC,\beta^{L/H})}
      &&&
      \Fix_{L/H}(\Fix_HC,\beta^{L/H})
    }
  \end{equation*}
To get a diagram involving the modules $Z_{G/K}^{G/H}(B,\delta)$ which were used to define our induction process, we need to add lots of ``fix--fix'' isomorphisms. This gives us a diagram of the following shape:
\begin{equation*}
    \xymatrix{
      \Fix_HC
      \ar[rr]^{Z(\Fix_HC)}
      \ar[dd]_{=}
      &&
     \Fix_{K/H}\Fix_HC
      \ar[d]_{Z(\Fix_{K/H}\Fix_HC)}\ar[r]^{\iota_{K,H}}
      &\Fix_KC\ar[d]^{Z(\Fix_KC,\beta^{L/K})}
      \\
      &&\Fix_{(L/H)/(K/H)}\Fix_{K/H}\Fix_HC
           \ar[d]_{\iota_{L/H,K/H}}\ar[r]^{\qquad\quad\theta}
           &\Fix_{L/K}\Fix_KC\ar[d]^{\iota_{L,K}}\\
     \Fix_HC
      \ar[rr]^{Z(\Fix_HC)}
      &&
      \Fix_{L/H}\Fix_HC\ar[r]^{\iota_{L,H}}&\Fix_{L}C=B\rtimes (G/L)
    }
  \end{equation*}
Notice that the compositions along the top, right and bottom are the three bimodules $Z_{G/K}^{G/H}(B,\delta)$, $Z_{G/L}^{G/K}(B,\delta)$ and  $Z_{G/L}^{G/H}(B,\delta)$, so to prove \eqref{iisiso}, it will suffice to prove that this diagram commutes in $\CC$. Theorem~\ref{cor-easyiis} implies that the left-hand rectangle commutes.
The square in the top-right comes from the isomorphism $\iota_{K,H}$ of $\Fix_{K/H}\Fix_HC$ onto $\Fix_KC$, which carries the action $(\beta^{L/H})^{(L/H)/(L/K)}$ of $L/K=(L/H)/(L/K)$ into $\beta^{L/K}$, and hence induces an isomorphism $\theta$ of fixed-point algebras, which appears in the middle row. So the top right-hand square commutes. It remains to show that the bottom right-hand square commutes, and this will require us to look closely at the construction of the maps.

First of all, the top arrow $\theta$ is induced by the isomorphism $\iota_{K,H}$, and since the fixed-point algebras are by definition subalgebras of $M(\Fix_{K/H}\Fix_HC)$ and $M(\Fix_KC)$, $\theta$ is simply the strictly continuous extension $\overline{\iota_{K,H}}$ to the multiplier algebra. The vertical arrow $\iota_{L,K}$ is the strictly continuous extension of the inclusion $\iota_K$ of $\Fix_KC$ in $M(C)$, which is nondegenerate by Corollary~\ref{fixKQR}. Thus \begin{equation}\label{ES}
\iota_{L,K}\circ\theta=\overline{\iota_{K}}\circ\overline{\iota_{K,H}}=\overline{\overline{\iota_K}\circ\iota_{K,H}}.
\end{equation}
For $m\in \Fix_{K/H}\Fix_HC$, $\iota_{K,H}(m)$ belongs to $\Fix_KC$, and hence $\overline{\iota_K}\circ\iota_{K,H}(m)=\iota_K(\iota_{K,H}(m))$; since $\iota_K$ is the inclusion, we have $\overline{\iota_K}\circ\iota_{K,H}=\iota_{K,H}=\overline{\iota_H}$.

On the other hand, $\iota_{L/H,L/K}$ is the strictly continuous extension of the inclusion $j$ of $\Fix_{K/H}\Fix_HC$ in $M(\Fix_HC)$, so
\begin{equation}\label{SE}
\iota_{L,H}\circ\iota_{L/H,L/K}=\overline{\iota_H}\circ\overline{j}=\overline{\overline{\iota_H}\circ j}.
\end{equation}
Since $j$ is an inclusion, $\overline{\iota_H}\circ j=\overline{\iota_H}$; since  we have already seen that $\overline{\iota_H}=\overline{\iota_K}\circ\iota_{K,H}$, Equations~\ref{ES} and~\ref{SE} imply that $\iota_{K,H}\circ\theta$ and $\iota_{L,H}\circ\iota_{L/H,L/K}$ are the strictly continuous extensions of the same homomorphism from $\Fix_{K/H}\Fix_HC$ to $M(C)$, and hence are equal. Thus the diagram commutes, as claimed, and this completes the proof. 
\end{proof}

\end{document}